\documentclass{conm-p-l}
\usepackage[cp1251]{inputenc}
\usepackage[russian,english]{babel}
\usepackage{amsmath,amsthm,amssymb,graphicx,color}
\usepackage[colorlinks, pdfauthor={Yu. Brezhnev},,pdfwindowui=false,
            bookmarksopen=false, % pdfpagemode=FullScreen (работает)
            bookmarks=false]{hyperref}
\usepackage[X2,T2A]{fontenc}
\newcommand{\е}{{\fontencoding{X2}\selectfont\cyryat}}
% Буква Ять строчная; используется в списке литературы
% после подключения X2, T2A можно не писать \russian \english
% работает автоматически

% Взято из AMS
%\def\nonbreakingspace{\unskip\nobreak\;\ignorespaces}
%\def~{\protect\nonbreakingspace}
% Работает только вся пара \def; все, что определено в форматах
% не трогается, проверял...

% Следующие команды не зависят ни от каких классов и пакетов
\def~{\unskip\nobreak\hspace{0.27778em}\ignorespaces}
\def\,{\ifmmode\mskip+1.5mu\else\kern+.08333em\fi\relax}
\def\!{\ifmmode\mskip-1.5mu\else\kern-.08333em\fi\relax}
% \tmspace - аббревиатура для text/math space
% ! \tmspace - это AMS' команда, ее нет в стандартном LaTeX2е
% см. D:\Program Files\MiKTeX\tex\latex\amsmath\amsmath.sty
% она у меня не используется. Это корректировка стандартной команды \,
% \providecommand{\tmspace}[3]{\ifmmode\mskip#1#2\else\kern#1#3\fi\relax}
% \renewcommand{\,}{\tmspace+\thinmuskip{.1667em}}
% Кнутовская команда \def\,{\kern 0.16667em} некорректно работала
% в мат. моде при \scriptstyle, но работает при \displaystyle
\newdimen{\FontSize} \delimiterfactor=990

\def\ds{\displaystyle}  \def\sss{\scriptscriptstyle}
\def\ie{i.\,\,e.}  \def\eg{e.\,\,g.}
\def\re{{\,\mathrm{e}}} \def\ri{{\mathrm{i}}}

\def\wpp{\wp\hbox{\smaller[1]$'$}}

\def\omp{\omega\hbox{\smaller[1]$'$}}

\def\DEF{\mathrel{\vcenter{\hbox{$:$}}{=}}}
\def\FED{\mathrel{{=}\vcenter{\hbox{$:$}}}}
\def\Dtheta{{\ds\theta'\nobreak{}_{\mskip-7.5mu1}}\mskip-1.5mu\relax}

\def\smin{\mathrel{\vcenter{\hbox{\scalebox{0.7}[1]{$\scriptstyle-$}}}}}
\def\hence{=\kern-0.5em\Rightarrow}
\def\?{\textrm{\protect\footnotesize$\RED\mathchar"446$}}
\def\vphi{\mskip0mu\raisebox{-0.54ex}{\scalebox{0.6}[1]{\hbox{$-$}}}
 \mkern-7.76mu\raise0.52ex\hbox{$\varphi$}}
\def\vpsi{\mskip0.4mu\raisebox{-0.54ex}{\scalebox{0.6}[1]{\hbox{$-$}}}
 \mkern-8.82mu\raise0.52ex\hbox{$\psi$}{}}

\def\russian{\selectlanguage{russian}}
\def\english{\selectlanguage{english}}
\def\END{ \FontSize=10pt

\theoremstyle{plain}
\newtheorem{theorem}{Theorem}[section]
\newtheorem{lemma}[theorem]{Lemma}
\newtheorem{corollary}[theorem]{Corollary}
\newtheorem{proposition}[theorem]{Proposition}
\renewcommand{\qedsymbol}{$\blacksquare$}

\theoremstyle{definition}
\newtheorem{definition}[theorem]{Definition}
\newtheorem{example}{\sf Example}

\theoremstyle{remark}
\newtheorem{remark}{Remark}

%\numberwithin{equation}{section}
\allowdisplaybreaks[4]

%%%%%%%%%%%% Local Macro Definitions %%%%%%%%%%%%%%
\def\gk{\gamma_k^{}}
\def\N{\mathcal{N}}
\def\Nu{\mathcal{N}(\protect\dpotu)}
\def\NTheta{\mathcal{N}(\Theta)}
\def\II{{\boldsymbol{\mathrm{I\kern-0.15em I}}}}
\def\III{{\boldsymbol{\mathrm{I\kern-0.18em I\kern-0.18em I}}}}
\def\p{\mathcal{P}}
\def\G{{\boldsymbol{\mathfrak{G}}}}
\def\R{{\boldsymbol{\mathcal{R}}}}
\def\bL{{\widehat{\boldsymbol{L}}}}
\def\bA{{\,\,\widehat{\!\!\boldsymbol{A}}}}
\def\potU{\hbox{\smaller[2]$U$}}
\def\pot{\hbox{\smaller[2]%
\raise0.07em\hbox{$\{$}$U$\raise0.07em\hbox{$\}$}}}
\def\dpotU{\hbox{\smaller[2]%
\raise0.07em\hbox{$ [$}$U$\raise0.07em\hbox{$]$}}}
\def\dpotu{\hbox{\smaller[2]%
\raise0.07em\hbox{$[$}{\larger[2]$u$}\raise0.07em\hbox{$]$}}}
\def\dpotv{\hbox{\smaller[2]%
\raise0.07em\hbox{$[$}{\larger[2]$v$}\raise0.07em\hbox{$]$}}}
%%%%%%%%%%% End of Macro Definitions %%%%%%%%%%%%%%

\author[Yu.~Brezhnev]{Yurii V.~Brezhnev}

\title[Spectral/quadrature duality of finite-gap potentials]
{Spectral/quadrature duality:\\
Picard--Vessiot theory and finite-gap potentials}

\keywords{Schr\"odinger equation, finite-gap potentials,
Picard--Vessiot theory, quadratures, Liouvillian extensions, Abelian
integrals, theta-functions}

\date{\today}
\email{brezhnev@mail.ru}
\thanks{Research supported by the Federal Targeted Program
under contract 02.740.11.0238 and partially by \textsc{Royal
Society/NATO} and RFBR grant (00--01--00782).}

\begin{document}

\maketitle %\thispagestyle{empty}

\begin{abstract}
In the framework of differential Galois theory we treat the classical
spectral problem $\Psi''-u(x)\Psi=\lambda\Psi$ and its finite-gap
potentials as exactly solvable in quadratures by Picard--Vessiot
without involving  special functions; the ideology goes back to the
1919 works by J.~Drach. We show that duality between spectral and
quadrature approaches is realized through the Weierstrass permutation
theorem for a logarithmic Abelian integral. From this standpoint we
inspect known facts and obtain new ones: an important formula for  the
$\Psi$-function and $\Theta$-function extensions of Picard--Vessiot
fields. In particular, extensions by Jacobi's $\theta$-functions lead
to the (quadrature) algebraically integrable equations for the
$\theta$-functions themselves.
\end{abstract}

\tableofcontents
\newpage
\section{Introduction}

Initially  the method of finite-gap integration was developed in works
\cite{19,17,21, 55,37,7,8} as a periodic generalization of the
celebrated inverse scattering transform method (the soliton theory
\cite{newell}). In the very first papers on this topic
\cite{19,21,4,37,7} it has become clear that  analogs and
generalizations of the soliton potentials are to be smooth and real
functions $u(x)$ in the spectral problem defined by the Schr\"odinger
equation
\begin{equation}\label{1}
\Psi_\mathit{\!xx}-u(x)\,\Psi=\lambda\,\Psi\,,
\end{equation}
if the continuous spectrum of the problem consists of finitely many
`forbidden gaps'\footnote{If  parameter $\lambda$ belongs to such a
lacunae (it is a line segment on the real axe $\lambda$), then the
$\Psi$-function growths unboundedly as a function of $x$.} (lacunae,
bands, zones, intervals \cite{37,55}). This explains the widely used
terminology `finite-gap', abbreviated further as FG. Ensuing
development of the theory  went far beyond equation \eqref{1} and took
the algebro-geometric characterization \cite{29,24,25,5}. Riemann
surfaces and their theta-functions have become the main subject of
study \cite{4,5,29}. Appearance of these nontrivial objects is dictated
by the very nontrivial dependence of solution to Eq.~\eqref{1} upon
parameter $\lambda$ and the search for this dependence is a starting
and prime subject of the spectral \cite{37,17,21} and algebro-geometric
($\Theta$-function) \cite{24,5} approaches. These theories are referred
frequently to as the $\Theta$-function integration. Presently, one can
say that the intense study of equation \eqref{1} over the last decades
led to the fact that its FG-theory has been developed almost
exhaustively. In this connection, it is, perhaps, not without interest
to consider one more view on integration of the problem \eqref{1}.

\subsection{Motivation\label{motiv}}
In the early 1980s some authors revealed the two old papers by J.~Drach
\cite{49_1,49_2} wherein equation \eqref{1} was integrated `directly'
and main results of the theory  were presented in extremely condensed
form. Although these works had subsequently received some mention in
the literature \cite{23,9,newell}  with a special emphasis to the
FG-theory (\cite[pp.~84--85]{38}; written by Matveev), some surprising
facet is the fact that more detailed exposition of Drach's ideology has
not been presented in the modern literature hitherto. The need for such
exposition is apparent when taken into account that the
works\footnote{After these works Drach had not longer returned to
integration of linear ODEs.} \cite{49_1,49_2} themselves contain no any
explanations or proofs. In this connection, it is pertinent to make up
for this gap and sketch an appropriate theory.

The original approach by Drach is to integrate \eqref{1} as an ODE.
Indeed, equation \eqref{1} is primarily a differential equation in
variable $x$ even though we consider it in the algebro-geometric
\cite{24,32} or spectral context \cite{21,37} in which  the $\Psi$ is
viewed, primarily, as function of $\lambda$. Anyway, in so far as the
$\Psi(x;\lambda)$ is a function of two variables, complete theory must
explain this duality and therefore provide conversion between  `$x$-'
and `$\lambda$-formulae'. On the other hand,  integration of linear
ODEs is the subject matter of the old and well developed differential
extension of the algebraic Galois theory which is variously known as
the Picard--Vessiot theory and sometimes as the Lie--Kolchin theory.
The main references in this topic are monographs
\cite{singerput,kapl,berkovich} and classical works
\cite{singer,kolchin}. Strange though it may seem, the explicit
discussion of a direct linkage between this theory and the modern
aspects of integrable models associated to  Eq.~\eqref{1} appeared
comparatively recently \cite{ispanec,A2}.

Correlation between the Picard--Vessiot theory and $\Theta$-function
methods brings up the following question: what is the relation between
these two techniques upon applying them to the linear spectral
problems, say, \eqref{1}? The formal answer (commonly accepted (?))
might be the following: the $\Theta$-series (see next section for
definition) is a \textit{special function} solving \eqref{1}. Indeed,
an integration theory by Picard--Vessiot begins with the precise
definition of a used function class \cite{singerput,kolchin}. By this
is meant that, in the strict sense, without definition of the
integrability domain, any scheme based on use of the formally
postulated $\Theta$-series should indeed be considered as integration
in terms of special functions. We shall show that this is not the case.
For example, the $\Theta$-function integration of Hamiltonian
finite-dimensional nonlinear dynamical systems $\dot q=V(q)$ is know to
be a manifestation of their algebraically invariant Liouvillian
integrability. On the other hand, integrability of such systems is very
well known to be related \cite{dik4} to their representability through
certain linear equations (Lax pairs):
$$
\big\{\boldsymbol{L}(q)\Psi=\lambda\,\Psi\,,\quad \dot\Psi=
\boldsymbol{A}(q)\,\Psi\big\}
\qquad\Rightarrow\qquad
\dot{\boldsymbol{L}}=\big[\boldsymbol{A},\boldsymbol{L}\big]
\quad\Leftrightarrow\quad \dot q=V(q)\,.
$$
The `nonlinear Hamiltonian' integrability is certain to entail the
`linear Picard--Vessiot' one, if only because the logarithmic
derivative $\dot\Psi/\Psi$ is a rational function of dynamical
variables. It should be noted here that the development of this
ideology can serve as a basis for independent concept of integration of
spectral problems at all. However, we do not touch here on such
Hamiltonian systems and algebro-geometric (FG) integration of partial
differential equations (PDEs). We focus only on a linear problem as
such, so  our main intention with this work is to show that the scheme
of FG-integration of the linear spectral equation \eqref{1} should be
separated into the two parts:
\begin{enumerate}
\item The \textit{invariant property} of equation \eqref{1} to be
integrable, \ie,  Lie--Kolchin's solvability of corresponding
differential Galois group \cite{kolchin,singerput}.
\item \textit{Representation} of differential fields and solutions in
terms of those functions which of inevitably appear in the theory.
These are the $\Theta$-series. In some particular cases the series
themselves satisfy the \textit{algebraically} integrable
\textit{Hamiltonian} ODEs.
\end{enumerate}

By the invariance, here and in the subsequent discussion, we shall
informally mean an independence of representations by theta-functions.
Notwithstanding the fact that representation of solutions requires
introducing the highly nontrivial transcendental $\Theta$-objects, the
integrability mechanism itself is very simple. It coincides in effect
with an elementary solvability in closed
form\footnote{\label{kov}J.~Kovacic, in his famous work \cite{kov} on
p.~4, notes: `by a ``closed-form'' solution we mean, roughly, one that
can be written down by a first-year calculus student'. As we shall see,
this `definition' is completely compatible with the transcendental
$\Theta$-function characterization of the FG-integration of
Eq.~\eqref{1}. Rephrasing, there is a closed form solution
(Theorem~\ref{T3}) that can be verified by a direct substitution into
\eqref{1} followed by use of the first-year student calculus: algebra
and differentiation.} and thereby trivializes understanding of the
major portion of the FG-theory. In other words, our main purpose is to
bring the Picard--Vessiot aspects---fields, their extensions,
differential Galois group, quadrature solvability, etc---into the
foreground and, subsequently, to get representations for them in the
FG-terms---spectral curves, variables $\gk$, $\Theta$'s, etc. The
latter objects, to the best of our knowledge, have not received mention
in the contemporary works on the Picard--Vessiot integration of linear
ODEs. See, for example, work \cite[first sentence in
\S\,3(b)]{previato}, monographs \cite{singerput, ispanec, berkovich},
and quite voluminous references therein. Partially, some fragments of
the theory, in a context of the elliptic Lam\'e potentials
$u=A\,\wp(x)$, can be found in book \cite{ispanec} and  work
\cite{etingof}. We note also that FG-potentials are the Abelian
functions \cite{4} and Abelian extensions of differential fields were
already briefly considered by Kolchin himself \cite{kolchin2}. There is
no escape from the mentioning nice applications of the Picard--Vessiot
theory of Eq.~\eqref{1} to the supersymmetric quantum mechanics. They
appeared comparatively recently and this theme is the subject matter of
recent works \cite{A1,A2}.

%The successive derivation of the $\Theta$-formulae from new
%invariant $\Psi$-formula (formula \eqref{final}) is described in the
%work \cite{br3}, but
%For this reason, we minimize
%intersection between present work and  work  \cite{br3}.
% and avoid,
%where possible, the formal use of apparatus of multi-dimensional
%$\Theta$-functions.
%They appear where they are imperative.

\subsection{Outline of the work}
Section~\ref{2} contains the background material: a sketch of the
classical Burchnall--Chaundy theory of commuting operators and its
modern formulation in the language of theta-functions.

In Sect.~\ref{3} we briefly recall the needed facts from the
Picard--Vessiot theory, introduce the base differential fields
(Novikov's fields), and motivate their hyperelliptic extensions. Then
we describe a structure of the differential Galois group for
FG-potentials.

Section~\ref{4} is devoted to the quadrature (Drach) characterization
of the FG-integrability and an explanation as to how the known
Weierstrass theorem on an Abelian logarithmic integral performs the
transition and difference between quadrature and spectral mode of
getting formulae for the $\Psi$-function.

In Sect.~\ref{5} we first recall that the theta-function formulae can
be derived from quadratures ones \cite{br3} and then show the necessity
to represent  the previous base objects in terms of theta-functions
and, in particular, to introduce an important object---the
1-dimensional section of the theta-function argument with a free
parameter. Owing to some differential properties of theta-functions the
theory acquires very effective form in those cases when jacobians are
reducible to a product of elliptic curves.

In Sect.~\ref{6} we completely pass to the theta-function
representations and give an appropriate formulation to the
FG-Picard--Vessiot integrability of Eq.~\eqref{1}. This provides a nice
analogy with solubilities in the simplest integrability domains like
field $\mathbb{C}$.

Section~\ref{7} explains how the theta-function reformulation of
Picard--Vessiot integrability transforms into the differential
closedness of the theta-functions themselves.  This also gives a new
treatment to the spectral parameter and a relationship of this
treatment to the closedness and linearity of some of defining
equations. We expound results at greater length for the cases when
multi-dimensional $\Theta$ reduces to the 1-dimensional Jacobian
$\theta$'s. By way of illustration we exhibit a simplest $g=2$
non-elliptic potential.

Differential properties of $\theta$-functions described in previous
section allows us to take these as a starting point for definition of
the functions themselves. It turns out that such a view leads again to
a Liouvillian extension but the latter is accompanied by introducing a
meromorphic elliptic integral and brings up some questions about
differential structures of the multi-dimensional $\Theta$. All this is
expounded in Sect.~\ref{8}.

In Sect.~\ref{9} we exhibit some counterexamples fitting no to the
canonical FG-theta-theory but being certainly integrable \`a la
Picard--Vessiot with the solvable Galois groups. One of good examples
is the famous and fundamental Hermitian equation (containing a
parameter) very closely related to the theory of Eq.~\eqref{1}.

Section~\ref{10} contains some conclusive remarks. \vfil

\section{Background\label{2}}
\subsection{Commuting operators}
The standard soliton/FG-solutions of integr\-able equations are known
to be defined through the associated linear PDEs for the auxiliary
$\Psi$-function:
\begin{equation}\label{L}
\bL(\pot;\partial_x)\Psi=\lambda\,\Psi\,,\qquad
\Psi_t=\bA(\pot;\partial_x)\,\Psi\,,
\end{equation}
where $\bL$ and $\bA$ are the ordinary differential (scalar or matrix)
operators with coefficients \pot\ being, in general, some functions of
variables $(x,t)$: $\pot\DEF\bigl\{u_k^{}(x,t)\bigr\}$. The set of
function \pot\ is usually termed as the potential. This is because the
first of Eqs.~\eqref{L} contains no $\partial_t$ and thereby may be
considered as a spectral problem defined by the operator expression
$\bL$. Moreover, the classical (spectral \cite{21}) property of the
potential \pot\ to be finite-gap  is determined by the spectrum of this
eigenvalue problem and does not depend on $t$. On the other hand, the
classical Burchnall--Chaundy--Baker (algebro-geometric) formulation
\cite{31,32} uses one more spectral problem instead of second equation
in \eqref{L}:
\begin{equation}\label{A}
\bL(\pot;\partial_x)\Psi=\lambda\,\Psi\,,\qquad
\bA(\pot;\partial_x)\Psi=\mu\,\Psi\,,
\end{equation}
In both the formulations the nontrivial theory appears if equations
\eqref{L} or \eqref{A} are compatible; there exists a common solution
$\Psi$ and the potential is subjected to certain conditions. These are
the well-known commutativities of operators
$\big[\bL,\,\partial_t-\bA\big]=0$ for \eqref{L} and
$\big[\bL,\,\bA\big]=0$ for \eqref{A}.

\subsubsection{Why one should pass from \eqref{L} to \eqref{A}?}
There is a simple explanation as to this question. Let the operator
$\bA$ be determined by an hierarchy of some integrable PDEs
\begin{equation}\label{K}
\potU_{\!t}=\mathsf{K}(\dpotU)\,,
\end{equation}
where, as usual in a formal differential calculus \cite{28} and in the
sequel, the symbol $\dpotU$ is used to denote the finite set of
derivatives $\{\potU$, $\potU_{\!x}$, $\potU_{\!\mathit{xx}},\ldots\}$.
Such hierarchies have been well tabulated in the literature
\cite{dik4,GH}. Regard Eqs.~\eqref{L} from the viewpoint of their
explicit integration (in some sense of the word). In a straightforward
statement this problem is impossible to solve because $t$ is a hidden
variable in the first of Eqs.~\eqref{L}. In fact, this variable may be
thought of as an additional spectral one\footnote{This is so indeed
because any additional parameter in coefficients of
$\bL(\{U\};\partial_x)$ may be formally viewed as a spectral variable
defining a spectral operator pencil.} and the $t$-dependence of the
potential \pot\ is unknown/undetermined. Complexity of the question is
not reduced until Eqs.~\eqref{L} remain \textit{partial} differential
equations. Indeed, these equations do not have a general solution
expressible in terms of any known functions. This is because
Eqs.~\eqref{K}, integrable as they are, are not solvable in general.
The only way, in order to solve the question, is to consider some
particular situations when  PDEs \eqref{K} admit transformations into
some ODEs. Clearly, such a possibility is related to the self-similar
reductions of Eqs.~\eqref{K} and the most simple case is of course the
reduction to the stationary variable $z=x-c\,t$. Assuming now the
dependence $\potU=\potU(x-c\,t)$, we get, instead of \eqref{L},
$$
\bL(\potU(z);\partial_z)\Psi=\lambda\,\Psi\,,\qquad
\Psi_t=\bA(\potU(z);\partial_z)\,\Psi\,.
$$
The time $t$ disappears in  the first of these equations and therefore
there exists a solution in form of separability of variables:
\hbox{$\Psi=T(t)\cdot\psi(z)$}. Substituting this ansatz into the last
equations, we get immediately a separability parameter\footnote{An
independent treatment of the second eigenvalue of the second operator.}
$\mu$ and an exponential dependence $T=\exp(\mu\,t)$. Equations
\eqref{L} thus acquire  form \eqref{A}:
$$
\bL(\pot;\partial_z)\,\psi=\lambda\,\psi\,,\qquad
\big(\bA(\pot;\partial_z)+c\,\partial_z\big)\,\psi=\mu\,\psi\,,
$$
where $\potU=\potU(z)$. Compatibility conditions of these equations
$\big[\bL,\,\bA+c\,\partial_z\big]=0$ generate the stationary
Lax--Novikov equations  $F(\dpotU)=0$ \cite{GH} and, incidentally,
different kind reductions may lead to other kind equations. For
example,  the general form of Painlev\'e equations and their
`$(\bL,\bA)$-pairs' can be obtained by this way \cite{nijhoff}.

\subsubsection{Algebraic curve and integrals of Lax--Novikov equations}
Let $n$, $m$ be the orders of the operators $\bL$, $\bA$ respectively.
Assuming that Eqs.~\eqref{A} are compatible, we conclude, by
elimination of the $\Psi$ from \eqref{A}, that parameters $\lambda$ and
$\mu$ are related by a \textit{polynomial} dependence
\hbox{$W(\stackrel{n}{\hbox{\smaller[2]$\mathstrut$}\mu},\,
\stackrel{m}{\hbox{\smaller[2]$\mathstrut$}\phantom{\mu}}
\hspace{-0.9em}\lambda)=0$} and commuting operators $\bL$, $\bA$
themselves are also tied by the same dependence: $W(\bA,\,\bL)$ is a
zero operator \cite{31,32}. The identity $W(\bA,\,\bL)=0$  implies that
its coefficients, being the differential functions $E_j$ of \pot, must
be free constants $C_j$ independent of $x$. These yields a set of
(compatible) ODEs $E_j(\dpotU)=C_j$ providing some integrals of motion
$E_j$ for the Lax--Novikov equations mentioned above \cite{dik4}. The
further theory requires that the potential \pot\ be the complex
analytic function of the complex argument $x$.

\subsection{Theta-function formulae}
Solution to Eqs.~\eqref{A} is an $n$-valued function of $\lambda$ (and
an $m$-valued function of $\mu$) and this multi-valuedness is related
to the algebraic equation $W(\mu,\lambda)=0$.  This equation, being
viewed as an algebraic curve over $\mathbb{C}$, defines a compact
Riemann surface $\R$ of a finite genus. It is well known that
multi-dimensional $\Theta$-functions are the universal tool in order to
impart a `single-valued form' to the analytic apparatus on Riemann
surfaces of multi-valued algebraic functions \cite{baker}. Baker
\cite{32}, initiated by work \cite{31}, was the first to transfer this
$\lambda$-multi-valuedness of the $\Psi$ into a single-valued function
of a point $\p\in\R$ and to construct the function itself through
Riemann's $\Theta$-functions; the very first sentence of the work
\cite{32} indicates this.  Akhiezer \cite{15} arrived at the same
objects when considering the problem \eqref{1} from completely
different---purely spectral---viewpoint. In the 1970s all these
discoveries were substantially developed, generalized
\cite{17,21,4,8,24,25,29}, and acquired their current $\Theta$-function
form. The recent excellent survey by Matveev \cite{matv2} is, perhaps,
the best work both on the history of the question and background
material. Most general and modern treatment of these results goes back
to works by Krichever and reads as follows.

The spectral variable is thought of as the meromorphic function
$\lambda=\lambda(\p)$ in the sense that the complex number $\lambda$ in
problem \eqref{L} is replaced with an abstract coordinate on $\R$: the
point $\p$. The variable $x$ is viewed as a parameter now and the
function $\Psi$, as function on $\R$, is the function $\Psi(\p)$ of an
exponential type \cite{15,32} with essential singularities of some
prescribed form \cite{4,24}. The formula realization of this result is
known presently as a concept of the Baker--Akhiezer (BA) function
\cite{38}. The operator $\bL$ as above and its coefficients (the
potential $\pot$) is called the \textit{finite-gap} or
\textit{algebro-geometric}. Of course, we could equally well say the
same about $\mu=\mu(\p)$ for operator $\bA$ in problem \eqref{A}.

The structure of  solutions is universal \cite{32,24}. For example, the
scalar problem
\begin{equation}\label{Lg}
\bL\Psi\DEF\frac{d^n}{dx^n}\,\Psi+u_2^{}(x)\,
\frac{d^{n\smin2}}{dx^{n\smin2}}\,\Psi+\cdots+ u_n(x)\,\Psi=
\lambda\,\Psi\,,
\end{equation}
and therefore equation \eqref{1}, in the class of FG-potentials, has a
solution which is given, under some normalization, by a general
formula:
\begin{equation}\label{theta}
\Psi\big(x;\,\lambda(\p)\big)=
\frac{\Theta\big(x\,\boldsymbol{U}+
\boldsymbol{D}+\boldsymbol{\mathfrak{U}}(\p)\big)}
{\Theta(x\,\boldsymbol{U}+\boldsymbol{D})}\,
\re^{\II(\p)\,x}\,.
\end{equation}
Distinctions between different FG-potentials $\{u_k(x)\}$ are only in
the changes of the associated algebraic curve $W(\mu,\lambda)=0$ and
the curve itself (its $\R$ with a canonical homology base
$(\boldsymbol{\mathfrak{a}},\boldsymbol{\mathfrak{b}})$) determines all
the quantities appearing in \eqref{theta}. Namely:
$\Theta(\boldsymbol{z})$ is the canonical $g$-dimensional theta-series
\begin{equation}\label{series}
\Theta(\boldsymbol{z})\DEF\Theta(\boldsymbol{z}|\boldsymbol{\Pi})=
\ds\sum\limits_{\hbox{\tiny$\boldsymbol{N}$}
\hbox{\footnotesize\raise0.01em\hbox{$\in$}}
\hbox{\tiny\,$\mathbb{Z}$}^g_{\mathstrut}}
{\ds\mathstrut}\!\!\!\re^{\pi \ri\, \langle
\boldsymbol{\Pi}\boldsymbol{N}\!,\,\boldsymbol{N} \rangle+ 2\pi
\ri\,\langle \boldsymbol{N}\!, \,\boldsymbol{z}\rangle}_{\mathstrut}
\end{equation}
of $g$ arguments $\boldsymbol{z}=(z_1,\ldots,z_g)$, built by an
$\boldsymbol{\mathfrak{a}}$-periods $\boldsymbol\Pi$-matrix of
normalized holomorphic Abelian integrals
$\boldsymbol{\mathfrak{U}}(\p)= \big(\mathfrak{U}_1(\p),
\ldots,\mathfrak{U}_g(\p)\big)$ on $\R$; symbols like
$\langle\boldsymbol{N}\!,\,\boldsymbol{z}\rangle$ denote Euclidian
scalar product $\langle\boldsymbol{N}\!,\,\boldsymbol{z}\rangle\DEF
N_jz_j$; $\p$ is a free point on $\R$; $\II(\p)$ is a normalized
Abelian integral of the second kind with only pole of the first order
at a point $\p_{\!\!\infty}$ at which
$\lambda(\p_{\!\!\infty})=\infty$; the vector $\boldsymbol{U}$ times
$2\,\pi\,\ri$ is a vector of $\boldsymbol{\mathfrak{b}}$-periods of
this integral; $\boldsymbol{D}$ is an arbitrary constant $g$-vector.
All this terminology is exhaustively expounded in the numerous
literature (see, \eg, \cite{baker,38,clebsch,5,29,GH,4}) and the
algebraic dependence $W(\mu,\lambda)=0$  is referred frequently to as
the spectral curve. As for the problem \eqref{1}, this curve
constitutes a hyperelliptic equation of the form
\begin{equation}\label{hyper}
\mu^2=(\lambda-E_1)\cdots(\lambda-E_{2g+1})
\end{equation}
and all the FG-potentials are given by the famous Its--Matveev formula
\cite{4,17}
\begin{equation}\label{its}
u(x)=-2\,\frac{d^2}{dx^2}\ln\Theta(x\,\boldsymbol{U}+\boldsymbol{D})
+\mathrm{const}\,.
\end{equation}

\begin{example}
Most simple and popular  example is a 1-gap potential. It is unique and
is determined by the Weierstrass elliptic curve
\begin{equation}\label{torus}
\mu^2=4\,(\lambda-e)(\lambda-e\hbox{\smaller[2]$'$})
(\lambda-e\hbox{\smaller[2]$''$})\,;
\end{equation}
we denote its modulus as $\hbox{\footnotesize$\Pi$}=\omp\!\!/\omega$,
where $\omega$, $\omp$ are   Weierstrassian half-periods normalized by
the condition $\boldsymbol{\Im}\,\hbox{\footnotesize$\Pi$}>0$
\cite{ell}. Since this case is a 1-dimensional one, we replace
$\p\mapsto\mathfrak{u}$ and put\footnote{Jacobian of an elliptic curve
is isomorphic to the curve itself.}
$\boldsymbol{\mathfrak{U}}(\p)=\mathfrak{u}$. Therefore, normalizing
integral $\II$, we have
$$
\II(\mathfrak{u})=\zeta(2\,\omega\,\mathfrak{u})-
2\,\eta\,\mathfrak{u}\quad\Rightarrow\;\;
\left\{
\begin{aligned}
\II(\mathfrak{u}+1)&=\II(\mathfrak{u})\\
\II(\mathfrak{u}+\hbox{\footnotesize$\Pi$})&=\II(\mathfrak{u})+
\frac{\pi}{\ri\,\omega}\quad \Rightarrow\quad
\boldsymbol{U}=-\frac{1}{2\,\omega}\,,
\end{aligned}
\right.
$$
where $\zeta(z)\DEF\zeta(z|\omega,\omp)$ and
$\eta\DEF\eta(\omega,\omp)$ are the standard objects accompanying the
theory of Weierstrassian function $\wp(z)\DEF\wp(z|\omega,\omp)$
\cite{ell,WW}.  Putting for simplicity $\boldsymbol{D}=0$, formulae
\eqref{theta} and \eqref{its} become
$$
u(x)=-2\,\frac{d^2}{dx^2}\ln\Theta\!\!\left(\Mfrac{x}{2\,\omega}
\mbig|
\hbox{\footnotesize$\Pi$}\right)-2\,\frac{\eta}{\omega}\,,
\qquad \Psi(x;\lambda)= \frac{\Theta
\mbig[5](
\mfrac{x}{2\,\omega}-\mathfrak{u}\mbig| \hbox{\footnotesize$\Pi$}
\mbig[5])}
{\Theta\mbig[5](\mfrac{x}{2\,\omega}\mbig| \hbox{\footnotesize$\Pi$}
\mbig[5])}\,
\re^{\left\{\zeta(2\omega\mathfrak{u})-
2\eta\mathfrak{u}\right\}\,x}_{\mathstrut}
$$
and $\lambda=\wp(2\,\omega\,\mathfrak{u})$ (this is the formula
$\lambda=\lambda(\p)$ above). This potential is a precise equivalent of
the classical Lam\'e form $u(x)=2\,\wp(x-\omega-\omp)$ and
1-dimensional $\Theta(z|\hbox{\footnotesize$\Pi$})$-series here
coincides exactly with the Jacobi function
$\theta_3(z|\hbox{\footnotesize$\Pi$})$ defined by the standard formula
\eqref{jacobi}.
\end{example}

All the constructions mentioned above---spectral, algebro-geometric,
$\Theta$-func\-tion, and their varieties---are completely equivalent
\cite{25}, which is why we shall refer to these approaches merely as
spectral for short.

\section{Integrability of equation \protect\eqref{1}
by Picard--Vessiot\label{3}}

\subsection{The function $R$} The theory of equation \eqref{1} is
closely related to the fundamental linear differential equation
\begin{equation}\label{hermite}
R_\mathit{xxx}-4\,(u+\lambda)R_x-2\,u_xR=0
\end{equation}
determining  function $R=R(x;\lambda)$. Integrating this equation and
denoting an integration constant as $\mu$, we get
\begin{equation}\label{muR}
\mu^2=-\frac12\,R\,R_\mathit{xx}+\frac14\,
\pow{R}{x}{2}+(u+\lambda)R^2
\end{equation}
(see work \cite{br3} for a successive derivation of these formulae with
use of Lie's symmetries approach.) Then the FG-solutions to the
$\Psi$-function for Eq.~\eqref{1}, in general position $\mu\ne0$, are
given by the well-known formula
\begin{align}
\notag \Psi^{\pm}(x;\,\lambda)&=\sqrt{R(x;\lambda)}\,
\exp\!\!\!\int\limits^{\;x}\!\!\!
\frac{\pm\mu\,dx}{R(x;\lambda)}\\[0.3em]
&=\exp\!\!\int\limits^{\;x}\!\!\frac{R_x(x;\lambda)\pm2\,\mu}
{2\,R(x;\lambda)}\,dx\,.\label{pm}
\end{align}
Formulae \eqref{hermite}--\eqref{pm} and their variations have been
repeatedly appeared  in the literature \cite{8,29,74,38,GH}. Their
precise meaning, however, lies in the fact that the availability of
formula \eqref{pm} itself does not mean any integrability \cite{br3}.
This is just an ansatz for Eq.~\eqref{1} and its solution  should be
written down in terms of indefinite integrals; otherwise all the
procedure would reduce to re-notations.

\subsection{General  formula for the $\Psi$-function}
In the language of commuting Burchnall--Chaundy
operators \cite{31}  an explicit formula for the $\Psi$-function
(including \eqref{pm}) results from the sequential elimination of
derivatives $\Psi^{(k)}$ from the pair of differential equations
\eqref{A} down to the formula
\begin{equation}\label{psi1}
\Psi_{\!x}=G(\dpotU;\lambda,\mu)\,\Psi\,\quad\Rightarrow\quad
\Psi=\exp\!\!\int\limits^{\;x}\!\!\!G(\dpotU;\lambda,\mu)\,dx\,.
\end{equation}
This simple recipe of getting the $\Psi$-function, perhaps, has no
received mention in the modern literature \cite{ustinov}; it is,
however, implicitly exploited in monograph \cite{GH}. Elimination of
the last derivative $\Psi_{\!x}$ leads to equation of the curve
$W(\mu,\lambda)=0$. The algebraic dependence $W(\bA,\bL)=0$ serves, in
some of works, as the basis for a formal definition of the algebraic
integrability \cite{25}.

\subsection{Novikov's equations and differential field}
Motivated by the desire to define a differential field over which the
integration is performed, we need to know the differential structure of
function $R(x;\lambda)$. It is well known that this function is a
series in $\lambda$ with coefficients being differential polynomials in
$u(x)$. Computational formulae for these polynomials have been detailed
in the famous work \cite[formulae (8)]{28}. Finite order differential
conditions on the potential appear if equation \eqref{hermite} has a
solution being a polynomial in $\lambda$ \cite{br3}. As in the
FG-theory, the equation \eqref{hermite} is also known in differential
Galois theory as the second symmetric power of operator \eqref{1}
\cite[\S\,4.3.4]{singerput}, \cite[p.~671]{singer1}.

\begin{definition}
The potential $u(x)$ is said to be an FG-potential if equation
\eqref{hermite} has a solution $R(x;\lambda)$ being a polynomial in
$\lambda$. No restrictions on coefficients of the curve \eqref{muR}
$\Leftrightarrow$ \eqref{hyper} have been imposed.
\end{definition}

This definition is not a standard one but it is of course equivalent to
the spectral \cite[Theorem~1]{21}, algebraic \cite[Ch.~3]{28}, or
quadrature \cite{49_2,br3} definitions. The only exception is a formal
$\Theta$-function (algebro-geometric) setting because it does not use
spectra, resolvent $R$, or quadratures. An important point here is the
fact that the function domain for the potential is not defined as usual
in Galois theory but calculated. This calculation is a problem of
nonlinear integration, that is integration of ODEs for the potential
$u(x)$. These are the famous Novikov equations \cite{19}. We shall call
this base field the \textit{Novikov differential field $\Nu$ in
$u(x)$-representation} bearing in mind that $u(x)$ satisfies a Novikov
equation $F(u,u_x,u_{\mathit{xx}},\ldots, \pow{u}{x}{(2g+1)})=0$. The
field $\Nu$ consists of rational functions of $u(x)$ over
$\mathbb{C}(\lambda)$ and its derivatives. Parameter $\lambda$, the $g$
constants $c_k^{}$ coming from the integral Gel'fand--Dickey recurrence
\cite{28}, and branch points $E_k$ of the curve \eqref{muR} belong to a
subfield of constants for $\Nu$. Here is an example of Novikov's
equation under $g=2$:
\begin{equation}\label{N2}
(u_{\mathit{xxxxx}}-10\,u\,u_{\mathit{xxx}}-20\,u_x u_{\mathit{xx}}+
30\,u^2u_x)+c_1^{}(u_{\mathit{xxx}}-6\,u\,u_x)+c_2^{}u_x=0\,.
\end{equation}

Integration constant $\mu$, in an FG-class, is fixed to be dependent on
the parameter in equation, that is $\mu=\mu(\lambda)$, and equation
\eqref{muR} turns into the formula \eqref{hyper}. At the same time, as
soon as $R(x;\lambda)$ becomes a polynomial in $\lambda$ it becomes a
differential polynomial $R(\dpotu;\lambda)\in\Nu$. It should be noted
that one  suffices to have only one solution of Eq.~\eqref{hermite}
belonging to $\Nu$ (see Sect.~\ref{9.1} further below).

\subsection{Picard--Vessiot field, constants, and their hyperelliptic
extension} As usual, the Picard--Vessiot extension
$$
\Nu\mbig[1]\langle\Psi^{\pm}\mbig[1]\rangle
\DEF\Nu\mbig[1](\Psi^+,\Psi^-,
\pow{\Psi}{x}{+},\pow{\Psi}{x}{-}\mbig[1])\,,
$$
\ie, the splitting field, results from attaching to the field $\Nu$
integrals $\Psi^{\pm}$ of equation \eqref{1} and its derivatives
\cite{kolchin,singer,kolchin3,kapl,ispanec}. A simplest kind of
extensions corresponds  to solvable cases of the Galois theory and is
known as the extension of Liouville \cite[p.~33]{singerput},
\cite{ispanec,kapl}.

\begin{definition}
An extension $\widetilde\N$ of the differential field $\N$ is said to
be Liouvillian if there exists a tower of fields
$\N=\N_0\subset\N_1\subset\cdots\subset \N_n=\widetilde\N$ such that
$\N_{k+1}=\N_k(\psi_k^{})$, where $\partial_x\psi_k^{}$ or
$\partial_x\!\ln\!\psi_k^{}$ is an algebraic element over $\N_k$.
\end{definition}

In other words,  Liouvillian extensions are the natural enlargements of
the base field performed by a step-by-step adjunction of solution to
the simplest integrable ODEs: the 1st order linear ODEs
\begin{equation}\label{AB}
\psi_x^{}=A\,\psi+B
\end{equation}
with coefficients $(A,B)$ being algebraic/rational over previous step
field. The standard adjunctions of an algebraic element
$\boldsymbol{a}$, integral $\hbox{\larger$\int$}\!p\,dx$, or
exponential $\exp\!\!\hbox{\larger$\int$}\!p\,dx$, where $p\in\N_k$,
are obtained by putting here $(A,B)=(0,\boldsymbol{a}_x)$,
$(A,B)=(0,p)$, and $(A,B)=(p,0)$ respectively.

Solvability of equation by quadratures is directly connected with a
structure of the group constituting a differential version of the
polynomial Galois group.

\begin{definition}
Differential Galois group $\mathrm{Gal}\big(\N \langle\Psi\rangle\big)$
of a linear ODE defined over $\N$ is a set of linear transformations of
its solutions $\Psi$'s  that preserve all the algebraic $($over
$\N$\/$)$ relations among $\Psi$'s and their derivatives $\Psi^{(n)}$
$($differential automorphisms group\/$)$.
\end{definition}

The Picard--Vessiot extension must have the same set of constants as
$\Nu$ \cite{kolchin}, \cite[p.~411]{kolchin3}, \cite{singerput}. It is
known that for equations of the form \eqref{1} the Wronskian
$\Psi'{\!\!{}_1}\Psi_2-\Psi'{\!\!{}_2}\Psi_1$ is a constant. Taking
\eqref{pm} as one of possible bases for solutions, we get
\begin{equation}\label{w}
\begin{aligned}
\pow{\Psi}{x}{+}\Psi^{-} -\Psi^{+}\pow{\Psi}{x}{-}&=
\sqrt{\pow{R}{x}{2}-2\,R\,R_\mathit{xx}+
4\,(u+\lambda)\,R^2}\\&=2\,\mu(\lambda)\,.
\end{aligned}
\end{equation}
Clearly, it must be a constant  of the Picard--Vessiot field whatever
the potential $u(x)$ may be. Neglecting for the moment other constants,
we obtain that field of constants $\mathbb{C}(\lambda)$ requires
adjunction of the constant $\mu$:
$\mathbb{C}(\lambda)\mapsto\mathbb{C}(\lambda,\mu)$. Character of this
extension is determined by the $\lambda$-dependence of the function
$R$. It may be polynomial, rational, or essentially transcendental. The
case of rational polynomial $R(\lambda)$ is possible only if its poles
do not depend on $x$; for we should otherwise have the
$\lambda$-dependent poles of the $\Psi$-function, which is impossible
by virtue of structure of Eq.~\eqref{1}. Hence the function $R$ must be
an entire function of $\lambda$. The Galois group does depend in
general on parameters of equation but we are interested in the
following cases:
\begin{itemize}
\item \textit{When the Lie--Kolchin integrability structure is the
same for generic $\lambda$?}
\end{itemize}

Therefore two kinds of theories do exist, according as the field
$\mathbb{C}(\lambda,\mu)$ does not, or does, belong to a
\textit{finite} algebraic or \textit{infinite} extension of
$\mathbb{C}(\lambda)$. The latter fields are excessively general
because there are huge varieties of entire transcendental functions
(without any classification) and they do not produce any restrictions
on potential. On the other hand, finite extensions are of fixed
algebraic (necessarily hyperelliptic) character and calibrated by the
only number, namely, by the $\lambda$-degree $g$ of the polynomial
$R(\dpotu;\lambda)$. In this case infinite Gel'fand--Dickey recurrences
terminate and equation \eqref{muR} leads to a finite set of
differential restrictions on $u(x)$ in form of differential polynomials
$E_j(\dpotu)=C_j$. This gives in fact yet another independent
motivation (\`a la Picard--Vessiot) for appearance/availability of a
polynomial solution to Eq.~\eqref{hermite}. For brevity, we shall adopt
however the previous shorter notation for the base field $\Nu$ without
explicit indication of its $\lambda,\mu$-dependence
$\N(\dpotu;\lambda,\mu)$ or dependencies on other field constants
$\N(\dpotu;\lambda,\mu,C_j,\ldots)$, where dots stand for remaining
constants which arise upon complete integration of a Novikov equation.

\subsection{Finite-gap Galois groups}
Below is a characterization of the  Galois  group of equation \eqref{1}
in the class of FG-potentials.  Condition on parameter $\lambda$ of
being an arbitrary quantity is a fundamental requirement meant
throughout the paper.

\begin{theorem}\label{T1}
The Picard--Vessiot extension $\Nu\langle\Psi^{\pm}\rangle$ is a
Liouvillian extension of the transcendence degree equal to $1$.
Associated  group $\mathrm{Gal}\big(\Nu \langle\Psi^{\pm}\rangle\big)$,
under  generic $\lambda\ne E_j$, is connected and isomorphic to the
group $\G=\mbig(
\begin{smallmatrix}\alpha &
0\\0 & \alpha^{\smin1}\end{smallmatrix}\! \mbig)$, where
$\alpha\in\mathbb{C}$. For other values of $\lambda$ it is isomorphic
to group $\G=\mbig(
\begin{smallmatrix}\pm1 &
\alpha\\0 & \pm1\end{smallmatrix}\! \mbig)$.
\end{theorem}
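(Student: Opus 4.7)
The plan is to extract all three assertions from the explicit quadrature formula \eqref{pm} combined with standard Picard--Vessiot linear algebra, treating the generic case $\lambda\ne E_j$ and the branch-point case $\lambda=E_j$ separately. Liouvillianness is immediate: \eqref{pm} presents each $\Psi^{\pm}$ as $\exp\!\int G^{\pm}\,dx$ with $G^{\pm}=(R_x\pm 2\mu)/(2R)\in\Nu(\mu)$. One first adjoins the constant $\mu$ (a degree-$2$ algebraic extension of constants dictated by \eqref{hyper}) and then adjoins the two exponentials of integrals, producing a Liouvillian tower of length at most three.

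For the transcendence degree I would observe that \eqref{pm} immediately yields $\Psi^{+}\Psi^{-}=R(x;\lambda)\in\Nu$, so $\Psi^{-}$ is a rational function of $\Psi^{+}$ over $\Nu$ and the Picard--Vessiot field is generated over $\Nu$ by the single element $\Psi^{+}$; this bounds $\mathrm{trdeg}\le 1$. The opposite inequality is the \emph{main obstacle} and the only non-formal step: one must show that $\int\mu\,dx/R$ is transcendental over $\Nu$. Partial-fractioning $\mu/R=\sum_k\mu_k/(x-x_k)$, with $x_k(\lambda)$ the zeros in $x$ of the polynomial $R(\dpotu;\lambda)$, turns this into a statement about the sum of logarithms $\sum_k\mu_k\log(x-x_k)$, which is the content of the Weierstrass theorem on logarithmic Abelian integrals recalled in Section~\ref{4}.

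With $\mathrm{trdeg}=1$ in hand I would identify the generic Galois group by writing any $\sigma\in\mathrm{Gal}(\Nu\langle\Psi^{\pm}\rangle)$ as a matrix $M=\begin{pmatrix}a&b\\c&d\end{pmatrix}$ (entries in the constants) in the basis $(\Psi^{+},\Psi^{-})$. Invariance of $\Psi^{+}\Psi^{-}=R\in\Nu$ expands to $ac\,(\Psi^{+})^{2}+(ad+bc-1)\Psi^{+}\Psi^{-}+bd\,(\Psi^{-})^{2}=0$, and since $\mathrm{trdeg}=1$ forces $(\Psi^{+})^{2}$, $\Psi^{+}\Psi^{-}$, $(\Psi^{-})^{2}$ to be linearly independent over the constants, this gives $ac=bd=0$ and $ad+bc=1$. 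Invariance of the Wronskian $2\mu$ in \eqref{w} adds $\det M=ad-bc=1$. Combining yields $ad=1$ and $bc=0$, whence $b=c=0$ and $M=\mathrm{diag}(\alpha,\alpha^{-1})$, so $\mathrm{Gal}\subseteq\G$. The equality $\dim\mathrm{Gal}=\mathrm{trdeg}=1$ upgrades the inclusion to equality, since the only closed $1$-dimensional subgroup of $\mathbb{G}_m$ is $\mathbb{G}_m$ itself, which is connected.

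At the branch points $\lambda=E_j$ one has $\mu=0$, \eqref{pm} collapses to the single solution $\Psi_1=\sqrt{R}$, and a second independent solution arises by reduction of order, $\Psi_2=\Psi_1\!\int\!dx/R$. Now $\Psi_1^{2}=R\in\Nu$ forces $\sigma\Psi_1=\epsilon\Psi_1$ with $\epsilon^{2}=1$, while $\Psi_2/\Psi_1$ is the primitive of an element of $\Nu$, so $\sigma(\Psi_2/\Psi_1)=\Psi_2/\Psi_1+\alpha$ for some $\alpha\in\mathbb{C}$. In the basis $(\Psi_1,\Psi_2)$ this yields $M=\epsilon\begin{pmatrix}1&\alpha\\0&1\end{pmatrix}$, matching the theorem's upper-triangular normal form; the Wronskian condition $\det M=\epsilon^{2}=1$ is automatic and is exactly what locks the two $\pm$-signs together. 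Realizing the full one-parameter $\alpha$-family again reduces to the Abelian-integral transcendence argument of paragraph two, now applied at a branch point.
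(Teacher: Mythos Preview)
Your overall architecture matches the paper's: Liouvillianness is read off from \eqref{pm}, the bound $\mathrm{trdeg}\le 1$ comes from $\Psi^{+}\Psi^{-}=R\in\Nu$, the Galois group is pinned down by checking which linear substitutions of $(\Psi^+,\Psi^-)$ preserve the relations over $\Nu$, and the branch-point case $\mu=0$ is handled via $\sqrt{R}$ and $\sqrt{R}\!\int dx/R$ exactly as in the paper. Your use of the pair $\Psi^+\Psi^-=R$ together with the Wronskian \eqref{w} is slightly different bookkeeping from the paper's, which instead checks the full list \eqref{temp} including the derivative relations $\Psi_x^{\pm}=\frac{R_x\pm 2\mu}{2R}\,\Psi^{\pm}$ (these kill the off-diagonal entry directly, without needing the Wronskian); both routes reach the diagonal form.

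The genuine gap is exactly where you flag it, the lower bound $\mathrm{trdeg}\ge 1$. Your partial-fraction step $\mu/R=\sum_k\mu_k/(x-x_k)$ is ill-posed: $R(\dpotu;\lambda)$ is a polynomial in $\lambda$ whose coefficients are differential polynomials in $u(x)$, not a polynomial or rational function in $x$; for a genuine FG-potential the map $x\mapsto R(x;\lambda)$ is meromorphic and quasi-periodic with infinitely many zeros, so there are no finitely many ``$x_k(\lambda)$'' to partial-fraction against. The correct factorisation is \eqref{factor}, $R=\prod_k(\lambda-\gamma_k(x))$, and the change of variable via \eqref{dubrovin} produces the third-kind Abelian integrals of Theorem~\ref{T3}, which is a different manoeuvre from the one you sketch. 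Moreover, the Weierstrass result of Theorem~\ref{T4} is a permutation-of-limits-and-parameters identity for a normalized logarithmic integral, not a transcendence theorem, so it does not deliver the conclusion you want.

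The paper does not attempt an intrinsic field-theoretic argument here at all. It invokes the explicit $\Theta$-formula \eqref{theta}: were the Galois group finite, $\Psi^{\pm}$ would be algebraic over $\Nu$, contradicting the manifest single-valuedness of \eqref{theta} as a function of $x$. This simultaneously forces connectedness and $\mathrm{trdeg}=1$. Your dimension argument $\dim\mathrm{Gal}=\mathrm{trdeg}$ is cleaner in spirit, but it is circular until you supply an independent proof that $\exp\!\int\mu\,dx/R$ is transcendental over $\Nu$; that requires a genuine transcendence statement for exponentials of Abelian integrals, which Section~\ref{4} does not contain.
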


\begin{proof}
For generic $\lambda$'s  integral \eqref{pm} does not belong to $\Nu$.
From \eqref{pm} it follows that this extension is Liouvillian. Take the
canonical basis of solutions \eqref{pm}. Then the three quantities
$\{\Psi^-,\pow{\Psi}{x}{+},\pow{\Psi}{x}{-}\}$ are expressed through
the transcendent being adjoined $\Psi^+$ as follows:
\begin{equation}\label{temp}
\Psi^{-}=\frac{R}{\Psi^{+}}\,,\qquad
\pow{\Psi}{x}{+}=\frac{R_x+2\mu}{2R}\,\Psi^{+}\,, \qquad
\pow{\Psi}{x}{-}=\frac{R_x-2\mu}{2\,\Psi^{+}}\qquad (\mu\ne0)\,.
\end{equation}
Clearly,  in the case of such $\lambda$'s that $\mu=0$ these relations
cease to be valid since $(\Psi^+,\,\Psi^-)$ become linearly dependent
on each other. The relations should be modified and we choose the
following basis $\Psi^\pm$:
\begin{equation}\label{temp2}
(\Psi^-)^2=R\,,\qquad
\pow{\Psi}{x}{-}=\frac{R_x}{2R}\,\Psi^{-}\,,\qquad
\pow{\Psi}{x}{+}\Psi^{-} -\Psi^{+}\pow{\Psi}{x}{-}=1\,.
\end{equation}
In both of these cases we adjoin  one transcendental element
(respectively):
$$
\Psi^+=
\exp\!\!\!\int\limits^{\,\,x}\!\!\!
\frac{R_x+2\,\mu}{2\,R}\,dx
\qquad \text{or}\qquad\Psi^+=\sqrt{\!R\,}\int\limits^{\,\,x}\!\!\!
\frac{dx}{R}\,.
$$
In the latter case the radical $\sqrt{\!R\,}=\Psi^-$ can sometimes be
element of $\Nu$ as an example of the Lam\'e equations shows
\cite[Ch.~23]{WW}, \cite{maier}.

Let us check invariance of relations \eqref{temp}--\eqref{temp2} with
respect to linear transformation of the basis
\begin{equation*}\label{trans}
\left(\begin{matrix}\Psi^+\\\Psi^-\end{matrix}\right)\mapsto
\left(\begin{matrix}\alpha\,\Psi^++\beta\,\Psi^-\\
\gamma\,\Psi^++\delta\,\Psi^-\end{matrix}\right);
\end{equation*}
this determines the Galois group
$\G=\big(\begin{smallmatrix}\alpha & \beta\\
\gamma & \delta\end{smallmatrix}\bigr)$  completely. In case
\eqref{temp} we get the following set of equalities:
$$
\alpha\,\gamma=0\,,\qquad \beta\,\delta=0\,,\qquad
\alpha\,\delta+\beta\,\gamma=1\,,\qquad\beta=0\,.
$$
From this it follows that $\beta=0$, $\gamma=0$, and
$\alpha\,\delta=1$. The number $\alpha$ can not be any algebraic one;
for we should otherwise have a finite Galois group and the algebraic
$\Psi^{\pm}$-solutions to give a contradiction with a single-valuedness
of the general formula \eqref{theta}. In case \eqref{temp2} we derive
that $\delta^2=1$, $\gamma=0$, $\alpha\,\delta=1$, and no conditions on
$\beta$ (except for degenerate cases of curve). Hence in generic case
$\lambda\ne E_j$ the group $\G$ is connected. Its possible forms  are
thus as follows:
$$
\G=\mbig[7](\,
\begin{matrix}\alpha&0\\0&\alpha^{\smin1}\end{matrix}\mbig[7])
\quad \text{or}\quad
\G=\mbig[7](\,
\begin{matrix}\delta&\beta\\0&\delta^{\smin1}\end{matrix}
\mbig[7]),
$$
where $\delta=\pm1$ (compare with cases~3, 4 in Proposition~2.2 of
\cite{ispanec}).
\end{proof}

To summarize briefly, we conclude that independently of the topological
genus of the curve \eqref{hyper}, `finite-gap' groups
$\mathrm{Gal}\big(\Nu\langle \Psi^{\pm}\rangle\big)$ do not depend on
parameter $\lambda$ and cease to be diagonal and connected only for
isolated values of the parameter.

\begin{corollary}\label{C1}
Equations \eqref{1} of the FG-class are factorizable over $\Nu$:
$$
\partial_{\mathit{xx}}-(u+\lambda) =
\left(\partial_x+\frac12\frac{R_x}{R}\pm\frac{\mu}{R}\right)\!\!\!
\left(\partial_x-\frac12\frac{R_x}{R}\mp\frac{\mu}{R}\right).
$$
\end{corollary}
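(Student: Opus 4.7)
The plan is to reduce the factorization to a Riccati identity and then verify that identity directly from the polynomial defining $R$. Starting from formulae \eqref{temp}, the canonical FG-solutions $\Psi^\pm$ satisfy the first-order linear ODE $\Psi^\pm_{\!x} = a^\pm\,\Psi^\pm$ with
$$
a^\pm = \frac{R_x \pm 2\mu}{2R} = \frac12\frac{R_x}{R} \pm \frac{\mu}{R}.
$$
Since $\partial_x$ and multiplication by a function do not commute, a direct expansion gives
$$
\bigl(\partial_x + a^\pm\bigr)\bigl(\partial_x - a^\pm\bigr) = \partial_{\mathit{xx}} - (a^\pm)_x - (a^\pm)^2,
$$
so the corollary is equivalent to the Riccati identity $(a^\pm)_x + (a^\pm)^2 = u + \lambda$.

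I would then verify this identity algebraically by substituting $a^\pm = (R_x \pm 2\mu)/(2R)$. A short calculation collapses $(a^\pm)_x + (a^\pm)^2$ into $R_{\mathit{xx}}/(2R) + (4\mu^2 - R_x^2)/(4R^2)$. The key step is to eliminate $\mu^2$ by means of the first integral \eqref{muR}, which gives $4\mu^2 = -2R\,R_{\mathit{xx}} + R_x^2 + 4(u+\lambda)R^2$. Inserting this back cancels the $R_{\mathit{xx}}$ and $R_x^2$ terms and leaves precisely $u+\lambda$, as required. Conceptually, this is just the standard fact that if $\Psi$ satisfies a second-order linear equation then $a=\Psi_x/\Psi$ satisfies the associated Riccati equation; the content of \eqref{muR} is that this logarithmic derivative is exactly the rational function $(R_x\pm 2\mu)/(2R)$ of elements of $\Nu$, which is what the FG hypothesis provides.

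There is no real obstacle: the only thing one must be careful about is the non-commutativity of $\partial_x$ with $R_x/R$ and $\mu/R$ when expanding the composition, and the bookkeeping of the correlated signs in the two factors ($\pm$ in the left, $\mp$ in the right). Both factorizations (with the upper and lower sign choice) come out simultaneously because the identity \eqref{muR} is quadratic in $\mu$ and symmetric under $\mu \mapsto -\mu$.
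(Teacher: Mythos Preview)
Your proof is correct. The paper states this corollary without proof, relying on the relations \eqref{temp} established in Theorem~\ref{T1} just above it; your direct verification of the Riccati identity $(a^\pm)_x+(a^\pm)^2=u+\lambda$ via \eqref{muR} is exactly the computation that makes the implicit step explicit, and the observation that $a^\pm=\frac12 R_x/R\pm\mu/R$ lies in $\Nu$ is what justifies the phrase ``factorizable over $\Nu$''.
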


\begin{remark}\label{R2}
We used nowhere any specific form of the polynomial $R$.
Theorem~\ref{T1} is easily restated for arbitrary integrable
$\lambda$-pencils of the 2nd order with the only condition that
$R(x;\lambda)\in\Nu$. Recall that the spectral $\lambda$-pencil is a
generalization of the canonical spectral equation of the form
\eqref{Lg} to  more complex (\eg\ polynomial) dependencies of the
differential expression $\bL$ on the external parameter $\lambda$, that
is $\bL(\pot;\partial_x,\lambda)=0$. An example is the well-known
spectral $\lambda$-pencil of the form
\begin{equation}\label{NLS}
\Psi_\mathit{\!xx}-\frac{u_x}{u}
\,\Psi_{\!x}-\Big(\lambda^2-\frac{u_x}{u}\lambda+u\,v\Big)\,\Psi=0\,;
\end{equation}
it arises when integrating the integrable nonlinear Schr\"odinger
equation \cite{GH,newell,38}. In Sect.~\ref{9} we shall consider other
examples of  the $\lambda$-pencils (see also \cite[Sect.~5(a)]{br3}).
\end{remark}

By virtue of structure of the formula \eqref{psi1} Theorem~\ref{T1} has
a direct generalization.

\begin{theorem}\label{T2}
Let $\N(\dpotU)$ be a Novikov field associated with the pair of
Burch\-nall--Chaundy scalar operators \eqref{A}. Then both of these
equations  are integrable in Liouvillian extensions of the same
transcendence degree. Under generic $\lambda$, $\mu$ the differential
Galois groups of these equations are connected and isomorphic to the
diagonal groups $\G= \mathrm{Diag}(\alpha,\beta,\ldots,\gamma)
\subset\mathrm{GL}(\mathbb{C})$.
\end{theorem}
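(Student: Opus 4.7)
The strategy is to extend the argument of Theorem~\ref{T1}, replacing the second-symmetric-power ansatz \eqref{pm} (which was specific to order $2$) with the Burchnall--Chaundy elimination formula \eqref{psi1}, which is available precisely because we now have a compatible pair of commuting operators. The plan is, at any smooth point $(\lambda,\mu)$ of the spectral curve $W(\mu,\lambda)=0$, to perform the successive elimination of the derivatives $\Psi^{(k)}$ from the pair \eqref{A} down to the first-order relation $\Psi_x=G(\dpotU;\lambda,\mu)\,\Psi$ with $G$ rational in $(\lambda,\mu)$ and differentially polynomial in $\pot$. Every joint eigenfunction then takes the exponential-of-integral form $\Psi=\exp\!\int\!G\,dx$, which is a single Liouvillian step over $\N(\dpotU)$.

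Next, I would specialize to $\bL\Psi=\lambda\,\Psi$ at a generic fixed $\lambda$. The $n$ roots $\mu_1,\ldots,\mu_n$ of $W(\mu,\lambda)=0$ (distinct for generic $\lambda$) produce $n$ distinct exponents $G_j$ and thereby $n$ linearly independent solutions $\Psi_j=\exp\!\int\!G_j\,dx$; by \eqref{psi1} their derivatives are already rational multiples of the $\Psi_j$ themselves, so these alone generate the Picard--Vessiot extension of $\N(\dpotU)$. The same argument applied to $\bA\Psi=\mu\,\Psi$ at generic $\mu$ produces $m$ exponential solutions of the same shape. In both cases the PV-extension is a pure tower of exponential adjunctions, hence Liouvillian.

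To identify the Galois group, act by a differential automorphism $\sigma$ on $\Psi_j=\exp\!\int\!G_j\,dx$: since $G_j$ is $\sigma$-invariant, the ratio $\sigma(\Psi_j)/\Psi_j$ has zero logarithmic derivative, so $\sigma(\Psi_j)=\alpha_j\,\Psi_j$ with $\alpha_j$ a constant of the PV-field. This forces $\G\subseteq\mathrm{Diag}(\alpha_1,\ldots,\alpha_n)$, and the non-degeneracy of generic $(\lambda,\mu)$ rules out finite-order components exactly as in the closing step of the proof of Theorem~\ref{T1}: any algebraic $\Psi_j$ would contradict the single-valued theta-function formula \eqref{theta}, so the diagonal group is connected. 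The same reasoning applied to the $\bA$-equation yields a diagonal group of (a priori) possibly different size.

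The hard part, I expect, is justifying that the two Liouvillian extensions carry the \emph{same} transcendence degree. A naive count via Abel's identity yields $n-1$ for the $\bL$-extension and $m-1$ for the $\bA$-extension, so the stated equality must come from a finer algebro-geometric bookkeeping: the exponents $G_j$ are governed, via \eqref{theta}, by the single normalized second-kind Abelian integral $\II(\p)$ on the shared Riemann surface $\R$, and the independent exponential transcendentals adjoined to either PV-extension are drawn from the same pool of periods of $\R$. Making this precise requires (i) exhibiting the polynomial dependences among the $\exp\!\int\!G_j\,dx$ coming from the operator identity $W(\bA,\bL)=0$ together with the trace-type relations for $\II$, and (ii) showing that the resulting codimension in the ambient diagonal torus depends only on the curve $\R$ and not on which of $\lambda$ or $\mu$ is held fixed. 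This dimension count, rather than the Liouvillian or diagonal character of $\G$, is the main obstacle.
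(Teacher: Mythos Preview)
The paper gives no separate proof of this theorem; the sentence immediately preceding it (``By virtue of structure of the formula~\eqref{psi1} Theorem~\ref{T1} has a direct generalization'') is the entire argument, after which the text moves on. Your proposal is therefore already more detailed than what the paper supplies, and it follows exactly the line the paper gestures at: use the elimination recipe~\eqref{psi1} to write every joint eigenfunction as a single exponential-of-integral over $\N(\dpotU)$, observe that a differential automorphism can only rescale each $\Psi_j$, and import the connectedness step from the end of the proof of Theorem~\ref{T1}.

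Your worry about the phrase ``the same transcendence degree'' is well placed, and the paper does not resolve it either---there is nothing in the text corresponding to the dimension count you sketch via $\II(\p)$ on the shared surface~$\R$. The most charitable reading is that the author means both PV-extensions are built out of the \emph{same} first-order datum~\eqref{psi1} over the common constant field $\mathbb{C}(\lambda,\mu)$, not that a nontrivial equality $n-1=m-1$ is being asserted; but this is not made explicit. One small point you pass over: writing all $n$ exponents $G_j=G(\dpotU;\lambda,\mu_j)$ presupposes that every root $\mu_j$ of $W(\mu,\lambda)=0$ lies in the constant field, which in the hyperelliptic case of Theorem~\ref{T1} is automatic ($\mu_2=-\mu_1$) but for higher $n$ is an additional algebraic constant-field extension that should be stated.
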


Novikov's equations are known to be Hamiltonian systems integrable by
Liouville \cite{dik4}. However such a way of their integration is not
necessary since determination of the potential, \ie, construction of
the field $\N(\dpotU)$, is given by formulae following completely from
the `linear' Picard--Vessiot theory. It does not require Hamiltonians.

We finish this section with a digression to one remarkable example. It
is a Matveev 1-positon potential given by the seemingly elementary
formula \cite{posit}
\begin{equation}\label{positon}
u=-2\ln_{\mathit{xx}}\!\!\big\{\!\sin(a\,x+b)-a\,x-c \big\}\,.
\end{equation}
Surprisingly, in spite of its complete fitting into the integration
scheme above, it is not amenable to integration by means of any
classical algorithm in the Picard--Vessiot theory (Kovacic \cite{kov},
Singer \cite{singer}). Indeed, these algorithms are applicable to the
finite algebraic extensions of $\mathbb{C}(x)$, whereas this
$u\in\mathbb{C}(x,\re^{\ri (ax+b)},a,c)$.

\section{Spectral/quadrature duality.
An integration procedure\label{4}}
\subsection{Drach--Dubrovin equations\label{DD}} The quadrature Drach
approach gives a very simple explanation as to why and where the
fundamental polynomial
\begin{equation}\label{factor}
R(\dpotu;\lambda)=\big(\lambda-\gamma_1^{}(x)\big)\cdots
\big(\lambda-\gamma_g^{}(x)\big)
\end{equation}
comes from, what its roots $\gamma_k^{}$ are, and why these are
precisely the quantities that complete the indefinite
quadratures\footnote{An elementary explanation to appearance of these
objects (supplemented with Russian translation of works
\cite{49_1,49_2}) can be found in \cite{br0}.}. In what concerns the
spectral viewpoint, a remarkable result by Dubrovin \cite{7,8} is that
the quantities $\gk$ arise as zeroes of a $\Theta$-function since they
solve the inversion problem of Jacobi \cite{15,4,21}.

\begin{lemma}[Drach \cite{49_2}, Dubrovin \cite{7}]
Functions $\gk(x)$ satisfy the system of ODEs
\begin{equation}\label{dubrovin}
\frac{d\gk}{dx}=2\frac{\sqrt{(\gk-E_1)\cdots
(\gk-E_{2g+1})}}{\displaystyle\prod\limits_{j\ne k}(\gk-
\gamma_j^{})}\,,\qquad j,k=1,\ldots, g
\end{equation}
and  potential is determined by the trace formula \cite{17}
\begin{equation}\label{trace}
u=2\sum\limits_{k=1}^{g}\gk(x)-\sum\limits_{k=1}^{2g+1} E_k\,.
\end{equation}
\end{lemma}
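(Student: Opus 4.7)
The plan is to derive both statements directly from the algebraic identity \eqref{muR} together with the curve equation \eqref{hyper}, using nothing beyond the assumed factorization \eqref{factor}.

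First I would establish the Dubrovin equations. Write $R(x;\lambda)=\prod_{k=1}^g(\lambda-\gamma_k(x))$ and compute the partial derivative
$$R_x(x;\lambda)=-\sum_{j=1}^g \gamma_j'(x)\prod_{k\ne j}(\lambda-\gamma_k(x)).$$
Specialize at $\lambda=\gamma_m(x)$: every term in the sum vanishes except the one with $j=m$, giving $R_x(x;\gamma_m)=-\gamma_m'\prod_{k\ne m}(\gamma_m-\gamma_k)$, while $R(x;\gamma_m)=0$. Substituting $\lambda=\gamma_m$ into \eqref{muR} kills the terms proportional to $R$ and $RR_{xx}$, leaving
$$\mu(\gamma_m)^2=\tfrac14\,R_x(x;\gamma_m)^2 =\tfrac14\,\gamma_m'^2\prod_{k\ne m}(\gamma_m-\gamma_k)^2.$$
By the hyperelliptic relation \eqref{hyper}, $\mu(\gamma_m)^2=\prod_{j=1}^{2g+1}(\gamma_m-E_j)$; taking square roots and solving for $\gamma_m'$ yields precisely \eqref{dubrovin}.

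For the trace formula I would compare leading orders in $\lambda$ in \eqref{muR}. Writing $R=\lambda^g-\sigma_1\lambda^{g-1}+\cdots$ with $\sigma_1=\sum_k\gamma_k$, one has $R^2=\lambda^{2g}-2\sigma_1\lambda^{2g-1}+\cdots$, while the $x$-derivatives $R_x$ and $R_{xx}$ have $\lambda$-degree at most $g-1$, so $R\,R_{xx}$ and $R_x^2$ contribute only to orders $\lambda^{2g-1}$ and below. Consequently $(u+\lambda)R^2$ is the sole source of the $\lambda^{2g+1}$ and $\lambda^{2g}$ coefficients on the right of \eqref{muR}, contributing $\lambda^{2g+1}+(u-2\sigma_1)\lambda^{2g}+\cdots$. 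Matching with the left-hand side $\mu^2=\prod_{j=1}^{2g+1}(\lambda-E_j)=\lambda^{2g+1}-\bigl(\sum E_j\bigr)\lambda^{2g}+\cdots$ gives $u-2\sigma_1=-\sum E_j$, i.e.\ \eqref{trace}.

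Neither step is deep; the only delicate point is the choice of sign branch of the radical in \eqref{dubrovin}, which is consistent because the factorization \eqref{factor} makes $\gamma_m'$ well defined and the square-root sign is fixed by the chosen sheet of the curve \eqref{hyper}. The main conceptual obstacle, if any, is verifying that the $\lambda$-degree bounds used in the second step indeed hold; this follows because the leading coefficient of $R$ in $\lambda$ is the constant $1$, so all $x$-derivatives strictly lower the $\lambda$-degree, and this is guaranteed by the assumption that $R$ is a monic polynomial in $\lambda$ of degree $g$ with leading coefficient independent of $x$.
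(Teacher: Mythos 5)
Your proposal is correct and follows essentially the same route as the paper's (very terse) proof: substitute the factorization \eqref{factor} into \eqref{muR}, invoke \eqref{hyper}, and extract the identity coefficient-wise in $\lambda$ --- evaluation at $\lambda=\gamma_m$ giving \eqref{dubrovin} and the $\lambda^{2g}$ coefficient giving \eqref{trace}. You have merely filled in the details that the paper's one-line argument (``collecting the result in degrees $(\lambda-\gk)^n$'') leaves implicit, including the correct observation about the sign branch and the degree bounds on $R_x$, $R_{xx}$.
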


\begin{proof}
One inserts  \eqref{factor} into \eqref{muR} and takes \eqref{hyper}
into account. Collecting the result in degrees $(\lambda-\gk)^n$, one
requires identity under arbitrary $\lambda$. One gets \eqref{dubrovin}
and \eqref{trace}.
\end{proof}

\begin{remark}\label{R3}
We presented such a way of proof because it is applicable to higher
order operators and even spectral $\lambda$-pencils. The reason is that
the derivation of trace formulae is not evident  when generalizing.
Formulae of such a kind cease actually to be the `trace formulae'
because they have no longer Gel'fand's treatment of the operator trace
analogs. They are also not derivable directly from the definition of
$R$-polynomial like \eqref{factor}; illustrative counterexamples can be
found in work \cite{umn}.
\end{remark}

\subsection{Weierstrass theorem and new representation
for the $\Psi$-function} Main content of this section was briefly
announced in \cite{br3} and we shall present here the extensive proofs,
derivations, and precise correlation between spectral and quadrature
approaches.

\begin{theorem}\label{T3}
Let $u(x)$ be an FG-potential  corresponding to the arbitrary curve
\eqref{hyper}. Then solution to equation \eqref{1} is given by the
quadratures
\begin{equation}\label{final}
\Psi^{\pm}(x;\lambda)=\exp
\frac{1}{2}\mbig[10]\{\!\!\!\!
\int\limits^{\,\gamma_1^{}\!(x)}
\!\!\!\!\!\frac{w\pm\mu}{z-\lambda}\,\frac{dz}{w}\,\,\,\,+ \cdots +
\int\limits^{\,\gamma_g^{}\!(x)}
\!\!\!\!\!\frac{w\pm\mu}{z-\lambda}\,\frac{dz}{w}
\mbig[10]\}\;,
\end{equation}
where $w^2=(z-E_1)\cdots(z-E_{2g+1})$ and functions $\gk=\gk(x)$ are
determined through inversion of the set of indefinite integrals
\begin{equation}\label{jac}
\sum\limits_{k=1}^{g}\int\limits^{\;\gamma_k^{}}\!z^{g-1}\,\frac{d
z}{w} = 2\,x+a_g\,,\qquad
\sum\limits_{k=1}^{g}\int\limits^{\;\gamma_k^{}}\!z^n\,\frac{d z}{w}
= a_{n+1}\,,\quad n=0,1,\ldots,g-2\,.
\end{equation}
The FG-potential $u(x)$ is determined by formula \eqref{trace}.
\end{theorem}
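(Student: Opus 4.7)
The plan is to obtain \eqref{final} by transforming the exponent of the known ansatz \eqref{pm} via the factorization \eqref{factor} and Dubrovin's equations \eqref{dubrovin}; to extract \eqref{jac} from a weighted sum of \eqref{dubrovin} that decouples through a residue identity; and to read \eqref{trace} off from a single coefficient comparison in \eqref{muR}.

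For the $\Psi$-formula I find it cleanest to verify \eqref{final} by differentiating its right-hand side with respect to $x$ and matching the result with the integrand in \eqref{pm}. Using \eqref{factor} and the standard partial-fraction decomposition
\begin{equation*}
\frac{1}{R(\lambda)}=\sum_{k=1}^{g}\frac{1}{(\lambda-\gamma_k)\prod_{j\ne k}(\gamma_k-\gamma_j)},
\end{equation*}
together with \eqref{dubrovin}, one computes $R_x/(2R)=\sum_k w(\gamma_k)/[(\gamma_k-\lambda)\prod_{j\ne k}(\gamma_k-\gamma_j)]$. Differentiating the exponent of \eqref{final} in $x$ and invoking \eqref{dubrovin} once more yields the sum $\sum_k (w(\gamma_k)\pm\mu)/[(\gamma_k-\lambda)\prod_{j\ne k}(\gamma_k-\gamma_j)]$; splitting off the $\pm\mu$ piece and applying the identity above converts this into $R_x/(2R)\mp \mu/R=(R_x\mp 2\mu)/(2R)$, which is precisely the exponent of \eqref{pm}. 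The relabelling $\pm\leftrightarrow\mp$ between the two conventions is harmless because the pair $\Psi^\pm$ is defined only up to the sign choice of $\mu$ and the branch of $w$ at each $\gamma_k$.

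To derive \eqref{jac} I would multiply each Dubrovin equation by $\gamma_k^n/w(\gamma_k)$ and sum, getting
\begin{equation*}
\sum_{k=1}^{g}\frac{\gamma_k^n\,d\gamma_k}{w(\gamma_k)}=2\,dx\sum_{k=1}^{g}\frac{\gamma_k^n}{\prod_{j\ne k}(\gamma_k-\gamma_j)},\qquad n=0,1,\dots,g-1.
\end{equation*}
The right-hand sum is the sum of finite residues of $\lambda^n/R(\lambda)$; by the vanishing of the total residue of a rational function it equals minus the residue at infinity, and is hence $0$ for $n\le g-2$ and $1$ for $n=g-1$. Integrating in $x$ yields exactly the system \eqref{jac}, whose solvability for the $\gamma_k(x)$ is the classical Jacobi inversion on the curve \eqref{hyper}. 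Finally, expanding $R^2$ and $\mu^2$ in powers of $\lambda$ in \eqref{muR} and noting that $R R_{xx}$ and $R_x^2$ have $\lambda$-degrees at most $2g-1$ and $2g-2$ respectively, the coefficient of $\lambda^{2g}$ gives $u=2\sum_k\gamma_k-\sum_j E_j$, which is \eqref{trace}.

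The main obstacle is sign bookkeeping: the branch of $w(\gamma_k)$ implicit in \eqref{dubrovin}, the sheet of $\mu$ on \eqref{hyper}, and the labelling $\Psi^\pm$ are mutually tied, so that each flip in one forces flips in the others. Once these conventions are pinned down, the rest of the argument is direct algebra with the residue identity as its only non-trivial input.
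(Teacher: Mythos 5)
Your proposal is correct and follows essentially the same route as the paper: the passage between \eqref{pm} and \eqref{final} is the same computation (the paper changes the integration variable $x\to z$ in \eqref{pm} using \eqref{factor} and \eqref{dubrovin}, you run it backwards by differentiating \eqref{final}), your residue identity is exactly the ``symmetrization'' of the Dubrovin equations that the paper invokes to obtain \eqref{jac}, and your coefficient comparison in \eqref{muR} reproduces the trace formula as in the Drach--Dubrovin lemma. The sign caveat you flag is handled identically in the paper by the free choice of sheet $\mu\mapsto-\mu$.
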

\begin{proof}
Let us substitute \eqref{factor} into \eqref{pm} and change the
integration variable $x$ to $z$. We then obtain the following rules
\begin{alignat*}{3}
\frac{R_x}{R}\,dx&=d\ln\prod_k(\lambda-\gk)&&\quad\dashrightarrow\quad
\frac{1}{z-\lambda}\,dz\,,\\[0.3em]
\frac{2\,\mu}{R}\,dx&=
\frac{2\,\mu}{\ds{\prod}_{j}(\lambda-\gamma_j^{})}\cdot
\frac12\,\frac{d\gk}{\varrho_k^{}}\, \prod\limits_{j\ne
k}(\gk-\gamma_j^{})&&\quad\dashrightarrow\quad
\frac{-\mu}{z-\lambda}\,\frac{dz}{w}\,,
\end{alignat*}
wherein $\varrho_k^2=(\gk-E_1)\cdots(\gk-E_{2g+1})$.  Abelian integrals
of 3rd kind, as appeared in \eqref{final}, result from differential
equations \eqref{dubrovin}. Furthermore, substitution of expression
\eqref{final} into equation \eqref{1} leads, to get an identity, to
formula \eqref{trace}; this can serve as yet another way of derivation
of the trace formula. Symmetrizing  right hand sides of equations
\eqref{dubrovin}, we rewrite them down in a form that admits an
application of the indefinite integration operations, that is
\eqref{jac}. This set determines $\gk$ as functions of $x$.
\end{proof}

\begin{corollary}\label{C2}
In $u(x)$-representation the extension $\Nu\langle \Psi^{\pm}\rangle$
requires the one quadrature \eqref{pm}. If formula \eqref{final} is
used then extensions $\Nu\langle\Psi^{\pm}\rangle$ consist in an
adjunction of a symmetric sum of the logarithmic Abelian integrals.
\end{corollary}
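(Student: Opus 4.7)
The two assertions are nearly immediate from the formulas \eqref{pm} and \eqref{final} combined with what has already been established about $\Nu$ and $R$. For the first, my plan is to inspect the integrand of \eqref{pm}: writing $\Psi^+ = \exp\!\int^{x}\! Q\,dx$ with $Q = (R_x + 2\mu)/(2R)$, I observe that $R(\dpotu;\lambda)\in\Nu$ by the very definition of an FG-potential, that $R_x\in\Nu$ by differentiation, and that $\mu\in\mathbb{C}(\lambda,\mu)$ lies in the field of constants. Hence $Q\in\Nu$, and adjoining $\Psi^+$ is a single exponential-integral step. The companion $\Psi^- = R/\Psi^+$ and the derivatives $\pow{\Psi}{x}{\pm}$ are then given by the algebraic relations \eqref{temp}, so they lie in $\Nu(\Psi^+)$ without any further quadrature.

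For the second statement I would split the integrand of \eqref{final} into an elementary and a genuinely transcendental piece,
$$
\frac{w\pm\mu}{z-\lambda}\,\frac{dz}{w} \;=\; \frac{dz}{z-\lambda}\;\pm\;\frac{\mu\,dz}{(z-\lambda)\,w}\,.
$$
Summed over $k$, the first piece integrates to $\sum_k \ln(\gk-\lambda) = \ln\prod_k(\gk-\lambda) = \ln R(x;\lambda)$, which sits in a trivial extension of $\Nu$ (its exponential is already in $\Nu$ by \eqref{factor}). Thus the only non-elementary contribution is the symmetric sum
$$
\sum_{k=1}^{g}\int^{\,\gk(x)}\!\!\frac{\mu\,dz}{(z-\lambda)\,w}\,,
$$
an Abelian integral of the third kind on the hyperelliptic curve \eqref{hyper} with logarithmic singularities at the two preimages of $z=\lambda$. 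Adjoining $\Psi^\pm$ in this representation therefore amounts to adjoining this single symmetric sum of logarithmic Abelian integrals, matching the single quadrature of the first part.

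The one point I expect to require care is that individual $\gk(x)$ need not lie in $\Nu$---they are only algebraic over $\Nu$, being roots of $R(\dpotu;\lambda)$---so the symmetrization is essential: only after summing over $k$ does the expression become a rational function of the elementary symmetric functions of the $\gk$'s and hence well-defined over $\Nu$. I would make this rigorous by invoking the Drach--Dubrovin system \eqref{dubrovin}, which supplies $d\gk/dx$ in a form compatible with the symmetric substitution $x\mapsto z$ already used in the proof of Theorem~\ref{T3} to convert \eqref{pm} into \eqref{final}. Once this is spelled out, no further obstacle remains; the corollary is simply a bookkeeping statement about which field element is adjoined in each of the two equivalent representations \eqref{pm} and \eqref{final}.
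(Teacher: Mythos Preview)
Your argument is correct and matches what the paper leaves implicit: the corollary is stated without a separate proof, as an immediate consequence of Theorem~\ref{T1} (transcendence degree~1 and the relations \eqref{temp}) together with Theorem~\ref{T3} (the passage from \eqref{pm} to \eqref{final}). Your splitting of the integrand into $\frac{dz}{z-\lambda}+\frac{\pm\mu\,dz}{(z-\lambda)\,w}$ is a mild refinement---the paper treats the undecomposed differential $\frac{w\pm\mu}{z-\lambda}\,\frac{dz}{w}$ itself as the logarithmic Abelian differential, with poles at $(\lambda,\pm\mu)$ and at the branch place $\infty$ (cf.\ the proof of Theorem~\ref{T4})---but the conclusion is identical, and your remark on the necessity of symmetrizing over the~$\gk$ is well taken.
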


\begin{remark}[{\sc Definition}]\label{R4}
Special attention must be given to the fact that integrability of the
`linear $\Psi$' is also algebraic (hyperelliptic) as is nonlinear
integrability of $\Nu$. More precisely, in what follows \textit{the
term `algebraic' will mean that ultimate answers contain finitely many
indefinite integrals of algebraic functions and inversions of the
formers}.
\end{remark}

As for transition from primary $\lambda$-dependence to the $x$-one and
vice versa, the duality between spectral and quadrature representations
is nontrivial; it is characterized by the following statement.

\begin{theorem}\label{T4}
Quadrature and spectral approaches are equivalent in the sense that the
explicit transition between formula \eqref{final} and its spectral
counterpart $($see below\/$)$ is realized through the Weierstrass
theorem on permutation of limits and parameters in a normalized Abelian
integral of  third kind.
\end{theorem}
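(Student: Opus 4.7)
The plan is to show that the product of quadratures on the right-hand side of \eqref{final} collapses, after applying the classical reciprocity law, to the Its--Matveev--Akhiezer quotient \eqref{theta}. The first step is to recognize the kernel inside the exponent of \eqref{final},
\[
\frac{w\pm\mu}{z-\lambda}\,\frac{dz}{w}\,,
\]
as a meromorphic differential on the hyperelliptic curve $w^2=\prod_k(z-E_k)$. A direct residue computation shows that, viewed on the curve, it carries only two logarithmic singularities: one at the point $\p^{\pm}=(\lambda,\pm\mu)$ lying over $\lambda$, and a compensating one at $\p_{\!\!\infty}$. Hence, up to holomorphic corrections that will be absorbed when we renormalize to zero $\boldsymbol{\mathfrak{a}}$-periods, the kernel coincides with (twice) a third-kind differential $\omega_{\p^{\pm}\!,\,\p_{\!\!\infty}}^{}$.

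The second step is to apply the Weierstrass permutation theorem: for normalized third-kind differentials one has
\[
\int_{Q_1}^{Q_2}\omega_{P_1,P_2}^{}\,-\,\int_{P_1}^{P_2}\omega_{Q_1,Q_2}^{}\,\in\,2\pi\ri\,\mathbb{Z}\langle\boldsymbol{\mathfrak{b}}\text{-periods}\rangle\,.
\]
Applied to each summand in \eqref{final}, this identity interchanges the $x$-dependent upper limits $\gk(x)$ with the $\lambda$-dependent parameters $\p^{\pm},\p_{\!\!\infty}$: the upper limit becomes a single point lying over $\lambda$, while the logarithmic singularities migrate to the divisor $\sum_k\gk$. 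The sum over $k$ then assembles into \emph{one} Abelian integral whose argument is $\p$ and whose parameters encode the positions $\gk(x)$ symmetrically.

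The third step invokes the classical Riemann expression for a normalized integral of the third kind,
\[
\int^{\!\p}\!\omega_{R,S}^{}\,=\,\ln\!\frac{\Theta\!\left(\boldsymbol{\mathfrak{U}}(\p)-\boldsymbol{\mathfrak{U}}(R)-\boldsymbol{K}\right)}{\Theta\!\left(\boldsymbol{\mathfrak{U}}(\p)-\boldsymbol{\mathfrak{U}}(S)-\boldsymbol{K}\right)}\,+\,\mathrm{const}\,,
\]
together with the Jacobi inversion \eqref{jac}, which identifies $\sum_{k=1}^{g}\boldsymbol{\mathfrak{U}}(\gk)$ with $x\,\boldsymbol{U}+\boldsymbol{D}$ modulo the period lattice. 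Feeding the symmetric dependence on $\sum_k\boldsymbol{\mathfrak{U}}(\gk)$ through these two substitutions produces exactly the $\Theta$-quotient with argument $x\,\boldsymbol{U}+\boldsymbol{D}+\boldsymbol{\mathfrak{U}}(\p)$ seen in \eqref{theta}. The contribution attached to $\p_{\!\!\infty}$, handled analogously through the normalized second-kind integral $\II(\p)$, supplies the exponential factor $\re^{\II(\p)\,x}$, thus recovering the full Baker--Akhiezer ansatz.

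The principal technical obstacle I anticipate is the bookkeeping of normalization constants and lattice ambiguities: the indefinite quadratures in \eqref{final} each conceal an integration constant, the Jacobi inversion is defined only modulo the period lattice, and the Weierstrass reciprocity itself is an equality only modulo $2\pi\ri\langle\boldsymbol{\mathfrak{b}}\text{-periods}\rangle$. Reconciling all three of these $\mathbb{Z}$-valued ambiguities so that the right-hand side of \eqref{theta} is genuinely single-valued in $\p\in\R$---as opposed to single-valued only up to a quasi-periodicity already absorbed into $\re^{\II(\p)\,x}$---is the delicate part of the identification. The remainder of the argument amounts to an essentially algebraic dictionary between the Drach quadrature formula and the Baker--Akhiezer recipe, establishing the claimed equivalence.
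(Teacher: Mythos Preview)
Your first two steps---identifying the kernel as a third-kind differential with poles at $\p^{\pm}$ and $\p_{\!\!\infty}$, then invoking the Weierstrass permutation to swap the $\gamma_k(x)$-limits with the $(\lambda,\mu)$-parameters---are exactly what the paper does. The paper phrases the permutation as leaving a holomorphic remainder rather than pre-normalizing the $\boldsymbol{\mathfrak{a}}$-periods, but this is cosmetic.

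Where you diverge is in the \emph{target}. The ``spectral counterpart'' the theorem refers to is not the $\Theta$-function formula \eqref{theta}; it is the Its--Matveev integral representation \eqref{dual}--\eqref{a3}, in which the $\Psi$-function is written as an exponential of Abelian integrals taken in $\lambda$ with $x$-dependent logarithmic singularities at the $\gamma_k(x)$. The paper's proof stops precisely there: once the permutation has converted \eqref{final} into \eqref{dual} plus a holomorphic piece, the theorem is established. Your third step---Riemann's $\Theta$-expression for a normalized third-kind integral combined with Jacobi inversion to produce \eqref{theta}---is the content of the \emph{next} theorem (Theorem~\ref{T5}), whose proof the paper outsources to \cite{br3}. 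So your proposal is not wrong, but it over-proves: you have folded Theorems~\ref{T4} and~\ref{T5} into one argument and aimed at the wrong endpoint for this particular statement. Truncate after step two and you have the paper's proof.
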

\begin{proof}
Sub-exponential expression in \eqref{final} is a sum of Abelian
integrals, each with logarithmic singularities on $\R$ at two points:
$(z,w)=(\lambda,+\mu)$ and branch place $(z,w)=(\infty,\infty)$.
According to a Weierstrass theorem, we may exchange singularities of an
elementary logarithmic integral with its limits \cite{baker,clebsch,4}.
In our case, these are $(z,w)=(\gamma(x),\varrho(x))$ and a lower limit
$(z,w)=(\alpha,+\beta)$, where
$\beta^2=(\alpha-E_1)\cdots(\alpha-E_{2g+1})$. More precisely,
switching the places
$$
\left\{\begin{matrix}\big(\gamma(x),\varrho(x)\big)\\(\alpha,\beta)
\end{matrix}\right\}
\quad\rightleftarrows\quad \left\{
\begin{matrix}(\lambda,\mu)\\(\infty,\infty) \end{matrix}\right\},
$$
we obtain that the difference
$$
\int\limits_\alpha^{\;\gamma(x)}\!\!\!\!\!
\frac{w+\mu}{z-\lambda}\,\frac{d
z}{w}- \int_{\infty}\limits^{\;\lambda}\!
\left\{\frac{w+\varrho(x)}{z-\gamma(x)}-
\frac{w+\beta}{z-\alpha}\right\}
\frac{dz}{w}=\cdots
$$
is to be everywhere finite quantity, that is  certain holomorphic
integral:
$$
\cdots=\int\limits^{\;\gamma(x)}\!\!\!\!\!\!\!
\mbig[3]\{A_1(\lambda)+A_2(\lambda)\,z+\cdots+
A_g(\lambda)\,z^{g-1}\mbig[3]\}\,\frac{dz}{w}\,.
$$
Sum of $g$ such quantities must be a holomorphic integral depending
symmetrically on $\gamma$'s. Expression \eqref{final} is thus converted
to its dual object
\begin{equation}\label{dual}
\Psi^{\pm}\simeq\exp \frac{1}{2}
\mbig[10]\{\!\!
\int\limits^{\;\lambda}
\!\frac{w\pm\varrho_1^{}\!(x)}
{z-\gamma_{1}^{}\!(x)}\,
\frac{dz}{w}
\,\,\,\,+ \cdots + \int\limits^{\;\lambda}
\!\frac{w\pm\varrho_g^{}(x)}{z-\gamma_g(x)}\,\frac{dz}{w}
+\mathrm{holomorhic}(\lambda,x)\!\!\mbig[10]\}\,.
\end{equation}
This is nothing else but the spectral formula by Its \& Matveev
\cite{21}   deserving to be mentioned more often. We
reproduce\footnote{We have not found mention of this important result
in the literature. See also formula (3) in Akhiezer's work \cite{15}.}
their result as it has been written in \cite[p.~351]{21}:
\begin{equation}\label{a1}
\omega(\lambda)=\int\limits_{\beta_n}^{\;\lambda}\!\!
\frac{M(\lambda)}{2\,\sqrt{p(\lambda)}}\,d\lambda,\qquad
\int\limits_{\beta_{j-1}}^{\,\alpha_j}\!\!\!\!\!d\omega(\lambda)
=0,\qquad j=1,\ldots,n\,,
\end{equation}
where
$M(\lambda)=\lambda^n+a_1\lambda^{n-1}+\ldots+a_n$ and
\begin{equation}\label{a2}
\omega_k^{}(\lambda)=\int\limits_\infty^{\;\lambda}\!\!
\bigg(
\mfrac{\sqrt{P(\lambda)}+\sqrt{P(\lambda_k(x))}}{\lambda-\lambda_k(x)}
-\mfrac{\sqrt{P(\lambda)}+\sqrt{P(\lambda_k(0))}}{\lambda-\lambda_k(0)}
+M_k(\lambda)\bigg)\mfrac{d\lambda}{2\,\sqrt{\smash[b]{P(\lambda)}}}\,,
\end{equation}
\begin{equation}\label{a3}
\psi(x,\lambda)=\exp\!\bigg(\ri\,x\,\omega(\lambda)+
\sum\limits_{k=1}^n\omega_k(\lambda)\bigg).
\end{equation}
Meaning of all the quantities presented in \eqref{a1}--\eqref{a3} and
transition  \eqref{final} $\rightleftarrows$ \eqref{a3} are obvious
from the context. To put it differently, the permutation theorem of
Weierstrass, regarding inverse transition $\eqref{a3}\rightarrow
\eqref{final}$, is a way of doing normalization of periods of these
integrals so that all the  parameters in the spectral formulae
\eqref{a1}--\eqref{a3} can be `dumped' to a common multiplication
constant for the $\Psi$. By this means the \textit{non-indefinite}
integrals \eqref{dual} or \eqref{a2} with a parametrical dependence on
transcendental functions $\gk(x)$ turn into the  \textit{indefinite}
ones \eqref{final} of an algebraic function.
\end{proof}

Description of invariant property of equation \eqref{1} to be
integrable has been completed and we conclude the section with the
comments about fundamental difference between spectral and quadrature
modes of getting the formulae.

\subsection{On a Riemann surface}
At this point not only do analysis on Riemann surfaces does
not come into play, but also the surfaces themselves do not appear. One
has just a designation
$$
\mu\DEF\sqrt{(\lambda-E_1) \cdots (\lambda-E_{2g+1})}
$$
and the theory consists of elementary substitutions (see footnote on
p.~\pageref{kov}). On the other hand, verification of spectral formulae
\eqref{a1}--\eqref{a3} is a highly nontrivial task since they contain a
complete set of transcendental objects of Riemann's theory of Abelian
integrals and differentiation of an integrand containing $\gamma$'s. In
this respect, not using the permutation theorem, the `spectral
integral' in \eqref{dual}, that is
$$
\int\limits^{\;\lambda}_\infty
\!\frac{w\pm\varrho(x)}{z-\gamma(x)}\,\frac{dz}{w}\,,
$$
would be very akin to an integral representation of any special
function, say, complete elliptic Legendre's integral
$$
K(x)=\int\limits_0^{\;1}\!\!\!\!
\frac{dz}{\sqrt{(1-z^2)(1-x^2z^2)\,}}\,.
$$
The latter is not expressible by means of  any finite Liouvillian
extension over $\mathbb{C}(x)$ since $K(x^2)$ satisfies a 2nd order
irreducible ${}_2F_1\mbig[3](\frac12,\frac12;1\big|x^2
\mbig[3])$-hypergeometric equation \cite{ell}.

\begin{remark}[history]\label{R5}
Both the integrabilities are due to Liouville but chronologically,
`linear integrability' (1830--40s) was preceded by more famous
nonlinear integrability of Hamiltonian systems (1840--50s). Despite the
numerous modern literature, the fact that these two kinds of
integrability are non-casually related to the one name Liouville was
first observed by Morales-Ruiz \cite[pp.~51--52]{ispanec}.
\end{remark}

\section{The $\Theta$-functons\label{5}}
By virtue of the fact that extension $\Nu\mbig[0]\langle\Psi^{\pm}
\mbig[0]\rangle$ is transcendental, analytic representations for the
previous formulae require introducing new functions. These are the
$\Theta$-series \eqref{series} involved to the theory by Matveev and
Its in their famous work \cite{21}.

\begin{theorem}\label{T5}
The $\Theta$-function representations \eqref{theta},  \eqref{its} are
deducible from quadrature \eqref{final}. The expressions \eqref{theta}
and \eqref{final} are proportional to each other.
\end{theorem}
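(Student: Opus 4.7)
The plan is to evaluate the quadratures in \eqref{final} as normalized Abelian integrals of third kind on the hyperelliptic Riemann surface \eqref{hyper} and then invoke the classical Riemann formula expressing such integrals through the theta function $\Theta$. First I identify the argument of $\Theta$ through Jacobi inversion; then I convert the exponent of \eqref{final} into a $\Theta$-ratio via the third-kind integral identity; finally, I derive \eqref{its} from the trace formula \eqref{trace}.

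\textbf{Step 1 (Jacobi inversion).} The defining relations \eqref{jac} for the $\gk(x)$ are precisely the Jacobi inversion problem on $\R$: they assert that the symmetric sum $\sum_{k=1}^{g}\boldsymbol{\mathfrak{U}}(\gk(x))$ is a linear function of $x$ of the shape $x\,\boldsymbol{U}+\boldsymbol{D}$, where the direction vector $\boldsymbol{U}$ matches the one appearing in \eqref{theta} (its components are read off from the highest power $z^{g-1}$ in \eqref{jac}) and $\boldsymbol{D}$ absorbs the lower limits of integration. This pins down the argument of $\Theta$ in \eqref{theta} as a quantity computed from the $\gk$'s.

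\textbf{Step 2 (third-kind integrals via $\Theta$).} For each $k$ the integrand $\frac{w+\mu}{z-\lambda}\frac{dz}{w}$ in \eqref{final} is, up to a holomorphic correction, the canonical third-kind differential on $\R$ with residues $+1,-1$ at the pair of points $\p=(\lambda,+\mu)$ and $\p_{\!\!\infty}$. The classical Riemann identity expresses $\int^{\gk}\!\Omega_{\p,\p_{\!\!\infty}}$ as a difference of logarithms of $\Theta$'s evaluated at $\boldsymbol{\mathfrak{U}}(\gk)$ shifted by $\boldsymbol{\mathfrak{U}}(\p)$ and by $\boldsymbol{\mathfrak{U}}(\p_{\!\!\infty})$, with the Riemann-constant shift. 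Summing over $k$ and substituting the outcome of Step~1, the aggregate $\sum_k\boldsymbol{\mathfrak{U}}(\gk)$ is replaced by $x\,\boldsymbol{U}+\boldsymbol{D}$, whence the exponent in \eqref{final} collapses into a logarithm of the ratio $\Theta\bigl(x\,\boldsymbol{U}+\boldsymbol{D}+\boldsymbol{\mathfrak{U}}(\p)\bigr)/\Theta(x\,\boldsymbol{U}+\boldsymbol{D})$. The piece coming from the second (infinite) pole, regularized as the normalized second-kind integral, produces exactly the essential-singularity factor $\re^{\II(\p)\,x}$ of \eqref{theta}.

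\textbf{Step 3 (Its--Matveev).} From Step~1, $x\,\boldsymbol{U}+\boldsymbol{D}=\sum_k\boldsymbol{\mathfrak{U}}(\gk(x))$. Differentiating twice in $x$ along $\boldsymbol{U}$ and using the chain rule together with \eqref{dubrovin}, I obtain that $\partial_x^2\ln\Theta(x\,\boldsymbol{U}+\boldsymbol{D})$ equals $-\sum_k\gk(x)$ up to an additive constant depending only on the branch points $E_j$. Combining with \eqref{trace} gives \eqref{its}.

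\textbf{Main obstacle.} The delicate point is the bookkeeping of normalizations: the Riemann constant vector, the holomorphic correction explicitly acknowledged as \emph{holomorhic}$(\lambda,x)$ in \eqref{dual}, sign conventions for $\mu$ and $w$, and the precise normalization of $\boldsymbol{\mathfrak{U}}$ and $\II$, must all be tracked so that the proportionality factor between \eqref{theta} and \eqref{final} is genuinely $x$-independent. The cleanest way to close the argument is to observe that the ratio of the two sides is a globally single-valued, nowhere-vanishing, bounded function on $\R$ in $\p$ (both sides share the same essential singularity at $\p_{\!\!\infty}$ and the same divisor structure of zeros/poles determined by the $\gk$), so by the maximum principle it must be constant in $\p$; its $x$-independence follows from matching the quasi-periodicities of $\Theta$ with those of the exponentiated Abelian integrals in \eqref{final}.
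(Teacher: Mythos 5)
Your proposal is correct and follows essentially the route the paper intends: the paper gives no in-text proof, deferring to \cite[\S\,7]{br3}, but it explicitly describes that derivation as the consecutive passage from \eqref{final} to \eqref{theta} in which the aggregates $\II(\p)$, its periods $\boldsymbol{U}$, and the Abel map $\boldsymbol{\mathfrak{U}}(\p)$ appear deductively --- exactly what your Steps 1--2 produce via Jacobi inversion of \eqref{jac} and the third-kind/$\Theta$ identity, with Step 3 recovering \eqref{its} from the trace formula. The only point to keep explicit in a full write-up is the Riemann bilinear relation identifying the direction vector from \eqref{jac} with the $\boldsymbol{\mathfrak{b}}$-periods of $\II$, which your Step 1 uses tacitly.
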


Proof of this theorem and consecutive derivation of formula
\eqref{theta} from \eqref{final} have been detailed in
\cite[\S\,7]{br3}. An important point here is the deductive appearance
of all the aggregates of  formula \eqref{theta} when it is viewed as an
axiomatic one: the integral $\II(\p)$, its periods $\boldsymbol{U}$,
and the Abel map $\boldsymbol{\mathfrak{U}}(\p)$.

From this theorem, we may draw the conclusion that when recognizing the
integrability of linear equations  the $\Theta$-functions themselves
are not necessary. They realize a step to be considered as the next one
after emergence of an integral symbol~$\int$ in \eqref{pm} and
\eqref{jac}.

\subsection{Some differential properties of theta-functions}
Theorem~\ref{T5} suggests a search for representation of the field
$\Nu$ by means of $\Theta$-functions. To do this require some
differential properties of $\Theta$-functions and we show further that
they are available. All the FG-theory tells us that Abelian and
BA-functions satisfy certain differential identities. By these
identities are meant the fact that Abelian functions, as theta-function
ratios of linear sections of $g$-dimensional jacobians of curves, have
a lot of differential relations between themselves and many of such
relations have forms of known integrable PDEs \cite{BEL, GH}. Adding
here exponential functions of the BA-type, we involve into analysis
$(\bL, \,\bA)$-pairs for these PDEs. Moreover, let FG-potential be
expressible through the $\theta$-functions of Jacobi. Then not only do
Abelian and BA-functions satisfy certain differential identities but
$\theta$-functions themselves also satisfy some ODEs.

Denote  by $\theta\AB{\varepsilon}{\delta}$  the standard
$\theta$-series of Jacobi with characteristics $(\varepsilon,\delta)$
\cite{ell}:
\begin{equation}\label{jacobi}
\theta\AB{\varepsilon}{\delta}(x|\tau)\DEF
\sideset{}{_k}\sum\limits_{-\infty}^{\infty}\!\!
\re_{\mathstrut}^{\pi \ri
\left(\!k+\frac\varepsilon2\!\right)^{\!2}\tau+ 2\pi
\ri\left(\!k+\frac\varepsilon2\!\right)
\left(\!z+\frac\delta2\!\right)}\,,
\end{equation}
\ie, $\theta\AB{1}{1}=-\theta_1$, $\theta\AB{1}{0}=\theta_2$,
$\theta\AB{0}{0}=\theta_3$, $\theta\AB{0}{1}=\theta_4$. Let
$\vartheta\DEF\theta(0|\tau)$ be corresponding $\vartheta$-constants
and $\Dtheta(x|\tau)$ stands for $x$-derivative  of the series
$-\theta\AB{1}{1}(x|\tau)$. Period of the meromorphic elliptic integral
is denoted by $\eta=\zeta(1|1,\tau)$.

\begin{theorem}\label{T6}
Jacobian functions $\theta\AB{\varepsilon}{\delta}$, $\Dtheta$ with
arbitrary integral characteristics are differentially closed over the
field of the $(\vartheta^2,\eta)$-constants and satisfy the autonomous
ODEs
\begin{equation}\label{new}
\left\{
\begin{aligned}
\frac{\partial\theta\AB{\varepsilon}{\delta}}{\partial x}&=
\frac{\Dtheta}{\theta\AB{1}{1}} \,
\theta\AB{\varepsilon}{\delta}-
(-1)^{\vcenter{\hbox{\Tiny$\Big[\!$}}\frac{\delta}{2}
\vcenter{\hbox{\Tiny$\!\Big]$}}
\varepsilon}_{\mathstrut}\,
\pi\,\vartheta\AB{\varepsilon}{\delta}^2\!\cdot\!
\frac{\theta\AB{\varepsilon\smin1}{0}\, \theta\AB{0}{\delta\smin1}}
{\theta\AB{1}{1}}\\[0.2em]
\frac{\partial\Dtheta}{\partial x}&=
\frac{\Dtheta^2}{\theta\AB{1}{1}}-\pi^2
\vartheta\AB{0}{0}^2\,\vartheta\AB{0}{1}^2\!\cdot\!
\frac{\theta\AB{1}{0}^{2\ds\mathstrut}}{\theta\AB{1}{1}}-
4\,\mbig[7]\{\eta+\frac{\pi^2}{12}\big(\vartheta\AB{0}{0}^4+
\vartheta\AB{0}{1}^4\big)\mbig[7]\} \!\cdot\!
\theta\AB{1}{1}
\end{aligned}\right.,
\end{equation}
where $\big[\frac{\delta}{2}\big]$ signifies an integer part of the
number $\delta/2$.
\end{theorem}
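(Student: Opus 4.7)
My plan is to verify the two ODEs in \eqref{new} directly by reducing each to a classical identity for Jacobi theta-functions; the differential-closedness statement then follows automatically from the form of the right-hand sides.

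For the first ODE I will multiply through by $\theta\AB{1}{1}$ and use the definition $\Dtheta=\partial_x(-\theta\AB{1}{1})$ to rewrite the identity as a bilinear (Wronskian-type) relation of the schematic form
$$\partial_x\!\bigl(\theta\AB{1}{1}\,\theta\AB{\varepsilon}{\delta}\bigr)=c_{\varepsilon,\delta}\,\pi\,\vartheta\AB{\varepsilon}{\delta}^2\,\theta\AB{\varepsilon-1}{0}\,\theta\AB{0}{\delta-1}$$
(or the analogous Wronskian, depending on the sign convention for $\Dtheta$), which for each of the three non-degenerate characteristics $(\varepsilon,\delta)\in\{(0,0),(0,1),(1,0)\}$ is precisely one of the four classical Jacobi derivative formulas of type $\theta_i'\theta_j-\theta_i\theta_j'=\pi\vartheta_k^2\theta_k\theta_l$ on the permutations $\{i,j,k,l\}$ of $\{1,2,3,4\}$. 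I will verify each case by the standard three-step comparison: (i) match both sides at $x=0$ using Jacobi's identity $\vartheta_1'=\pi\,\vartheta_2\vartheta_3\vartheta_4$; (ii) check that both sides have the same parity in $x$; (iii) check that both sides transform identically under the two quasi-period shifts $x\mapsto x+1$ and $x\mapsto x+\tau$. Both sides are then entire theta-type functions with the same zero divisor and multiplicative quasi-periodicity, so equality at any single point forces equality as functions.

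For the second ODE I will divide through by $\theta\AB{1}{1}^2$ and recognize the combination $\partial_x\Dtheta/\theta\AB{1}{1}-\Dtheta^2/\theta\AB{1}{1}^2$ as $\pm\partial_x^2\ln\theta\AB{1}{1}$. The proof then reduces to the Weierstrass--Jacobi correspondence
$$\zeta(x)=\frac{\partial_x\theta_1(x|\tau)}{\theta_1(x|\tau)}+2\eta\,x,\qquad \wp(x)=-\partial_x\zeta(x),$$
which gives $\partial_x^2\ln\theta_1=-\wp-2\eta$, combined with the classical pole-matching identity
$$\wp(x)-e_1=\pi^2\,\vartheta_3^2\,\vartheta_4^2\Bigl(\frac{\theta_2(x)}{\theta_1(x)}\Bigr)^{\!2}$$
and the theta-null value $e_1=\frac{\pi^2}{3}\bigl(\vartheta_3^4+\vartheta_4^4\bigr)$, itself extracted from the differences $e_i-e_j=\pi^2\,\vartheta_k^4$ together with $e_1+e_2+e_3=0$. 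Substituting these into $\partial_x^2\ln\theta\AB{1}{1}=-\wp-2\eta$ and clearing denominators produces the stated ODE for $\Dtheta$.

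Once both scalar ODEs are in place, the right-hand sides are rational expressions in the finite set $\{\theta\AB{\varepsilon}{\delta},\Dtheta\}$ with coefficients in $\mathbb{C}(\vartheta^2,\eta)$; together with the trivial relation $\partial_x\theta\AB{1}{1}=-\Dtheta$ this shows that the differential ring generated over $\mathbb{C}(\vartheta^2,\eta)$ by these functions is closed under $\partial_x$, which is exactly the differential closedness claimed. The main difficulty I foresee is not conceptual but bookkeeping: pinning down the sign of $\Dtheta$ relative to $\theta_1'$, the precise value of $(-1)^{[\delta/2]\varepsilon}$ when the characteristic is reduced modulo $2$, and the Weierstrass--Jacobi conversion of $\eta$ and $e_k$ to the chosen lattice $(1,\tau)$. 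Once these conventions are fixed---most safely by verifying one representative case at $x=0$ in each parity class---the remaining identities fall out mechanically.
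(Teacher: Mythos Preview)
The paper does not actually prove Theorem~\ref{T6}: immediately after the statement it simply says ``These formulae are consequences of more general differential properties of Jacobi's functions briefly tabulated in \cite{brP6}.'' So there is no in-paper argument to compare against, and your proposal supplies what the paper outsources.

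Your route is sound and is exactly how one would verify such identities from first principles. A couple of remarks. First, when you clear the denominator in the first ODE and substitute $\Dtheta=\partial_x(-\theta\AB{1}{1})=\theta_1'$ together with $\theta\AB{1}{1}=-\theta_1$, the left-hand side becomes $\theta_1'\,\theta\AB{\varepsilon}{\delta}-\theta_1\,\theta\AB{\varepsilon}{\delta}'$, i.e.\ the Wronskian rather than the product derivative; your parenthetical hedge is the correct alternative, and this is precisely the classical triple $\theta_1'\theta_k-\theta_1\theta_k'=\pi\,\vartheta_k^2\,\theta_m\theta_n$ for $\{k,m,n\}=\{2,3,4\}$. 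The product-derivative form fails the $\tau$-quasiperiodicity test you list in step~(iii), so be sure to land on the Wronskian version before running the three-step comparison. Second, your treatment of the $\Dtheta$-equation via $\partial_x^2\ln\theta_1=-\wp-2\eta$ and the pole-matching identity for $\wp-e_1$ is the standard and correct derivation; the factor $4$ and the combination $\eta+\tfrac{\pi^2}{12}(\vartheta_3^4+\vartheta_4^4)$ in the paper's formula come out exactly from $e_1=\tfrac{\pi^2}{3}(\vartheta_3^4+\vartheta_4^4)$ once the lattice is normalised to $(1,\tau)$ and one keeps track of the factor-of-two between the paper's half-period convention $\eta=\zeta(1|1,\tau)$ and the usual $\wp$-lattice. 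As you anticipated, the only real work is this bookkeeping; conceptually nothing is missing.
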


These formulae are  consequences of more general differential
properties of Jacobi's functions briefly tabulated in \cite{brP6}. One
can see that the similar properties are inherent characteristics of the
general $\Theta$-functions if the $g$-dimensional jacobian  is
isomorphic to a product of elliptic curves. Many examples of such
reductions can be found in \cite{38}.

\begin{example}
Define a $\Theta$-function with characteristics
$\hbox{\larger$\AB{\boldsymbol{\alpha}}{\boldsymbol{\beta}}$}$ as
follows:
\begin{equation}\label{thetaAB}
\Theta\hbox{\larger$\AB{\boldsymbol{\alpha}}{\boldsymbol{\beta}}$}
(\boldsymbol{z}|\Pi)\DEF
\,\ri^{\langle\boldsymbol{\alpha},\,\boldsymbol{\beta}\rangle}\,
\Theta\!\!\left(\boldsymbol{z}+ \mfrac12
\Pi\boldsymbol{\alpha}+\mfrac12\boldsymbol{\beta}\mbig|\Pi\right)\cdot
\re^{\pi\ri\left\langle\boldsymbol{\alpha},\,
\boldsymbol{z}+\frac14\Pi\boldsymbol{\alpha} \right\rangle}\,.
\end{equation}
Then in the case $g=2$ we have the following identity.
\end{example}
\begin{proposition}\label{P1}
The reduction formula for genus $g=2$ under $\Pi_{12}=\frac12$$:$
\begin{equation}\label{red}
\begin{aligned}
&\Theta\hbox{\larger$\AB{\alpha\,\varepsilon}{\beta\,\delta}$}
\mbig[7](\begin{smallmatrix}\frac12 z-\frac18\alpha\tau\\
{}^{\mathstrut} w-\frac14\alpha\end{smallmatrix}\Big| \ds
\begin{smallmatrix}
\frac14\tau&\frac12_{}\\\frac12\phantom{\tau}&
\varkappa\end{smallmatrix}\mbig[7])={}\\[0.5em]
&\qquad{}=
\re^{\frac{\pi}{2}\ri\alpha(z+\beta+\frac14\alpha\tau)}_{\mathstrut}
\!\cdot\!
\mbig\{\theta\AB{0}{\varepsilon} (z|\tau)\,
\theta\AB{\varepsilon}{\delta} (w|\varkappa)+
\ri^{2\beta+\varepsilon}_{\mathstrut}\!\cdot\!
\theta\AB{1}{\varepsilon}
(z|\tau)\, \theta\AB{\varepsilon}{\delta\smin1} (w|\varkappa)
\mbig\}.
\end{aligned}
\end{equation}
Differential properties of these $\Theta,\,\Theta'$-functions follow
completely from Theorem~$\ref{T6}$.
\end{proposition}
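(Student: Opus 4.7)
The plan is to expand the left hand side from its definition as a double theta series over $\mathbb{Z}^2$ and exploit the half--integer entry $\Pi_{12}=\tfrac12$ in the period matrix to decouple the lattice sum.

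First I would use \eqref{thetaAB} to strip off the characteristic $\hbox{\larger$\AB{\alpha\,\varepsilon}{\beta\,\delta}$}$: pushing the shifts $\frac12\Pi\boldsymbol{\alpha}+\frac12\boldsymbol{\beta}$ into the argument and collecting the exponential prefactor $\ri^{\alpha\beta+\varepsilon\delta}\re^{\pi \ri \langle\boldsymbol{\alpha},\boldsymbol{z}+\frac14\Pi\boldsymbol{\alpha}\rangle}$. With the specific argument $(\tfrac12 z-\tfrac18\alpha\tau,\; w-\tfrac14\alpha)$ the additive shift $\tfrac18\alpha\tau$ in the first coordinate cancels against the first entry of $\frac12\Pi\boldsymbol{\alpha}$, and likewise $\tfrac14\alpha$ cancels against the second entry; the remaining shifts $\tfrac14\varepsilon$ and $\tfrac12\varkappa\varepsilon+\tfrac12\delta$ are exactly the characteristic shifts needed to reconstruct two Jacobi theta series with characteristics $(\varepsilon,\delta)$ in the second coordinate. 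This accounts for the phase $\re^{\frac{\pi}{2}\ri\alpha(z+\beta+\frac14\alpha\tau)}$ outside the braces.

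Next I would write the resulting characteristic--free $\Theta(\boldsymbol{z}'|\Pi)$ as $\sum_{n_1,n_2\in\mathbb{Z}}\!\re^{\pi\ri\,\Pi_{11} n_1^2+\pi\ri\,\varkappa n_2^2+2\pi\ri\,\Pi_{12}n_1n_2+2\pi\ri(n_1z'_1+n_2z'_2)}$ and observe that the cross term produces $\re^{\pi\ri n_1 n_2}=(-1)^{n_1n_2}$, which depends only on parities. Substituting $n_1=2k+r$ with $r\in\{0,1\}$ gives $(-1)^{rn_2}$ and turns $\Pi_{11}n_1^2=\tfrac14\tau(2k+r)^2$ into $k^2\tau+kr\tau+\tfrac14 r^2\tau$, so the $k$--sum is a Jacobi $\theta$--series in modulus $\tau$ whose characteristic depends on $r$, while the $n_2$--sum is a Jacobi $\theta$--series in modulus $\varkappa$ whose characteristic receives the sign $(-1)^{rn_2}$. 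Identifying the two cases $r=0,1$ with $\theta\AB{0}{\varepsilon}(z|\tau)\,\theta\AB{\varepsilon}{\delta}(w|\varkappa)$ and $\theta\AB{1}{\varepsilon}(z|\tau)\,\theta\AB{\varepsilon}{\delta\smin1}(w|\varkappa)$ respectively yields the two terms in braces on the right hand side, up to an overall factor $\ri^{2\beta+\varepsilon}$ coming from bookkeeping the half--shifts.

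The principal obstacle is therefore not conceptual but the careful tracking of the prefactors: one must correctly combine the phases $\ri^{\langle\boldsymbol{\alpha},\boldsymbol{\beta}\rangle}$, $\re^{\pi\ri\langle\boldsymbol{\alpha},\boldsymbol{z}+\frac14\Pi\boldsymbol{\alpha}\rangle}$, and the sign $(-1)^{rn_2}$ with the standard normalization of $\theta\AB{\varepsilon}{\delta}$ in \eqref{jacobi} so that the exponent $2\beta+\varepsilon$ of $\ri$ emerges with the right parity. Once the reduction is established, the last sentence of the proposition is immediate: each factor on the right hand side is a $1$--dimensional Jacobi $\theta\AB{\varepsilon}{\delta}(\cdot|\tau)$ in a single variable, hence its partial derivatives with respect to $z$ or $w$ are governed by Theorem~\ref{T6} applied in the appropriate modulus ($\tau$ or $\varkappa$), and differential closure of $\Theta$, $\Theta'$ follows by the Leibniz rule together with the chain rule for the linear section $(z,w)\mapsto(\tfrac12 z-\tfrac18\alpha\tau,\,w-\tfrac14\alpha)$.
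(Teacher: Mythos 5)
Your strategy is sound and, as far as I can tell, it is the only reasonable way to verify \eqref{red}; note, however, that the paper itself supplies \emph{no} proof of Proposition~\ref{P1} --- the identity is stated as a known reduction, with a pointer to \cite{38} for more elaborate equivalents --- so there is no argument in the text to compare yours against. Your parity-splitting computation is the standard one and it does work: with $\Pi_{12}=\tfrac12$ the cross term in the lattice sum is $\re^{\pi\ri n_1n_2}=(-1)^{n_1n_2}$, which depends only on the parity of $n_1$, so writing $n_1=2k+r$ decouples the double sum into the two products of Jacobi series, and one checks directly (e.g.\ at $\alpha=\beta=\varepsilon=\delta=0$, where both sides reduce to $\theta_3(z|\tau)\,\theta_3(w|\varkappa)+\theta_2(z|\tau)\,\theta_4(w|\varkappa)$) that the normalization comes out right. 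Your observation that the shifts $\tfrac18\alpha\tau$ and $\tfrac14\alpha$ in the argument are chosen precisely to cancel the first component of $\tfrac12\Pi\boldsymbol{\alpha}$ from definition \eqref{thetaAB} is the correct explanation of why the $\alpha$-dependence survives only in the scalar prefactor. The one place where the sketch is genuinely incomplete --- and you flag it yourself --- is the phase bookkeeping: the exponent $2\beta+\varepsilon$ of $\ri$ arises from combining the factor $(-1)^{r\beta}$ produced by the integer shift $\beta$ in the $r=1$ branch with the residual phase from completing the square in the $n_2$-sum and with the prefactors $\ri^{\alpha\beta+\varepsilon\delta}$ and $\re^{\pi\ri\langle\boldsymbol{\alpha},\boldsymbol{z}+\frac14\Pi\boldsymbol{\alpha}\rangle}$; this is a finite, mechanical verification over $r\in\{0,1\}$ and $\varepsilon,\delta\in\{0,1\}$, but it is not yet done. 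The final remark about differential properties is correct as you state it: each factor on the right is a one-variable Jacobi theta, so Theorem~\ref{T6} applied separately in the moduli $\tau$ and $\varkappa$, together with the Leibniz rule and the linearity of the section, closes the derivatives.
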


This example is  a rather illustrative one because  curves have often
symmetries and if a genus-2 curve has an involutory symmetry differed
from the hyperelliptic one $(\lambda,\mu)\mapsto(\lambda,-\mu)$ then
one can show that its $\Pi$-matrix is reducible to form \eqref{red}.
Formula \eqref{red} is perhaps a simplest case of reduction of the
theta-functions to the two elliptic tori $\tau$ and $\varkappa$. More
complex equivalent of \eqref{red} is presented in \cite{38}.

\begin{corollary}\label{C3}
Let jacobian of the curve \eqref{hyper} be splittable  into a product
of the elliptic curves and the collective symbols $\theta$,
$\vartheta$, $\eta$ stand for arising Jacobi's theta-functions and
their constants. Then the field
$\mathbb{C}_\partial(\theta;\vartheta^2,\eta,\ldots)$ is a differential
extension of $\Nu$ $($dots indicate other constants of the field\/$)$.
\end{corollary}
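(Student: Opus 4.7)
The plan is to produce a two-step inclusion: first place $u(x)$ inside the differential field $\mathbb{C}_\partial(\theta;\vartheta^2,\eta,\ldots)$, then observe that the whole of $\Nu$ follows by differential closedness. The hypothesis that the Jacobian of \eqref{hyper} splits into a product of elliptic curves is exactly what is needed to carry the multi\-dimensional $\Theta$ from \eqref{its} down to 1-dimensional Jacobi thetas. For $g=2$ this is Proposition~\ref{P1}; for higher $g$ the analogous isogeny/reduction identity (one of the identities surveyed in \cite{38}) expresses $\Theta(x\boldsymbol{U}+\boldsymbol{D}|\boldsymbol\Pi)$ as a finite sum of products of $\theta\AB{\varepsilon}{\delta}(\alpha_j x+\beta_j|\tau_j)$ with linear $x$-sections, where the moduli $\tau_j$ and offsets $\alpha_j,\beta_j$ are constants.

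\medskip
First, I would substitute such a reduction identity into the Its--Matveev formula \eqref{its}. The result is that $u(x)=-2\,\partial_x^2\!\ln F(x)+\mathrm{const}$, where $F(x)$ is a finite polynomial expression in Jacobi thetas $\theta\AB{\varepsilon}{\delta}(\alpha_j x+\beta_j|\tau_j)$. Hence $u$ lies a priori in the field generated by these thetas and their first two $x$-derivatives. Invoking Theorem~\ref{T6}, each $\partial_x\theta\AB{\varepsilon}{\delta}$ is already a rational function of the $\theta\AB{\varepsilon}{\delta}$'s with coefficients in the constant field $\mathbb{C}(\vartheta^2,\eta,\ldots)$, and the same is true for $\partial_x\Dtheta$; hence $\partial_x^2\ln F$, and a fortiori $u(x)$, belongs to $\mathbb{C}_\partial(\theta;\vartheta^2,\eta,\ldots)$.

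\medskip
Second, the differential closedness statement in Theorem~\ref{T6} means that $\mathbb{C}_\partial(\theta;\vartheta^2,\eta,\ldots)$ is stable under $\partial_x$: all iterated derivatives $u_x,u_{\mathit{xx}},\ldots$ remain inside this field. The Novikov field $\Nu$ consists of rational functions in $u,u_x,u_{\mathit{xx}},\ldots$ over the field of constants $\mathbb{C}(\lambda,\mu,C_k,E_k,\ldots)$, and all these constants are, by the statement, collected in the ``dots''\ placed alongside $\vartheta^2$ and $\eta$ on the right-hand side. Therefore every element of $\Nu$ is an element of $\mathbb{C}_\partial(\theta;\vartheta^2,\eta,\ldots)$, and the inclusion is manifestly a morphism of differential fields (both use the same derivation $\partial_x$).

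\medskip
The only genuinely nontrivial step is the first one, namely that the split-Jacobian hypothesis really converts the abstract $\Theta$ in \eqref{its} into a polynomial in one-dimensional $\theta$'s with constant coefficients: this is where Proposition~\ref{P1} (for $g=2$) and its higher-genus analogues are doing the real work. Once that reduction is in hand, Theorem~\ref{T6} makes the remainder of the argument purely formal, so this is where I would expect to spend the effort when filling in the details.
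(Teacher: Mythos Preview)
Your proposal is correct and matches the paper's approach. The paper presents Corollary~\ref{C3} without an explicit proof, as an immediate consequence of Theorem~\ref{T6} together with the reduction identities (Proposition~\ref{P1} for $g=2$, and the higher-genus analogues referenced in \cite{38}); your two-step argument---reduce $\Theta(x\boldsymbol{U}+\boldsymbol{D})$ to a polynomial in Jacobi $\theta$'s via the splitting hypothesis, then invoke the differential closedness of the set $\{\theta_k,\Dtheta\}$ from Theorem~\ref{T6} to place $u$ and all its derivatives inside $\mathbb{C}_\partial(\theta;\vartheta^2,\eta,\ldots)$---is exactly the intended reasoning.
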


\begin{example}
Non-elliptic 2-gap potential  \eqref{its}  with
$\Theta=\Theta\AB{00}{00}$ for the reduction case \eqref{red}:
\begin{equation}\label{2gap}
u=-2\ln_{\mathit{xx}}\!\!\mbig\{\theta_4(Ux+A|\tau)\,
\theta_2(Vx+B|\varkappa)-\ri\,
\theta_1(Ux+A|\tau)\,\theta_1(Vx+B|\varkappa)\mbig\},
\end{equation}
where  $\{\tau,\varkappa,A,B,V\}$ are arbitrary.  Since the
differential $\theta$-calculus is completely at hand, we get a
particular but nontrivial example of solution to Dubrovin's
effectivization formulae for genus $g=2$. Recall that the problem
consists \cite{5} in determination of the `winding' vector
$\boldsymbol{U}$ and is described by a system of equations containing
the undetermined fourth derivatives of the $\Theta$-function \cite{5}.
In the example under consideration the ultimate answer turns out to be
quite finite but somewhat large to display here. Equation for the one
sought-for quantity $U$ is an algebraic equation of degree 9 (exercise:
derive it).
\end{example}

\subsection{Linearly exponential divisor and
$\Theta$-representation of $\Nu$} Let us use notation of  formula
\eqref{theta} and plug into \eqref{its} an inessential exponential
multiplier:
\begin{equation}\label{k}
u(x)=-2\,\frac{d^2}{dx^2}\ln\Theta(x\,\boldsymbol{U}+\boldsymbol{D})\,
\re^{h\,x}+\mathrm{const}\,.
\end{equation}
Introduction of this term is motivated by the fact that solutions for
the $\Psi$-function \eqref{theta} are expressed not only through the
$\Theta$-functions but involve an exponential factor. We shall call the
quantity $\Theta(x\,\boldsymbol{U}+\boldsymbol{D})\,\re^{hx}$, with
$h$, $\boldsymbol{U}$, and $\boldsymbol{D}$ being constants with
respect to $\partial$, the \textit{linearly exponential divisor} or
$\boldsymbol{\mathfrak{L}}_x$-divisor. Let us form a field
$\mathbb{C}(\boldsymbol{\mathfrak{L}}_x)$. The following proposition
gives a weaker property than Corollary~\ref{C3}, but it is valid for
arbitrary genera.

\begin{proposition}\label{P2}
The field $\mathbb{C}\big(\boldsymbol{\mathfrak{L}}_x\big)$ is
$\partial$-differential and finitely generated.
\end{proposition}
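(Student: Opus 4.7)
The plan is to handle the two assertions—$\partial$-closure and finite generation over $\mathbb{C}$—separately, in both cases translating statements about $\boldsymbol{\mathfrak{L}}_x$-divisors into statements about the field of meromorphic (abelian) functions on the jacobian of \eqref{hyper}. Throughout I would fix the winding vector $\boldsymbol{U}$ (which is a datum of the FG-potential) and view $\mathbb{C}(\boldsymbol{\mathfrak{L}}_x)$ as the rational hull of the family $\{\Theta(x\boldsymbol{U}+\boldsymbol{D})\re^{hx}\}$ parametrised by admissible pairs $(\boldsymbol{D},h)$.

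For the $\partial_x$-closure, the immediate computation
\[
\partial_x\boldsymbol{\mathfrak{L}}_x=\boldsymbol{\mathfrak{L}}_x\cdot\Big(h+\langle\boldsymbol{U},\nabla\ln\Theta\rangle(x\boldsymbol{U}+\boldsymbol{D})\Big)
\]
reduces the problem to showing that the directional logarithmic derivative $\langle\boldsymbol{U},\nabla\ln\Theta\rangle$ restricted to the line $x\boldsymbol{U}+\boldsymbol{D}$ is rational in the generators. Here I would exploit the quasi-periodicity of $\Theta$: each first partial $\partial_{z_k}\ln\Theta$ has only a \emph{constant} quasi-periodic anomaly $-2\pi\ri n_k$, so differences $\partial_{z_k}\ln\Theta(z+\boldsymbol{D}_1)-\partial_{z_k}\ln\Theta(z+\boldsymbol{D}_2)$ and the second logarithmic derivatives $\partial_{z_kz_l}\ln\Theta$ are genuine abelian functions on the jacobian. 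Riemann's addition theorem expresses each such abelian function as a $\Theta$-quotient $\Theta(z+\boldsymbol{A}_1)\Theta(z+\boldsymbol{A}_2)/\Theta(z+\boldsymbol{B}_1)\Theta(z+\boldsymbol{B}_2)$ with $\boldsymbol{A}_1+\boldsymbol{A}_2=\boldsymbol{B}_1+\boldsymbol{B}_2$, which after restriction to $z=x\boldsymbol{U}+\boldsymbol{D}$ is patently a ratio of $\boldsymbol{\mathfrak{L}}_x$-divisors; the residual constant anomaly is absorbed into the $\re^{hx}$-slot of a companion generator, so $\partial_x\boldsymbol{\mathfrak{L}}_x\in\mathbb{C}(\boldsymbol{\mathfrak{L}}_x)$.

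For finite generation, I would appeal to the Lefschetz--Weil theorem: the field of meromorphic functions on the $g$-dimensional complex torus $\mathrm{Jac}(\R)$ is finitely generated over $\mathbb{C}$ of transcendence degree $g$, with an explicit finite set of generating $\Theta$-quotients of level $n\geq 3$. Pulling back along the line $z=x\boldsymbol{U}+\boldsymbol{D}$ produces a finitely generated subfield of $\mathbb{C}(\boldsymbol{\mathfrak{L}}_x)$, and adjoining any single base divisor $\Theta(x\boldsymbol{U}+\boldsymbol{D}_0)\re^{h_0x}$ recovers every other $\boldsymbol{\mathfrak{L}}_x$-generator up to multiplication by an abelian-function ratio already lying in the pullback subfield.

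The main obstacle, and the delicate point, will be controlling the exponents $h$: a priori $h\in\mathbb{C}$ is arbitrary, and $\mathbb{C}(\{\re^{hx}\}_h)$ has infinite transcendence degree as soon as the $h$'s are $\mathbb{Q}$-linearly independent. To overcome this I must argue that within the FG-setting the admissible exponents come only from values (and $\mathbb{Z}$-linear combinations of values) of the normalised Abelian integral $\II(\p)$ of \eqref{theta} along cycles on $\R$, and so span a $\mathbb{Z}$-module of finite rank bounded by $2g$. This is the step where the finite genus of $\R$—i.e.\ the FG-hypothesis itself—enters decisively, and it is exactly what separates Proposition~\ref{P2} from the infinite-dimensional transcendental extensions discussed in Sect.~\ref{3}.
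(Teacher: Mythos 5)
Your proposal goes wrong at the very first step: the identification of the field. In the paper, $\boldsymbol{\mathfrak{L}}_x$ is a \emph{single} quantity $\Theta(x\boldsymbol{U}+\boldsymbol{D})\,\re^{hx}$ with $\boldsymbol{U}$, $\boldsymbol{D}$, $h$ fixed constants, and $\mathbb{C}(\boldsymbol{\mathfrak{L}}_x)$ is the differential field it generates; the content of Proposition~\ref{P2} is that this differential field is finitely generated as an ordinary field. The paper's proof is two lines: the potential \eqref{k}, i.e.\ $-2(\ln\boldsymbol{\mathfrak{L}}_x)_{\mathit{xx}}+\mathrm{const}$, satisfies a Novikov equation of order $2g+1$, which is linear in its top derivative; unwinding, $\frac{d^{2g+3}}{dx^{2g+3}}\boldsymbol{\mathfrak{L}}_x$ is rational in $\boldsymbol{\mathfrak{L}}_x$ and its first $2g+2$ derivatives, and by induction so are all higher derivatives. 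Hence the differential field equals $\mathbb{C}\big(\boldsymbol{\mathfrak{L}}_x,\partial_x\boldsymbol{\mathfrak{L}}_x,\ldots,\partial_x^{2g+2}\boldsymbol{\mathfrak{L}}_x\big)$: finitely generated and closed under $\partial$. You instead take the field generated by the entire two-parameter family $\{\Theta(x\boldsymbol{U}+\boldsymbol{D})\,\re^{hx}\}_{(\boldsymbol{D},h)}$, which is a different and much larger object.

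Under your reading the statement is actually false, and the "delicate point" you flag is fatal rather than delicate: the ratio of two generators with the same $\boldsymbol{D}$ and exponents $h_1,h_2$ is $\re^{(h_1-h_2)x}$, so your field contains $\re^{hx}$ for every $h\in\mathbb{C}$ and has infinite transcendence degree. The proposed rescue---restricting $h$ to a $\mathbb{Z}$-module of rank at most $2g$ built from periods of $\II$---contradicts the paper, which introduces $h$ in \eqref{k} explicitly as an arbitrary, "inessential" constant; and even restricting to the exponents that occur in \eqref{theta} gives the values $\II(\p)$ for $\p$ ranging over the curve, a continuum rather than a finitely generated $\mathbb{Z}$-module. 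Your $\partial$-closure step has a gap of the same flavour: Riemann's addition theorem delivers only \emph{differences} of first logarithmic derivatives of $\Theta$ (and the second logarithmic derivatives) as abelian functions, whereas $\partial_x\ln\boldsymbol{\mathfrak{L}}_x$ itself is a second-kind, $\zeta$-like object with an \emph{additive} quasi-period anomaly; it cannot be "absorbed into the $\re^{hx}$-slot", and already for $g=1$ a Picard--Vessiot argument (the Galois group of the adjoined exponentials is a torus, which admits no nontrivial homomorphism to the additive group) shows that $\theta'/\theta$ does not lie in the field generated by translates of $\theta$ times exponentials. In the paper's setup none of this arises, because $\partial_x\boldsymbol{\mathfrak{L}}_x$ is simply one of the finitely many adjoined generators.
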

\begin{proof}
Expression \eqref{k} satisfies a Novikov equation which has finite
order $2g+1$. Hence the derivatives
$\frac{d^n}{dx^n}\Theta(x\,\boldsymbol{U}+\boldsymbol{D})\,\re^{hx}$ of
order $n\geqslant (2g+1)+2$ are expressed rationally through
$\boldsymbol{\mathfrak{L}}_x$ and its lower derivatives.
\end{proof}

The field $\Theta_\partial=
\mathbb{C}_\partial\big(\Theta(x\,\boldsymbol{U}+\boldsymbol{D})\big)$
may be considered as a field generated by one linear divisor ($h=0$).
It is obviously that $\Nu\subset\Theta_\partial$. Of course,
$\Theta_\partial$ contains now not only Abelian functions but this
extension is well defined since  $\Theta$-series and
$\boldsymbol{\mathfrak{b}}$-periods  $\boldsymbol{U}$ are computed once
$u(x)$ has been given. For this reason, we can redefine equation
\eqref{1} as one given over $\Theta_\partial$ and thereby we let
\begin{equation}\label{NT}
\NTheta\DEF\mathbb{C}_\partial
\big(\Theta(x\,\boldsymbol{U}+\boldsymbol{D})\big)\,.
\end{equation}
Although $\Nu\subsetneqq\NTheta$, the field $\NTheta$ is said to be a
$\Theta$-\textit{representation} of Novikov's fields. The constant
$\lambda$, for the moment, may be disregarded.

It should be emphasized that in the FG-integration there appears not
merely an abstract multi-dimensional $\Theta$-series but its
specification $\boldsymbol{\mathfrak{L}}_x$; the 1-dimensional linear
section of jacobians. Moreover, the most general $\Theta$-function is
an object defined up to an exponential multiplier (see \eqref{thetaAB})
so that we may think of it, and therefore of the \textit{divisor
$\boldsymbol{\mathfrak{L}}_x$, as a continual generalization of
$\Theta$-function with discrete characteristics}. Thus, Liouvillian
solutions $\Nu\langle\Psi^{\pm}\rangle$ are expressed through the
$\Theta$ with a \textit{linear} dependence of its arguments upon $x$
and the `linearly exponential' multiplier\footnote{This `linear
exponent' is a result of the contemporary theory; Baker \cite{32} did
not specify an exponential $\Theta$-structure of solutions but just
cited to pp.~275, 289 of his \cite{baker}.} $\re^{\II(\p)\,x}$ first
explicitly appeared in Akhiezer's work \cite{15} and subsequently was
axiomatized by Krichever \cite{24}.

\begin{remark}\label{R6}
In what follows  we shall exhibit that the continually parametric
object $\boldsymbol{\mathfrak{L}}_x$ may be introduced in its own
right, in a particular case,  through the \textit{quadrature
integrable} ODEs. Interestingly, the different kind sections of
theta-function arguments can lead to other important equations.
%\footnote{It is not
%difficult to see a relationship of this object with the
%algebro-geometric Sato--Fay $\boldsymbol{\tau}$-function \cite{fay}.
%The author is indebted to Harry Braden for sending the Fay's
%unpublished work  \cite{fay}.}.
One remarkable property of such a kind appears even  in the $g=1$ case.
Let us consider the function $\theta(x|\tau)$ as a function on a
simplest (\ie, straight line) section of the 2-dimensional variety
\{jacobian $\otimes$ moduli space\}. Without loss of generality we may
impart to this function the form $\theta_1(A\tau+B|\tau)$. Then this
object generates the general Hitchin class of solutions to the sixth
Painlev\'e equation in a form exactly as does the finite-gap formula
\eqref{its}, that is logarithmic derivative of a ratio of entire
functions \cite{brP6}.
\end{remark}

\section{Integration as a linearly exponential
$\Theta$-extension\label{6}}

In this section we give a formulation of the $\Theta$-function scheme
as integration \`a la Picard--Vessiot. Let us consider the above
Picard--Vessiot extension \cite{singerput} $\Nu\subset
\Nu\langle\Psi^{\pm}\rangle$ in the representation $u(x)$. This
transcendence is Liouvillian and dependence of the $\Psi$ on parameter
$\lambda$ is also transcendental contrary to the `rationality' of the
field $\mathbb{C}(\lambda,\mu)$. Meanwhile, based on Theorems~\ref{T1}
and \ref{T5}, we see that the field has the following structure:
$$
\Nu\langle\Psi^{\pm}\rangle=
\mathbb{C}_\partial\!\bigg(
\Mfrac{\Theta\big(x\,\boldsymbol{U}+\boldsymbol{D}+
\boldsymbol{\mathfrak{U}}(\p)\big)}{
\Theta(x\,\boldsymbol{U}+\boldsymbol{D})}\,
\re^{\II(\p)\,x}\bigg).
$$
It immediately follows that if we pass to the $\Theta$-representation
\eqref{NT} then integration procedure can be reduced to one operation.
Namely, a field, over which an equation has been defined, is
supplemented with an element of the same form as one generating the
field itself:
\begin{equation}\label{emb}
\Nu\subset\NTheta \subset \mathbb{C}_\partial
\mbig[5](
\Theta(x\,\boldsymbol{U}+\boldsymbol{D})\, \re^{h\,x\ds\mathstrut},\,
\Theta(x\,\boldsymbol{U}+\boldsymbol{D}+
\boldsymbol{\mathfrak{U}}(\p))\,
\re^{\II(\p)\,x}\mbig[5])\,.
\end{equation}

It is significant in this viewpoint that  problem of the `linear
integration' drops out along with  the problem of building the base
field $\Nu$ being treated as a problem of the nonlinear integration.
%In $u(x)$-representation, as noted above, there
%has to be an addition of the two transcendental operations: integral
%and exponent. In the representation through $\gamma$'s we would add
%integral of algebraic functionintegral of algebraic function
%\eqref{jac} and inversion procedure.
In the $\Theta$-representation the potential is determined only by
means of operations in the field $\NTheta$; formula \eqref{k}.

\begin{theorem}\label{T7}
All the embeddings
$$
\Nu\subset\NTheta \subset\Nu \langle\Psi^{\pm}\rangle\subset
\mathbb{C}_\partial\mbig[5](
\Theta(x\,\boldsymbol{U}+\boldsymbol{D})\,,\,
\Theta\big(x\,\boldsymbol{U}+\boldsymbol{D}+
\boldsymbol{\mathfrak{U}}(\p)\big)\,
\re^{\II(\p)\,x}\mbig[5])
$$
are the Liouvillian extensions.
\end{theorem}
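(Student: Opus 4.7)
The proof is driven by two identities already in the paper: the Its--Matveev formula \eqref{k}/\eqref{its}, which can be read as
$$
\bigl(\ln\Theta(x\,\boldsymbol{U}+\boldsymbol{D})\bigr)''=-\tfrac12\bigl(u-\mathrm{const}\bigr)\in\Nu,
$$
and formula \eqref{theta}, which exhibits $\Psi^\pm$ as a ratio of shifted $\Theta$-series times $\re^{\pm\II(\p)\,x}$. My plan is to verify each link of the chain separately, all of them reducing to the same atomic step---one quadrature plus one exponential---together with Theorem~\ref{T1}.

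For the first link $\Nu\subset\NTheta$, I set $\phi:=\Theta(x\,\boldsymbol{U}+\boldsymbol{D})$ and $L:=\phi'/\phi$; the Its--Matveev identity gives $L'\in\Nu$, so $L$ is adjoined by a primitive (an integration-type Liouvillian step), and then $\phi$ is adjoined by the exponential step $\phi'/\phi=L\in\Nu(L)$. This yields the Liouvillian tower $\Nu\subset\Nu(L)\subset\Nu(L,\phi)=\NTheta$. The link $\Nu\subset\Nu\langle\Psi^{\pm}\rangle$ is already established in Theorem~\ref{T1} via the integral representation \eqref{pm}. For the last link, put
$$
\mathcal{F}:=\mathbb{C}_\partial\bigl(\Theta(x\,\boldsymbol{U}+\boldsymbol{D}),\;\Theta\bigl(x\,\boldsymbol{U}+\boldsymbol{D}+\boldsymbol{\mathfrak{U}}(\p)\bigr)\,\re^{\II(\p)\,x}\bigr).
$$
Formula \eqref{theta} gives $\Psi^+\in\mathcal{F}$, and $\Psi^-=R/\Psi^+\in\mathcal{F}$ since $R\in\Nu\subset\NTheta\subset\mathcal{F}$; hence $\Nu\langle\Psi^{\pm}\rangle\subset\mathcal{F}$. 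Furthermore $\Theta(x\,\boldsymbol{U}+\boldsymbol{D}+\boldsymbol{\mathfrak{U}}(\p))\,\re^{\II(\p)\,x}=\Psi^+\cdot\phi$, so in fact $\mathcal{F}=\Nu\langle\Psi^{\pm}\rangle(\phi)$, and the adjunction of $\phi$ is Liouvillian by repeating the argument of the first link with $\Nu\langle\Psi^{\pm}\rangle$ in place of $\Nu$.

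The one delicate point, which I expect to be the main obstacle, is the intermediate inclusion $\NTheta\subset\Nu\langle\Psi^{\pm}\rangle$ as a literal chain of subfields. The cleanest way around it is to regard all four fields as subfields of $\mathcal{F}$ and to invoke the fact that Liouvillian extensions are closed under taking intermediate fields: since both $\NTheta$ and $\Nu\langle\Psi^{\pm}\rangle$ sit between $\Nu$ and the Liouvillian composite $\mathcal{F}$, each is automatically Liouvillian over any smaller field from the chain that it contains. A more direct route would be to solve for $\phi$ inside $\Nu\langle\Psi^{\pm}\rangle$ by combining $\Psi^+\Psi^-=R\in\Nu$ with the logarithmic-derivative identities \eqref{temp}. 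In either version the whole argument reduces to the single elementary mechanism encoded in \eqref{k}: the $\Theta$-factor $\phi$ is adjoined to any differential field containing $\Nu$ by precisely one primitive followed by one exponential, which is the defining pattern of a Liouvillian step.
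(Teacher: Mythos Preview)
Your argument is correct and coincides with the paper's own proof: the three links are handled, respectively, by the Its--Matveev formula \eqref{its}, by formula \eqref{pm} together with $R\in\NTheta$, and by the rewriting $\Theta(x\boldsymbol{U}+\boldsymbol{D}+\boldsymbol{\mathfrak{U}}(\p))\,\re^{\II(\p)x}=\Psi^+\cdot\Theta(x\boldsymbol{U}+\boldsymbol{D})$. Your caution about the literal containment $\NTheta\subset\Nu\langle\Psi^{\pm}\rangle$ goes beyond the paper, which simply treats that step as the Liouvillian adjunction of $\Psi^{\pm}$ over $\NTheta$ without further comment.
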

\begin{proof}
The fact that  embedding $\Nu\subset\NTheta$ is Liouvillian follows
directly form formula \eqref{its}. The statement concerning the second
embedding  is a consequence of \eqref{pm} because
$R(\dpotu;\lambda)\in\NTheta$. Rewriting formula \eqref{theta} in the
form
$$
\Theta\big(x\,\boldsymbol{U}+\boldsymbol{D}+
\boldsymbol{\mathfrak{U}}(\p)\big)\,
\re^{\II(\p)\,x}=\Psi^+\cdot
\Theta(x\,\boldsymbol{U}+\boldsymbol{D})\,,
$$
we deduce that property for the last embedding to be Liouvillian
results from the proportionality of \eqref{theta} and expression
\eqref{pm} for $\Psi^+$ (Theorems~\ref{T3} and \ref{T5}).
\end{proof}

\begin{theorem}\label{T8}
For generic $\lambda$  integration of equation \eqref{1} in the
$\Theta$-rep\-re\-sen\-ta\-tion is equivalent to a multiplication of an
element $\Xi(x)$ generating the field $\NTheta$ by an adjoined linearly
exponential divisor:
\end{theorem}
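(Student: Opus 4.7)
The plan is to read the statement off directly from the closed-form representation \eqref{theta} combined with the structural picture already established in Theorem~\ref{T7}. By \eqref{NT}, the field $\NTheta$ is differentially generated over $\mathbb{C}$ by the single element $\Xi(x)\DEF\Theta(x\,\boldsymbol{U}+\boldsymbol{D})$, so the content of the theorem reduces to exhibiting one explicit linearly exponential divisor whose multiplication by $\Xi(x)$ yields a generator of the Picard--Vessiot extension $\Nu\langle\Psi^{\pm}\rangle$ on top of $\NTheta$.

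First I would rewrite \eqref{theta} in the cleared-denominator form
$$
\Xi(x)\cdot\Psi^+\bigl(x;\lambda(\p)\bigr)=
\Theta\bigl(x\,\boldsymbol{U}+\boldsymbol{D}+
\boldsymbol{\mathfrak{U}}(\p)\bigr)\,\re^{\II(\p)\,x},
$$
and observe that the right-hand side is exactly a linearly exponential $\boldsymbol{\mathfrak{L}}_x$-divisor in the sense of the definition preceding Proposition~\ref{P2}, with the constant vector shifted by $\boldsymbol{D}\mapsto\boldsymbol{D}+\boldsymbol{\mathfrak{U}}(\p)$ and exponential rate $h=\II(\p)$. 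In other words, the operation `integrate \eqref{1}' is transferred, in the $\Theta$-representation, to the operation `multiply the generating divisor $\Xi$ by a new $\boldsymbol{\mathfrak{L}}_x$-divisor of the same syntactic shape, to be adjoined to $\NTheta$'. Since $\Xi(x)$ is a unit of the ambient differential field, the ratio recovers $\Psi^+$ and so the adjoined divisor together with $\NTheta$ generates precisely the right-hand field of the embedding \eqref{emb}.

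Second I would settle genericity and nontriviality. Theorem~\ref{T1} guarantees that for $\lambda\ne E_j$ the extension $\Nu\subset\Nu\langle\Psi^{\pm}\rangle$ is transcendental of transcendence degree one; in particular $\Psi^+\notin\NTheta$, so the adjoined $\boldsymbol{\mathfrak{L}}_x$-divisor genuinely enlarges the field and the claimed multiplication is not a tautology. The Liouvillian character of the resulting adjunction is nothing but the second embedding of Theorem~\ref{T7}, which has already been proved via formula \eqref{pm}.

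The main obstacle is conceptual rather than computational: one must match the analytic notion of a `linearly exponential divisor' introduced in Sect.~\ref{5} with the algebraic notion of a Liouvillian generator built from the Kolchin ansatz \eqref{AB}. The match is supplied by \eqref{pm}, whose integrand $\frac12\pow{R}{x}{}\!/R+\mu/R$ is manifestly in $\NTheta$, so the logarithmic derivative of the new divisor lies in $\NTheta$ and the adjunction fits the exponential case of \eqref{AB}. No computation beyond the bookkeeping of \eqref{theta} and the invertibility of $\Xi$ is required; the statement is essentially a reformatting of Theorems~\ref{T5} and \ref{T7} in the language of the divisor $\boldsymbol{\mathfrak{L}}_x$.
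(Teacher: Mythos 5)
Your proposal is correct and follows essentially the same route as the paper: both read the claim directly off formula \eqref{theta}, exhibiting the numerator $\Theta\big(x\,\boldsymbol{U}+\boldsymbol{D}+\boldsymbol{\mathfrak{U}}(\p)\big)\,\re^{\II(\p)\,x}$ as the adjoined $\boldsymbol{\mathfrak{L}}_x$-divisor. The only cosmetic differences are that the paper takes $\Xi(x)=\Theta(x\,\boldsymbol{U}+\boldsymbol{D})^{\smin1}$ as the generator (so the statement is literally a multiplication rather than your division by a unit, the generated field being the same) and covers both branches $\Psi^{\pm}$ at once by observing that the Abelian integrals change sign under the sheet permutation $\mu\mapsto-\mu$.
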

\begin{proof}
Consider $\Xi(x)=\Theta(x\,\boldsymbol{U}+\boldsymbol{D})^{\smin1}$. It
is clear that $\mathbb{C}_\partial(\Xi)=\NTheta$. Then
\begin{equation}\label{lin} \Psi^{\pm}\big(x;\lambda(\p)\big)=
C_{\pm}\cdot\Xi(x)\cdot \Theta\big(x\,\boldsymbol{U}+\boldsymbol{D} \pm
\boldsymbol{\mathfrak{U}}(\p)\big) \, \re^{\pm\II(\p)\,x}
\end{equation}
because all the holomorphic/meromorphic integrals on hyperelliptic
curves change sign under  permutation of sheets $\mu\mapsto -\mu$; we
may write $\pm \boldsymbol{\mathfrak{U}}(\p)$, $\pm\II(\p)$ in
\eqref{lin}.
\end{proof}

This theorem has an important treatment:
\begin{itemize}
\item \textit{When passing to the $\Theta$-representation the equation
\eqref{1} is integrated as if it were an equation with
\textbf{constant} coefficients. Integration procedure is thus
trivialized under a proper choice of `domain of rationality'. The
inverse transform method for the  soliton class is a particular
case of this general construction}.
\end{itemize}
Indeed, the simplest FG-case corresponds to the 0-gap one with
$\NTheta=\mathbb{C}(\lambda)$ and we need only one linear exponent; the
construction \eqref{lin} acquires the form
\begin{equation}\label{0}
\Psi^{\pm}(x;\lambda)= C_{\pm}\cdot \Xi(x)\cdot \re^{\pm
a(\lambda)x}\,, \qquad \Xi(x)=\re^{0\cdot
x}_{}\in\NTheta\,,
\end{equation}
wherein  $\Xi(x)$ has  the `same form' as the adjoint element
$\re^{a(\lambda)x}_{}$. Adjoining all the exponents associated with an
$N$-soliton solution (and their varieties like positons \eqref{positon}
or rational solitons), we obtain the general 0-gap case.

\begin{remark}\label{R7}
The structure of solution \eqref{lin} in form of simple multiplication
of elements generating $\NTheta$ and its extension
$\Nu\langle\Psi^{\pm} \rangle$ is not quite typical for equations
integrable by attaching the linear exponents \cite{kov} or, especially,
for equations with a solvable Galois group \cite{kolchin}. This
property owes its origin to the availability of $\lambda$ in equation
(Theorem~\ref{T1}).
\end{remark}

Transition between  $u$- and $\Theta$-representations is transcendental
with respect to  $\lambda$-dependence and other constants of the field.
These constants are the $\boldsymbol{\Pi}$-matrices of curves,
$\boldsymbol{U}$-periods, and vector $\boldsymbol{D}$. We may therefore
trivialize the scheme above if we proceed  further and redefine
equation \eqref{1} over \eqref{NT} as one given over the
$\lambda$-pencil (field) of the
$\boldsymbol{\mathfrak{L}}_x(\p)$-divisors:
\begin{equation*}\label{LP}
\boldsymbol{\mathfrak{L}}_x(\p)\DEF
\Theta\big(x\,\boldsymbol{U}+\boldsymbol{D}+
\boldsymbol{\mathfrak{U}}(\p)\big) \,
\re^{\II(\p)\,x}\,.
\end{equation*}
Although  field
$\mathbb{C}_\partial\big(\boldsymbol{\mathfrak{L}}_x(\p)\big)$ is
generated by the infinite number of elements, equation \eqref{1} itself
constitutes an infinite $\lambda$-pencil of equations. Strictly
speaking, both spectral and quadrature approaches require to look upon
Eq.~\eqref{1} as being a \textit{differentially-algebraic} one:
differential in $x$ and algebraic (polynomial) in $\lambda$.
Furthermore, by virtue of Proposition~\ref{P2}, the arbitrary
$\boldsymbol{\mathfrak{L}}_x(\p)$-divisor generates some solution of
Novikov's equation. It may be fixed by choice of one element of the
pencil $\boldsymbol{\mathfrak{L}}_x(\p)$:
\begin{equation}\label{Lo}
\NTheta\to\mathbb{C}_\partial\mbig[5](
\Theta\big(x\,\boldsymbol{U}+\boldsymbol{D}_{\sss0}+
\boldsymbol{\mathfrak{U}}(\p_{\sss0})\big) \,
\re^{\II(\p_{\sss0})\,x}\mbig[5])\,.
\end{equation}
Having extended the field \eqref{Lo}, that is $\mathbb{C}_\partial\big(
\boldsymbol{\mathfrak{L}}_x(\p_{\sss0})\big)$, to the field
$\mathbb{C}_\partial\big(\boldsymbol{\mathfrak{L}}_x(\p)\big)$, its
Galois group becomes trivial since the integral of equation \eqref{1}
is given now in form of a ratio of two field elements.

\section{Integrability and differential closedness\label{7}}

\subsection{Differential closure in terms of $\theta$-functions}
Attaching the divisor $\boldsymbol{\mathfrak{L}}_x(\p)$ as
transcendental element with a parameter $\p$ tells us that \textit{it
should be introduced to the theory as the base function}, along with
the available $\Theta$'s without parameter $\p$. Owing to
Theorem~\ref{T8} this  would arrive us at a closed differential
apparatus (differential closedness) accompanying  spectral equation and
potential. Presently, the general $\Theta$-formula realization of this
viewpoint is an open problem, which is why we illustrate it by cases
when $g$-dimensional $\Theta$-function reduces to a combination of
Jacobian ones.

At first glance, from  Theorem~\ref{T6}, it would seem that supplement
of the basis \eqref{new} with $\boldsymbol{\mathfrak{L}}_x$-divisor of
the type $\theta_1(x-\mathfrak{u})\,\re^{hx}$ requires also adjunction
of all the functions $\Dtheta,\,\theta_{2,3,4}(x-\mathfrak{u})$. That
no such complication takes place will be apparent from the following
statement.

\begin{theorem}\label{T9}
For  Weierstrassian curve \eqref{torus} with modulus
$\tau=\omp\!\!/\omega$ one defines an elliptic
$\boldsymbol{\mathfrak{L}}_x(\p)$-divisor $\Lambda$ by the formula
$$
\Lambda(x;\mathfrak{u}|\tau)\DEF\theta_1(x-\mathfrak{u}|\tau)\,
\exp\!\!\mbig[7](\Mfrac{\Dtheta(\mathfrak{u}|\tau)}
{\theta_1(\mathfrak{u}|\tau)}\,
x+h\,x\mbig[7]),\qquad
\mathfrak{u}\notin \mathbb{Z}\,\tau+\mathbb{Z}\,,
$$
where $h$ is an extra parameter. Then the six functions $\Lambda$,
$\Dtheta$, and $\theta_k$ $(k=1,2,3,4)$ satisfy the closed autonomous
system of ODEs over the field $\mathbb{C}\big(\eta,\vartheta^2,
\theta(\mathfrak{u})\big)$$:$
\begin{align}
& \label{!}
\left\{
\begin{aligned}
\frac{\partial\theta_k}{\partial x} &=
\frac{\Dtheta}{\theta_1} \,\theta_k-
\pi\,\vartheta_k^2\!\cdot\!
\frac{\theta_n\,\theta_m}{\theta_1}\,,\qquad
n=\frac{8\,k-28}{3\,k-10}\,,\quad
m=\frac{10\,k-28}{3\,k-8}\\[0.3em]
\frac{\partial\Dtheta}{\partial x}&=
\frac{\Dtheta{}^2}{\theta_1}-\pi^2\,\vartheta_3^2\,\vartheta_4^2
\!\cdot\! \frac{\theta_2^{2}}{\theta_1}-
4\,\mbig[7]\{\eta+\frac{\pi^2}{12}\big(\vartheta_3^4+\vartheta_4^4\big)
\mbig[7]\} \!\cdot\!\theta_1
\\[0.3em]
\frac{1}{\Lambda}\frac{\partial\Lambda}{\partial x}&=
\frac{\Dtheta}{\theta_1}+
\frac{\pi\,\vartheta_2^2}{\theta_1(\mathfrak{u})}\!\cdot\!
\frac{\theta_1^3(\mathfrak{u})\!\cdot\!\theta_2\,\theta_3\,\theta_4+
\theta_2(\mathfrak{u})\,\theta_3(\mathfrak{u})\,\theta_4(\mathfrak{u})
\!\cdot\!\theta_1^3}
{\theta_1\!\cdot\!\big(\theta_2^2(\mathfrak{u})\!\cdot\!\theta_1^2-
\theta_1^2(\mathfrak{u})\!\cdot\!\theta_2^2\big)}+h
\end{aligned}
\right.\,.
\end{align}
\end{theorem}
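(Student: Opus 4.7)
The plan is to split the system \eqref{!} into two blocks of very different character. The five ODEs for $\theta_k$ ($k=1,2,3,4$) and for $\Dtheta$ are not new; they are Theorem~\ref{T6} specialized to integer characteristics $(\varepsilon,\delta)\in\{0,1\}^2$ under the correspondence $\theta\AB{1}{1}=-\theta_1$, $\theta\AB{1}{0}=\theta_2$, $\theta\AB{0}{0}=\theta_3$, $\theta\AB{0}{1}=\theta_4$ fixed before Theorem~\ref{T6}. The closed forms $n=(8k-28)/(3k-10)$, $m=(10k-28)/(3k-8)$ merely encode, in a single formula, the shifts $(\varepsilon,\delta)\mapsto(\varepsilon{-}1,0)$, $(0,\delta{-}1)$ appearing on the right-hand side of \eqref{new}; this is verified by the four direct substitutions $k=1,2,3,4$. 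The $(-1)^{[\delta/2]\varepsilon}$-sign is $+1$ throughout these cases. So the only nontrivial content of the theorem is the third line of \eqref{!}, the ODE for $\Lambda$.

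For this equation I would first differentiate $\ln\Lambda$ directly. Since $\mathfrak{u}$, $h$, $\tau$ are $\partial_x$-constants and $\Dtheta(z|\tau)=\partial_z\theta_1(z|\tau)$ by definition, this yields at once
\[
\frac{1}{\Lambda}\frac{\partial\Lambda}{\partial x}=\frac{\Dtheta(x-\mathfrak{u}|\tau)}{\theta_1(x-\mathfrak{u}|\tau)}+\frac{\Dtheta(\mathfrak{u}|\tau)}{\theta_1(\mathfrak{u}|\tau)}+h\,.
\]
Comparison with the asserted formula reduces the theorem to the closed-form addition identity
\[
\frac{\Dtheta(x-\mathfrak{u})}{\theta_1(x-\mathfrak{u})}-\frac{\Dtheta(x)}{\theta_1(x)}+\frac{\Dtheta(\mathfrak{u})}{\theta_1(\mathfrak{u})}=\frac{\pi\vartheta_2^2}{\theta_1(\mathfrak{u})}\cdot\frac{\theta_1^3(\mathfrak{u})\,\theta_2\theta_3\theta_4+\theta_2(\mathfrak{u})\theta_3(\mathfrak{u})\theta_4(\mathfrak{u})\,\theta_1^3}{\theta_1\,\bigl(\theta_2^2(\mathfrak{u})\theta_1^2-\theta_1^2(\mathfrak{u})\theta_2^2\bigr)}
\]
(with $\theta_k=\theta_k(x|\tau)$), which is the Jacobian version of the classical Weierstrass $\zeta$-addition formula.

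My strategy to establish this identity is to invoke first the two-term Jacobi formula
\[
\vartheta_2^2\,\theta_1(x-\mathfrak{u})\theta_1(x+\mathfrak{u})=\theta_2^2(\mathfrak{u})\theta_1^2(x)-\theta_1^2(\mathfrak{u})\theta_2^2(x),
\]
which factors the denominator on the right. Viewing both sides as meromorphic functions of $x$ for $\mathfrak{u}\notin\mathbb{Z}+\mathbb{Z}\tau$ fixed, I would then argue by Liouville: both sides have identical quasi-automorphy factors under $x\mapsto x+1$ and $x\mapsto x+\tau$ (the translation shifts of the three $\Dtheta/\theta_1$ pieces on the left telescope against the exponents carried by $\theta_{1,2,3,4}^{}(x)$ on the right); the residue at the simple pole $x\equiv\mathfrak{u}$ is $+1$ on both sides (using the duplication identity $\theta_1(2\mathfrak{u})=2\theta_1\theta_2\theta_3\theta_4(\mathfrak{u})/(\vartheta_2\vartheta_3\vartheta_4)$ and Jacobi's derivative formula $\vartheta_1'=\pi\vartheta_2\vartheta_3\vartheta_4$); the apparent pole on the right at $x\equiv-\mathfrak{u}$ is cancelled by vanishing of the numerator there (an immediate consequence of the parities of $\theta_1,\theta_{2,3,4}$); and the residues at $x\equiv 0$ cancel between the two terms on the right, matching the regularity of the left at $x\equiv 0$. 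Hence the difference of the two sides is a holomorphic elliptic function, hence a constant, which is then pinned to zero by the antisymmetry $\mathfrak{u}\leftrightarrow -\mathfrak{u}$ (or by the limit $\mathfrak{u}\to 0$).

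The step I expect to be the real obstacle is not the Liouville bookkeeping itself but recognizing the precise symmetric-antisymmetric combination $\theta_1^3(\mathfrak{u})\,\theta_2\theta_3\theta_4+\theta_2(\mathfrak{u})\theta_3(\mathfrak{u})\theta_4(\mathfrak{u})\,\theta_1^3$ as the natural output of Riemann-type three-$\theta$ product relations: the residue/pole analysis pins it down uniquely, but writing it out as a single clean algebraic expression — rather than as the sum of several ad-hoc Jacobi reductions — is where the calculation is dense. Once this identity is in hand, adding $\Dtheta(\mathfrak{u})/\theta_1(\mathfrak{u})+h$ to both sides converts it into the third equation of \eqref{!}, completing the proof.
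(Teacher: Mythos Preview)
Your argument is correct, but it is organised quite differently from the paper's. You treat the first five equations as a relabelling of Theorem~\ref{T6} and then attack the $\Lambda$-equation by brute-force verification: differentiate $\ln\Lambda$, isolate the identity
\[
\frac{\Dtheta(x-\mathfrak{u})}{\theta_1(x-\mathfrak{u})}-\frac{\Dtheta(x)}{\theta_1(x)}+\frac{\Dtheta(\mathfrak{u})}{\theta_1(\mathfrak{u})}=\text{(right-hand side)},
\]
and establish it by a Liouville pole-and-period count. This works, and your residue bookkeeping (via $\vartheta_2^2\theta_1(x{-}\mathfrak{u})\theta_1(x{+}\mathfrak{u})=\theta_2^2(\mathfrak{u})\theta_1^2-\theta_1^2(\mathfrak{u})\theta_2^2$, the duplication formula, and Jacobi's derivative formula) is the right toolkit.

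The paper, by contrast, does not verify the identity; it \emph{derives} it. The conceptual point is that the system \eqref{new} already contains the meromorphic (2nd kind) integral via $\Dtheta/\theta_1\sim\zeta$, but is missing a representative of the 3rd kind; the paper identifies $\Lambda$ as (the exponential of) the unique normalized logarithmic integral $\III(x;\mathfrak{u})=\ln\bigl[\sigma(2x{-}2\mathfrak{u})/\sigma(2x)\bigr]+2\zeta(2\mathfrak{u})x$ with singularities at $0$ and $\mathfrak{u}$, and then obtains the third line of \eqref{!} by differentiating this Weierstrassian expression and translating $\sigma,\zeta,\wp$ into $\theta$'s. So the right-hand side you had to guess and then check falls out mechanically for the author: it is just $\partial_x\III$ written in $\theta$-coordinates, and the ``dense'' symmetric combination $\theta_1^3(\mathfrak{u})\theta_2\theta_3\theta_4+\theta_2(\mathfrak{u})\theta_3(\mathfrak{u})\theta_4(\mathfrak{u})\theta_1^3$ is what the Weierstrass addition law $\zeta(a{-}b)-\zeta(a)+\zeta(b)=\tfrac12\,\frac{\wp'(a)+\wp'(b)}{\wp(a)-\wp(b)}$ becomes after the $\wp,\wp'\to\theta$ substitution. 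What the paper's route buys is an explanation of \emph{why} $\Lambda$ is the correct object to adjoin (differential closedness of the full basis of Abelian integrals) and of where the formula comes from; what your route buys is self-containment---you never leave the $\theta$-world and never invoke $\sigma$ or $\zeta$.
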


Motivation for the theorem lies in the fact that Abelian integrals, on
the one hand, are representable in terms of theta-functions and, on the
other hand,  are differentially closed: the base integrals of 1st, 2nd,
and 3rd kind form a differential basis. Indeed, derivatives of
integrals are functions and any function is expressed through the two
base ones $(\wp,\,\wpp)$ (generators of an elliptic functions field)
which are in turn integrals of exact meromorphic differentials of 2nd
kind. However theta-function representation for a 3rd kind integral is
still lacking in system \eqref{new}.

\begin{proof}
Meromorphic functions are at hand since they are formed by
$\theta$-quo\-ti\-ents.  Derivatives of meromorphic functions are again
meromorphic ones but differentiation of $\theta_k(x)$ generates
$\Dtheta(x)$. The quotient $\Dtheta\,/\theta_1$ is proportional to the
Weierstrass $\zeta$-function which in turn represents a meromorphic
(\ie, 2nd kind) elliptic integral \cite{ell}; it is alone as genus
$g=1$. Without loss of generality we may set that the missing integral
of 3rd kind has two logarithmic singularities. The place of one of them
may be fixed at $x=0$ and the second place can be taken as a parameter.
Call it $\mathfrak{u}$. Residues of a corresponding differential are
opposite in sign and they can be moved to a common multiplication
constant. Such an integral $\III$ is unique (even for arbitrary $g$) up
to a holomorphic one(s) since all the other integrals are expressed
through $\III=\III(x;\mathfrak{u})$. Its derivatives are again
meromorphic functions and the process closes. It will suffice to add an
exponent of $\III$ and  integral $\III$ itself is given by the known
formula
\begin{equation}\label{III}
\III(x;\mathfrak{u})\DEF
\frac12\int\limits^{\;z}\!\frac{w+w_{\sss0}}{z-z_{\sss0}}
\frac{dz}{w}=
\ln\!\frac{\sigma(2\,x-2\,\mathfrak{u})}{\sigma(2\,x)}\,
\re^{2\,\zeta(2\,\mathfrak{u})\,x}_{\mathstrut}\,,
\end{equation}
where $z=\wp(2\,x),\,w=\wpp(2\,x)$ and
$z_{\sss0}=\wp(2\,\mathfrak{u}),\, w_{\sss0}=\wpp(2\,\mathfrak{u})$.
Weierstrassian parameters $(\omega,\omp)$, presented in \eqref{III},
are replaced by the one quantity $\tau$ thanks to homogeneity relation
$\omega^2\wp(\omega\,x|\omega,\omp)=\wp(x|1,\tau)\FED\wp(x|\tau)$.
Holomorphic integral is absent in system \eqref{new} but is present in
a basis of Abelian integrals. Missing element $\Lambda_{\sss0}$ can be
formally adjoined by setting $\Lambda_{\sss0}(x|\tau)=x$ and
supplementing system \eqref{!} with equation
$\frac{d\Lambda_{\sss0}}{dx}=1$. Adding to \eqref{III} the holomorphic
integral $h\,\Lambda_{\sss0}$ and converting the right hand side of
\eqref{III} to the $\theta$-functions, one obtains that adjunction of
$\exp(\III)$ is equivalent to adjunction of the object
$\Lambda(x;\mathfrak{u}|\tau)$. Differentiating \eqref{III} and
converting it to the $\theta$'s, one arrives, after some
simplification, at the last equality in Eqs.~\eqref{!}.

If $\mathfrak{u}\in \mathbb{Z}\,\tau+\mathbb{Z}$ then the integral
$\III$ turns into a logarithm of meromorphic function: $\ln(\wp(x)-e)$.
There is nothing to adjoin.
\end{proof}

\begin{corollary}\label{C4}
Let potential be a finite-gap one and $g$-dimensional jacobian  split
into a product of the elliptic curves. Then differentiation of
$g$-dimensional functions
$\Theta\big(x\,\boldsymbol{U}+\boldsymbol{D}\big)$ reduces to a set of
the `$1$-dimensional' equations \eqref{!}. The functions
$\theta_k,\,\Dtheta$ and $\Lambda$, taken possibly with different
moduli $\tau$, form the differentially closed basis over which every
Novikov's equation of order $\leqslant 2g+1$ and  problem \eqref{1} are
integrated.
\end{corollary}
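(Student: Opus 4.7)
The plan is to reduce the multi-dimensional theta apparatus to the one-dimensional system \eqref{!} via the splitting hypothesis, and then invoke the $\Theta$-representation trivialization of Theorem~\ref{T8}. First I would make precise the higher-$g$ analogue of Proposition~\ref{P1}: when the $\boldsymbol\Pi$-matrix is equivalent, under a symplectic change of homology basis, to a block form whose elliptic part consists of moduli $\tau_1,\ldots,\tau_g$, one obtains, in the same spirit as \eqref{red}, a finite identity expressing $\Theta(x\boldsymbol{U}+\boldsymbol{D}\mid\boldsymbol\Pi)$ as a $\mathbb{C}$-linear combination of products $\prod_k \theta\AB{\varepsilon_k}{\delta_k}(U_k x+D_k\mid\tau_k)$, up to an exponential prefactor of the type occurring in \eqref{thetaAB}. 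All subsequent steps rest on such an identity.

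Next I would apply Theorem~\ref{T6} factor by factor. Each one-dimensional $\theta\AB{\varepsilon_k}{\delta_k}(U_k x+D_k\mid\tau_k)$ is differentiated within the autonomous system \eqref{new} over the constants $(\vartheta^2,\eta)$, which after rescaling $x\mapsto U_k x$ is precisely the first two blocks of \eqref{!}. Consequently the potential $u(x)$ produced by \eqref{its}, together with all its derivatives---and hence every differential-polynomial integral $E_j(\dpotu)=C_j$ delivered by a Novikov equation of order $\leqslant 2g+1$---lives in the differential field $\mathbb{C}_\partial(\theta_k,\Dtheta;\vartheta^2,\eta,\ldots)$. This gives the inclusion $\NTheta\subset\mathbb{C}_\partial(\theta_k,\Dtheta;\ldots)$ and establishes the first half of the corollary.

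For equation \eqref{1} I would invoke Theorem~\ref{T8}, according to which integration reduces to adjunction of a single linearly exponential divisor $\boldsymbol{\mathfrak{L}}_x(\p)$. On a reducible jacobian, the Abel map $\boldsymbol{\mathfrak{U}}(\p)$ splits into elliptic Abel maps $\mathfrak{u}_1,\ldots,\mathfrak{u}_g$ on the factor curves, and $\II(\p)\,x$ splits into a corresponding sum of second-kind exponents of type $\{\zeta(2\mathfrak{u}_k)-2\eta_k\mathfrak{u}_k\}\,x$. Applying the reduction identity with $x\boldsymbol{U}+\boldsymbol{D}+\boldsymbol{\mathfrak{U}}(\p)$ in place of $x\boldsymbol{U}+\boldsymbol{D}$ converts $\boldsymbol{\mathfrak{L}}_x(\p)$ into a finite sum of products of the one-dimensional divisors $\Lambda(x;\mathfrak{u}_k\mid\tau_k)$ from Theorem~\ref{T9}, each of which satisfies the third block of \eqref{!}. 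Combined with the already established closure of $\{\theta_k,\Dtheta\}$, this yields the announced differentially closed basis $\{\theta_k,\Dtheta,\Lambda\}$ for the full tower $\Nu\subset\NTheta\subset\Nu\langle\Psi^{\pm}\rangle$.

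The main obstacle I anticipate is the first step: a clean higher-$g$ reduction identity that tracks the exponential prefactor and makes the splittings of $\boldsymbol{U}$, $\boldsymbol{D}$, $\boldsymbol{\mathfrak{U}}(\p)$ and $\II(\p)$ compatible with the product-of-$\theta$'s structure. Once that purely theta-theoretic input is in hand---Proposition~\ref{P1} and the examples collected in \cite{38} indicate that it is a standard computation---everything else is a routine application of Theorems~\ref{T6}, \ref{T8}, and \ref{T9}.
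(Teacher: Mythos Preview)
The paper states Corollary~\ref{C4} without proof, treating it as an immediate consequence of Theorems~\ref{T6} and \ref{T9} together with the reduction mechanism exemplified by Proposition~\ref{P1}; the sentence following the corollary is the entirety of the paper's justification. Your plan is precisely the natural unpacking of that implicit argument: reduce $\Theta$ to products of $\theta$'s via a splitting identity, differentiate factorwise by \eqref{new}, and handle the $\Psi$-function via the $\boldsymbol{\mathfrak{L}}_x(\p)$-divisor of Theorem~\ref{T8} reduced to products of the $\Lambda$'s of Theorem~\ref{T9}. So your approach coincides with the paper's.

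One small caution: your description of how $\II(\p)$ splits is oversimplified. The paper's own worked example \eqref{mero} shows that on a reducible jacobian $\II(\lambda)$ is a linear combination of the elliptic $\Dtheta/\theta_1$-terms \emph{plus} an algebraic piece in $(\lambda,\mu)$ and holomorphic contributions; it is not a bare sum of $\zeta(2\mathfrak{u}_k)-2\eta_k\mathfrak{u}_k$ terms. This does not damage the differential-closedness claim, since all these extra pieces are $x$-constants and are absorbed into the parameter $h$ of $\Lambda$, but you should not assert the cleaner form. Your acknowledged ``main obstacle''---a general higher-$g$ reduction identity with compatible splittings of $\boldsymbol{U}$, $\boldsymbol{D}$, $\boldsymbol{\mathfrak{U}}(\p)$, $\II(\p)$---is exactly where the paper also stops short, giving only the $g=2$ formula \eqref{red} and a citation to \cite{38}.
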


In the framework of this corollary integrability of Novikov's equations
is a manifestation of differential closedness of the first two
equations in \eqref{!}. The third equation in \eqref{!} `integrates'
equations with a parameter and their consequences (see examples in
\cite{br3}). Here, `integrates' means that all these equations are
nothing more than combinations of system \eqref{!} and its derivatives.
Let us consider some examples.

\begin{example}
The two gap Lam\'e potential
$$
\Psi_{\mathit{\!xx}}=\big\{24\,\wp(2\,x|\tau)+\lambda\big\}\,\Psi\,.
$$
This is a classical example presented in many places \cite{38,GH}.
Corresponding solution expressed in terms of our objects reads as
follows:
\begin{equation}\label{6P}
\Psi^{\pm}=\frac{d}{dx}\frac{\Lambda(x;\pm\mathfrak{u}|\tau)}
{\theta_1(x|\tau)}\,,\quad h=\frac{\pm2\,\mu}
{3\,\lambda^2-12^2\,g_2^{}(\tau)}\,,\quad
\wp(2\,\mathfrak{u}|\tau)=\frac{\lambda^3+12^3\,g_3^{}(\tau)}
{36\,\lambda^2-12^3\,g_2^{}(\tau)}\,,
\end{equation}
$$
\mu^2=\bigl(\lambda^2-48\,g_2^{}(\tau)\big)
\big(\lambda^3-36\,g_2^{}(\tau)\,\lambda+ 432\,g_3^{}(\tau)\bigr)
\quad\big(\Leftrightarrow\eqref{hyper}\big)\,,
$$
where $g_2^{}(\tau)$, $g_3^{}(\tau)$ are the standard modular
$\tau$-representations for Weierstrassian parameters $a$, $b$ entering
into the curve $w^2=4\,z^3-a\,z-b$ \cite{ell}.  Novikov's equation
\eqref{N2} is satisfied under $(c_1^{},c_2^{})=(0,-672\,g_2^{}(\tau))$
and $R$-polynomial has the form
$$
R(\dpotu;\lambda)=\lambda^2-\frac12\,u\,\lambda+
\frac14\,u^2-36\,g_2^{}(\tau)\,,
$$
where $u=24\,\wp(2\,x|\tau)$.
\end{example}

\subsection{Non-elliptic example}
We consider here  a potential being no elliptic function but
expressible through the `elliptic' $\theta$-functions.

\begin{example}
The $\Psi$-function for non-elliptic potential \eqref{2gap}. It is a
nonlinear superposition of the one-gap $\Psi$-functions. If we denote
for brevity $z=Ux+A$ and $w=Vx+B$ we then derive that
\begin{equation}\label{psi2}
\Psi(x;\lambda)=
\frac{\Lambda\big(z{+}\frac12\,\tau;\mathfrak{U}_1\big|\tau\big)\,
\Lambda\big(w{+}\frac12;\mathfrak{U}_2\big|\varkappa\big)-
\Lambda(z;\mathfrak{U}_1|\tau)\, \Lambda(w;\mathfrak{U}_2|\varkappa)
}{ \theta_1\!\big(z{+}\frac12\,\tau\big|\tau\big)\,
\theta_1\!\big(w{+}\frac12\big|\varkappa\big)- \theta_1(z|\tau)\,
\theta_1(w|\varkappa)}\,\re^{\II(\lambda)x}_{\mathstrut}\,,
\end{equation}
where holomorphic integrals $\mathfrak{U}_1$, $\mathfrak{U}_2$ are the
certain linear combinations
$\mathfrak{U}_k=C_{\mathit{kj}}\mathfrak{u}_j$ of the elliptic
holomorphic ones $\mathfrak{u}_1$, $\mathfrak{u}_2$ because the curve
\eqref{hyper} corresponding to the $\Theta$-function \eqref{red} has
the form
\begin{equation}\label{P5}
\mu^2=\lambda(\lambda-1)(\lambda-\boldsymbol{a})
(\lambda-\boldsymbol{b}) (\lambda-\boldsymbol{ab})\FED P_5(\lambda)
\end{equation}
and is realized as covers of  two tori defined by moduli $\tau$ and
$\varkappa$:
\begin{equation}\label{cover}
\begin{aligned}
\wp(2\,\mathfrak{u}_1|\tau)+\vartheta_2^4(\tau)+
\vartheta_3^3(\tau)&=
3\,\frac{(1-\boldsymbol{a})(1-\boldsymbol{b})\lambda}
{(\lambda-\boldsymbol{a})(\lambda-\boldsymbol{b})}\,
\vartheta_3^4(\tau)\,,\\[0.3em]
\wp(2\,\mathfrak{u}_2|\varkappa)+
\vartheta_2^4(\varkappa)+\vartheta_3^3(\varkappa)&=
3\,\frac{(1-\boldsymbol{a})(1-\boldsymbol{b})
\lambda}{(\lambda-\boldsymbol{a})(\lambda-\boldsymbol{b})}\,
\vartheta_3^4(\varkappa)\,.
\end{aligned}
\end{equation}
These are the classical formulae by Jacobi \cite{38} presented in terms
of Weierstrass'  $\wp$ and the pair of branch points
$\{\boldsymbol{a},\boldsymbol{b}\}$ and moduli $\{\tau,\varkappa\}$ are
commonly written down for one another \cite{38}. All the information
concerning this curve can be found in \cite{38} and we omit details of
some calculations. We need to compute the integral $\II(\lambda)$.

Abelian integrals for the curve \eqref{P5} are expressed through
$\theta$, $\Dtheta$ and therefore derivation of the meromorphic
integral $\II(\lambda)$ is a routine calculation. An explanation is
that the reduction of holomorphic integrals to elliptic ones entails
the reduction of the meromorphic Abelian integrals to the meromorphic
elliptic ones. The latter  are expressed through Weierstrassian
$\zeta$-function, \ie, $\theta'\!/\theta$, and meromorphic elliptic
functions. First translate formulae for cover \eqref{cover} into the
language of $\theta$-functions:
\begin{equation}\label{ell}
\frac{\vartheta_2^2(\tau)}{\vartheta_3^2(\tau)}
\frac{\theta_4^2(\mathfrak{u}_1|\tau)}
{\theta_1^2(\mathfrak{u}_1|\tau)}=
\frac{(1-\boldsymbol{a})(1-\boldsymbol{b})\lambda}
{(\lambda-\boldsymbol{a})(\lambda-\boldsymbol{b})}\,,\qquad
\frac{\vartheta_2^2(\tau)}{\vartheta_3^2(\tau)}
\frac{\theta_4^2(\mathfrak{u}_1|\tau)}
{\theta_1^2(\mathfrak{u}_1|\tau)}=
\frac{\vartheta_2^2(\varkappa)}{\vartheta_3^2(\varkappa)}
\frac{\theta_4^2(\mathfrak{u}_2|\varkappa)}
{\theta_1^2(\mathfrak{u}_2|\varkappa)}\,.
\end{equation}
This is a complete set of equations determining the holomorphic
integrals $\mathfrak{u}_1$, $\mathfrak{u}_2$ as functions of $\lambda$.
The point $\lambda=\infty$ corresponds to the values
$\mathfrak{u}_1=\frac12\,\tau$ and $\mathfrak{u}_2=\frac12\,\varkappa$.
Drop out for the moment indication of modulus $\tau$ in the following
transformation of a meromorphic elliptic integral:
$$
\begin{aligned}
\int\!\!\!\!\frac{s\,ds}{\sqrt{4\,s^3-g_2^{}s-g_3^{}}}&= \left| s=
\frac{\pi^2}{12}\!\left(\vartheta_3^4+\vartheta_4^4-
3\,\vartheta_3^2\,\vartheta_4^2
\,\frac{\theta_3^2(\mathfrak{u}_1)} {\theta_4^2(\mathfrak{u}_1)}
\right)\right|\\[0.3em]
&=-\frac12\,\frac{\Dtheta\!
\big(\mathfrak{u}_1{-}\frac12\,\tau\big)}
{\theta_1\!\big(\mathfrak{u}_1{-}\frac12\,\tau\big)}
-2\,\eta\,\mathfrak{u}_1=\cdots
\end{aligned}
$$
On the other hand, the 1st formula in \eqref{ell} supplemented with use
of the standard quadratic $\vartheta,\,\theta$-identities brings this
integral into the following expression:
$$
\cdots=\mathrm{const}\int\!\!s\!\cdot\!
\frac{\lambda-\sqrt{\boldsymbol{a}\mathstrut}
\sqrt{\boldsymbol{b}}}{\sqrt{P_5(\lambda)}}\,d\lambda
\,\simeq\,\int\!\!\left(
\frac{(\lambda-\boldsymbol{a})(\lambda-\boldsymbol{b})}
{(1-\boldsymbol{a})(1-\boldsymbol{b})\lambda}+
\frac{\vartheta_2^4+\vartheta_3^4}{3\,\vartheta_3^4}\right)\!
\frac{\lambda-\sqrt{\boldsymbol{a}\mathstrut}
\sqrt{\boldsymbol{b}}}{\sqrt{P_5(\lambda)}}\, d\lambda\,,
$$
that is  meromorphic integral on the curve \eqref{P5} with a pole at
$\lambda=\infty$ and a surplus one at $\lambda=0$. Doing the same for
the second torus $(\mathfrak{u}_2)$ with modulus $\varkappa$, we obtain
one more meromorphic integral with the same infinities. Forming their
linear combination, we can construct the integral with a single
singularity at infinite point. After some algebraic simplifications the
sought-for result becomes:
\begin{equation}\label{mero}
\begin{aligned}
\II(\lambda)&=a\!\cdot\!
\frac{\Dtheta\!\big(\mathfrak{u}_1{-}\frac12\,\tau\big|\tau\big)}
{\theta_1\!\big(\mathfrak{u}_1{-}\frac12\,\tau\big|\tau\big)} +
b\!\cdot\!
\frac{\Dtheta\!\big(\mathfrak{u}_2{-}\frac12\,
\varkappa\big|\varkappa\big)}
{\theta_1\!\big(\mathfrak{u}_2{-}\frac12\,
\varkappa\big|\varkappa\big)}+
c\!\cdot\!\mathfrak{u}_1+d\!\cdot\!\mathfrak{u}_2\\[0.3em]
&= a\!\cdot\!
\frac{\Dtheta(\mathfrak{u}_1|\tau)}
{\theta_1(\mathfrak{u}_1|\tau)} +
b\!\cdot\! \frac{\Dtheta(\mathfrak{u}_2|\varkappa)}
{\theta_1(\mathfrak{u}_2|\varkappa)}+
\frac{(\lambda+p)\,\mu}{\lambda\,(\lambda-\boldsymbol{a})
(\lambda-\boldsymbol{b})}+ q\!\cdot\!\mathfrak{u}_1+
r\!\cdot\!\mathfrak{u}_2\,,
\end{aligned}
\end{equation}
where constants $\{a,b,c,d,p,q,r\}$ depend only  on parameters of the
potential \eqref{2gap}, \ie, on $\{\tau$, $\varkappa$, $A$, $B$, $V\}$,
and are independent of $\lambda$. Again, after careful co-ordination of
all the moduli and normalizing constants the direct check of \eqref{1},
\eqref{2gap}, \eqref{psi2}, and \eqref{mero} becomes a good exercise in
a differential $\theta$-calculus. We mention in passing that this
example cannot be elaborated in the framework of the standard elliptic
soliton theory \cite{38}.
\end{example}

\subsection{A new treatment of the spectral parameter}
In  proof of  Theorem~\ref{T9} parameter $\mathfrak{u}$ was the only
`external' parameter independent of `internal' parameters of the curve
(moduli). On the other hand, the only parameter being external to the
field $\Nu$ and equation \eqref{1} is $\lambda$. It plays an isolated
role. Apart from the fact that it is merely present in equation, it may
be treated as an object arising from the two independent `mechanisms':
1) adjunction of a transcendental element and, on the other hand, 2)
differential closedness of all the Abelian integrals. As evidenced by
the foregoing and formula \eqref{final}, these are the same things:

\begin{itemize}
\item \textit{The logarithmic singularity in a canonical integral
of third kind is  arbitrary and independent of moduli. The property
of the theory to be integrable is, by construction, independent of
it. It may therefore always be thought of as a $($free\/$)$
spectral variable}.
\end{itemize}
The converse is also true. Differentiation of the 3rd kind integrals
yields other integrals and functions. In other words, informally
speaking, one may say that

\begin{itemize}
\item \textit{Closed class of ODEs integrable through
$\Theta(x\,\boldsymbol{U}+\boldsymbol{D})$ is in fact integrable in
terms of $\{\Theta$, $\Theta'$, $\boldsymbol{\mathfrak{L}}_x\}$ and
`owes' to contain an external $($except for moduli\/$)$
\hbox{parameter $\p$}}.
\end{itemize}

Indeed, there is one fundamental logarithmic integral $\III$ for each
algebraic curve and it has a single parameter
$\p\Leftrightarrow\lambda$. In the elliptic case this $\III$ has form
\eqref{III} and for arbitrary genera it is expressed through the
$g$-dimensional $\Theta$'s \cite{baker}. The one fold logarithmic
$\partial_x$-derivative of \eqref{final}, \ie, derivatives of
$\III(\gamma\mathrm{'s};\p)$, yields the rational functions of
$(z,w)(\gamma\mathrm{'s})$ and therefore only meromorphic objects
remain since integrals themselves disappear. As a rough guide we have
here $\partial_x\exp(\III)=\III_x\!\cdot\exp(\III)$. This linear in
$\exp(\III)$ equation\footnote{An important remark is in order. We
might not say the same as applied to the `pure spectral' object
\eqref{dual} since non-indefinite integral remains. Again, Weierstrass'
theorem does the job. The `one-fold $\partial_x$', which is equivalent
here to the `$\partial_x$-differential closedness', explains
\textit{why the spectral equations are always linear.}} is treated as a
spectral one. Coefficients of this equation (more precisely its
$\lambda$-independent pieces) can be thought of as potential. The
quantity $\p$ should always be distinguished as an external one
because, otherwise, the following chain
$$
\boxed{\phantom{\big|}\textup{external\ } \lambda\quad
\hbox{\scalebox{1.3}[1]{$\Leftrightarrow$}}\quad
\partial_x\textup{-closedness}\quad
\hbox{\scalebox{1.3}[1]{$\Leftrightarrow$}}
\quad \textup{`$\Psi$\!-linearity'}\:\,}
$$
is destroyed altogether. Moreover, the $x$-dependence
$\Theta(x\,\boldsymbol{U}+\boldsymbol{D})$ is not bound to be a linear
one and spectral equations must not necessarily be of the form
\eqref{A}. Counterexamples in Sect.~\ref{9} illustrate these points.

\section{Definition of $\theta$ through Liouvillian extension\label{8}}
Insomuch as the object $\Lambda(x;\mathfrak{u}|\tau)$ is in fact a
theta-function with a parameter, we can use the differential equations
described above as the basis for a definition of the theta's themselves
and, in particular, consider character of their integrability.

\begin{proposition}\label{P3}
System \eqref{new} has the two algebraic $($rational\/$)$ integrals
$$
\vartheta_2^2\,\theta_4^2-
\vartheta_4^2\,\theta_2^2=A_1\,\vartheta_3^2\,\theta_1^2\,, \qquad
\vartheta_2^2\,\theta_3^2-
\vartheta_3^2\,\theta_2^2=A_2\,\vartheta_4^2\,\theta_1^2
$$
generalizing  the famous Jacobi $\theta$-identities when $A_1\ne 1\ne
A_2$.
\end{proposition}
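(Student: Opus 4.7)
My plan is to verify both identities as first integrals of the autonomous system \eqref{new}, by computing the $x$-derivative of the proposed ratios and showing it is zero. First I would specialize the first equation of \eqref{new} at $(\varepsilon,\delta)\in\{(1,1),(1,0),(0,0),(0,1)\}$. In all four cases $\big[\tfrac{\delta}{2}\big]\,\varepsilon=0$, so the sign factor is $+1$; after reducing the characteristics modulo $2$ one reads off
\[
\partial_x\theta\AB{1}{1}=\Dtheta,\qquad
\partial_x\theta_k=\frac{\Dtheta}{\theta\AB{1}{1}}\,\theta_k-\pi\,\vartheta_k^2\,\frac{\theta_n\,\theta_m}{\theta\AB{1}{1}}\quad(\{k,n,m\}=\{2,3,4\}),
\]
the first relation being automatic since $\vartheta\AB{1}{1}=0$.

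Next, set $F_1:=\vartheta_2^2\theta_4^2-\vartheta_4^2\theta_2^2$ and $G_1:=\vartheta_3^2\theta_1^2$. Differentiating $F_1$ by means of the above formulas for $k=2,4$ gives
\[
\partial_x F_1\;=\;\frac{2\,\Dtheta}{\theta\AB{1}{1}}\,F_1\;-\;\frac{2\pi}{\theta\AB{1}{1}}\bigl(\vartheta_2^2\vartheta_4^2-\vartheta_4^2\vartheta_2^2\bigr)\,\theta_2\,\theta_3\,\theta_4\;=\;\frac{2\,\Dtheta}{\theta\AB{1}{1}}\,F_1,
\]
the crucial point being that the two nonlinear pieces cancel by plain commutativity of the $\vartheta_k^2$. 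The companion calculation is immediate: $\partial_x G_1=(2\,\Dtheta/\theta\AB{1}{1})\,G_1$. Hence $\partial_x(F_1/G_1)=0$ and $F_1=A_1 G_1$ with $A_1\in\mathbb{C}$ a first integral of \eqref{new}. The second identity follows from the identical argument after the replacement $(\theta_4,\vartheta_4)\leftrightarrow(\theta_3,\vartheta_3)$ inside the first summand of $F_1$ and with $G_2:=\vartheta_4^2\theta_1^2$; the analogous cross terms cancel by the same commutativity, producing $A_2\in\mathbb{C}$.

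I do not anticipate any real obstacle: the whole content is the cancellation of the nonlinear cross terms in $\partial_x F_j$, enforced by commutativity of the $\vartheta^2$'s together with the pleasant fact that the sign symbol $(-1)^{[\delta/2]\varepsilon}$ in \eqref{new} equals $+1$ on all four relevant pairs of characteristics, so one need not track signs. The \emph{classical} Jacobi identities correspond to $A_1=A_2=1$ and are recovered when the $\theta_k$ are the genuine Jacobi series with their standard constants $\vartheta_k$; the possibility $A_j\neq 1$ simply reflects that, viewed as an autonomous system over the constant field $\mathbb{C}(\eta,\vartheta^2)$, \eqref{new} admits $A_1,A_2$ as two independent first integrals whose values are fixed by the initial data of the solution.
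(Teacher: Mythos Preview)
Your proof is correct and is exactly the approach taken in the paper: the paper's proof consists of the single sentence ``The straightforward calculation shows that $\partial_xA_1=\partial_xA_2=0$,'' and you have supplied that calculation in full, including the key observation that the nonlinear cross terms cancel because the $\vartheta_k^2$ coefficients commute.
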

\begin{proof}
The straightforward calculation shows that $\partial_xA_1=\partial_x
A_2=0$.
\end{proof}

In a nutshell, the differential genesis of the object $\Lambda$ is as
follows. Function $\Dtheta$ is determined differentially through
$\theta_1$. Therefore two functions with two arbitrary constants solve
the system \eqref{new}. One  of constants serves a homogeneity
$\theta\mapsto C\,\theta$ of \eqref{new}. Another one $\mathfrak{u}$ is
non-algebraic and is related to an autonomy  of equations \eqref{new}.
Hence the two transcendental functions $\theta_1(x-\mathfrak{u}|\tau)$
and $\theta_2(x-\mathfrak{u}|\tau)$ remain. These functions are
represented, up to a shift and holomorphic integral, by the one object
$\Lambda(x;\mathfrak{u}|\tau)$.

\begin{theorem}\label{T10}
Differential equations  \eqref{!}  are algebraically integrable.
\end{theorem}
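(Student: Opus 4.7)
The plan is to build a concrete Liouvillian tower for system \eqref{!} in which every transcendental step is algebraic in the sense of Remark~\ref{R4}: an indefinite integral of an algebraic function, an inversion of such an integral, or an exponential of such an integral. Autonomy of \eqref{!} together with the rational integrals supplied by Proposition~\ref{P3} let us factor out the homogeneity $\theta\mapsto C\theta$ and reduce the $(\theta_k,\Dtheta)$-block to a one-dimensional dynamics on an elliptic curve.

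First I would introduce the ratios $\phi_k:=\theta_k^2/\theta_1^2$ ($k=2,3,4$) and $\xi:=\Dtheta/\theta_1=(\ln\theta_1)_x$. The two rational integrals of Proposition~\ref{P3} rewrite in these variables as the two \emph{linear} relations $\vartheta_2^2\phi_4-\vartheta_4^2\phi_2=A_1\vartheta_3^2$ and $\vartheta_2^2\phi_3-\vartheta_3^2\phi_2=A_2\vartheta_4^2$, expressing $\phi_3,\phi_4$ affinely through $\phi_2$. Computing $(\phi_2)_x$ from the first equation in \eqref{!} gives $(\phi_2)_x\propto\sqrt{\phi_2\phi_3\phi_4}$, whence, after squaring and substitution, $(\phi_{2,x})^2$ is a \emph{cubic polynomial} in $\phi_2$ with coefficients in the base field $\mathbb{C}(\eta,\vartheta^2,A_1,A_2)$. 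This is the Weierstrass equation $(\wp')^2=4\wp^3-g_2\wp-g_3$ in inessentially transformed coordinates.

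Next, I would integrate this Weierstrass equation by the indefinite quadrature $x=\tfrac12\!\int\!dz/\sqrt{4z^3-g_2z-g_3}$ and invert it --- one step algebraic in the sense of Remark~\ref{R4}. A short calculation from the second equation in \eqref{!} shows that $\xi_x$ is \emph{linear} in $\phi_2$, so $\xi$ itself follows by one further quadrature of an algebraic function; then $\theta_1=\exp\!\int\xi\,dx$ by one exponential quadrature, and $\theta_{2,3,4}$ become algebraic over the field already produced via Proposition~\ref{P3}. Finally, the third equation in \eqref{!} is linear in $\Lambda$ with coefficients algebraic over the constructed field, so $\Lambda$ is the exponential of a third-kind Abelian integral of exactly the type \eqref{III}, closing the tower.

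The main obstacle will be the bookkeeping for the four constants of integration and for the characteristic sign factors appearing in \eqref{new}: one must confirm that the discriminant of the cubic is non-degenerate in the generic case, that $A_1,A_2$ play the role of the remaining two free constants (the third being absorbed by the autonomy $x\mapsto x+c$, the fourth by the homogeneity $\theta\mapsto C\theta$), and that the chain of quadratures really stays inside the algebraic class of Remark~\ref{R4}. Once this is done, the four successive operations --- Weierstrass inversion, quadrature of $\xi$, exponential of $\xi$, and quadrature of the $\Lambda$-integrand --- manifestly realize \eqref{!} as algebraically integrable.
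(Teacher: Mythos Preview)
Your approach is correct and runs closely parallel to the paper's, though the packaging differs. The paper does not introduce the ratios $\phi_k$ or invoke Proposition~\ref{P3}; instead it shows by direct computation that \emph{each} $\theta_k$ individually satisfies one and the same fifth-order autonomous ODE in the single combination $F:=(\ln\theta)_{xx}-2\kappa$ (with $-\kappa=2\eta+\tfrac{\pi^2}{6}(\vartheta_3^4+\vartheta_4^4)$), namely $\big(F_x^{-1}(F_x^2/F)_x\big)_x+8F_x=0$, and then integrates this via the Weierstrass inversion $\int^F dz/\sqrt{z(z-\boldsymbol a)(z-\boldsymbol b)}=2\ri x+\boldsymbol c$, arriving at $\theta=\exp\!\iint\Xi\cdot e^{\kappa x^2+\boldsymbol d x+\boldsymbol e}$. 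In fact your $\phi_2$ and the paper's $F$ are the same object up to a constant factor: your computation of $\xi_x$ gives $(\ln\theta_1)_{xx}=-\pi^2\vartheta_3^2\vartheta_4^2\,\phi_2-4\{\eta+\tfrac{\pi^2}{12}(\vartheta_3^4+\vartheta_4^4)\}$, and $\kappa$ is chosen precisely so that $F=-\pi^2\vartheta_3^2\vartheta_4^2\,\phi_2$. Both routes therefore reduce to one and the same Weierstrass equation.

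What each buys: your route makes the reduction mechanism transparent (two rational integrals kill two dimensions, then ratios kill the homogeneity), and it handles all four $\theta_k$ simultaneously. The paper's route emphasises that the individual $\theta$ obeys a \emph{fifth}-order equation (not third, as one might expect from Weierstrass), that the ensuing quadrature for $\theta$ carries an unavoidable \emph{quadratic} exponential factor $e^{\kappa x^2}$, and that only the single combination $\kappa$ of the $(\vartheta,\eta)$-parameters survives in that equation; this is then parlayed into Corollary~\ref{C5} on defining $\theta$ through a meromorphic elliptic integral. One small correction to your bookkeeping: the $(\theta_k,\Dtheta)$-block is five-dimensional, so there are five constants, not four---the fifth is the additive constant in your quadrature $\xi=\int\xi_x\,dx$, which is exactly the paper's $\boldsymbol d$ in $e^{\kappa x^2+\boldsymbol d x+\boldsymbol e}$.
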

\begin{proof}
By a direct computation one can show that any solution
$\theta_k=\theta$ of the system \eqref{!} satisfies the same 5th order
ODE
\begin{equation}\label{F}
\mbig[9](\frac{1}{F_x}\bigg(\frac{\pow{F}{x}{2}}{F}
\bigg)_{\!\!\!x}\mbig[9])_{\!\!\!x}
+8\,F_x=0\,,\quad F=(\ln\theta)_{\mathit{xx}}^{}
-2\,\kappa\,,\quad -\kappa\DEF 2\,\eta+\frac16\,\pi^2\,
(\vartheta_3^4+\vartheta_4^4)\,.
\end{equation}
Therefore, not taking into account $\mathfrak{u}$, there is only one
essential parameter in equations \eqref{!}, \ie, parameter $\kappa$.
From this it follows that
$$
F=\Xi(x;\boldsymbol{a},\boldsymbol{b},\boldsymbol{c}):\quad
\int\limits^{\;F}\!\!
\frac{dz}{\sqrt{z(z-\boldsymbol{a})(z-\boldsymbol{b})}}=
2\,\ri\,x+\boldsymbol{c}
$$
and integration is thus completed if, according to definition in
Remark~\ref{R4}, we  adjoin the inversion operation $\Xi$:
\begin{equation}\label{necanon}
\theta=\exp\!\!\int\limits^{\;x}\!\!\mbig[7]\{
\hbox{\small$\ds\int\limits^{\;x}$}
\Xi(y;\boldsymbol{a},\boldsymbol{b},\boldsymbol{c})\,dy
\mbig[7]\}\, dx \cdot
\re^{\kappa\,x^2+\boldsymbol{d}\,x+\boldsymbol{e}}_{}\,,
\end{equation}
where $\boldsymbol{a},\boldsymbol{b},\boldsymbol{c},
\boldsymbol{d},\boldsymbol{e}$ are the integration constants. Of
course, the inversion function $\Xi$ here bears no relation to ratios
of the $\theta$-series. Integration of equations for functions
$\Dtheta$ and $\Lambda$ is obvious.
\end{proof}

\begin{remark}\label{R8}
In a separate work we shall show that the algebraic integrability above
can be supplemented with a Hamiltonian formulation $\dot
X=\Omega\,\nabla\mathcal{H}(X)$ to the system \eqref{!} and its
Lagrangian description.
\end{remark}

It is noteworthy, that  variable $\theta$ satisfies an equation of
fifth order, not third, as it would be expected from the well-known
$\wp$-equation of Weierstrass \cite{ell}. Another point that should be
mentioned here is the fact that algebraic integrability of equations
\eqref{!} leads not merely to the $\theta$-function itself but  to the
elliptic $\boldsymbol{\mathfrak{L}}_x$-divisor  and even its
non-canonical extension by the quadratic exponent. (Notice that
constant $\kappa$ depends on modulus but constant $\boldsymbol{d}$ is
free.) Further, the two-fold integration of the \textit{transcendental}
inversion operation in \eqref{necanon} can be reduced to   integration
of an algebraic function---our base operation.

\begin{corollary}\label{C5}
The $\theta$-function can be defined through a meromorphic elliptic
integral.
\end{corollary}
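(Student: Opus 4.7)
The plan is to read off the corollary directly from formula \eqref{necanon} in Theorem~\ref{T10}, using the fact that the only transcendental ingredient in that formula is the inversion $\Xi$ of an elliptic integral of the first kind. First, I would note that the defining relation for $\Xi$, namely
$$\int\limits^{\;F}\!\!\frac{dz}{\sqrt{z(z-\boldsymbol{a})(z-\boldsymbol{b})}}=2\,\ri\,x+\boldsymbol{c},$$
identifies $F=\Xi(x;\boldsymbol{a},\boldsymbol{b},\boldsymbol{c})$ as a meromorphic elliptic function of $x$: it is an affine translate of a Weierstrass $\wp$-function on the cubic curve $w^2=z(z-\boldsymbol{a})(z-\boldsymbol{b})$, whose modulus is determined (up to an isogeny of order~2 absorbed by the factor $2\ri$) by the pair $\{\boldsymbol{a},\boldsymbol{b}\}$, and hence ultimately by $\kappa$ through the defining constants in \eqref{F}.

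Next, I would perform one of the two integrations in \eqref{necanon} explicitly. Since $\Xi$ is meromorphic elliptic, the primitive
$$J(x)\DEF\int\limits^{\;x}\!\Xi(y;\boldsymbol{a},\boldsymbol{b},\boldsymbol{c})\,dy$$
is by definition a \emph{meromorphic elliptic integral}, i.e.\ the $\zeta$-type object associated with the above cubic, augmented by a holomorphic (linear) piece. This $J(x)$ is the unique transcendental generator that the whole construction needs: every other operation in \eqref{necanon} is elementary (polynomial or exponential in $x$).

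It then remains to exhibit $\theta$ as built from $J$ by one further quadrature plus an exponential. From the substitution $F=(\ln\theta)_{\mathit{xx}}-2\,\kappa$ one integrates once to get $(\ln\theta)_x=J(x)+2\,\kappa\,x+\boldsymbol{d}$, and once more to obtain
$$\theta(x)=\exp\!\left\{\int\limits^{\;x}\!J(y)\,dy\,+\,\kappa\,x^2+\boldsymbol{d}\,x+\boldsymbol{e}\right\},$$
which is exactly \eqref{necanon} after replacing the inner primitive of $\Xi$ by $J$. Thus $\theta$ is \emph{defined} (up to the normalization constants $\boldsymbol{c},\boldsymbol{d},\boldsymbol{e}$ that select the characteristic and the scale) by the single meromorphic elliptic integral $J$.

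The main obstacle is bookkeeping rather than analysis: one must verify that the cubic $z(z-\boldsymbol{a})(z-\boldsymbol{b})$ emerging from Theorem~\ref{T10} carries a modulus isomorphic to the original $\tau$, so that $\Xi$ really lives on the same torus as the $\theta\AB{\varepsilon}{\delta}(x|\tau)$ we started with, and that the five integration constants $\{\boldsymbol{a},\boldsymbol{b},\boldsymbol{c},\boldsymbol{d},\boldsymbol{e}\}$ collapse, once the $\vartheta$-identities of Proposition~\ref{P3} and the homogeneity $\theta\mapsto C\,\theta$ are imposed, to the two genuinely essential parameters (the modulus and the translation $\mathfrak{u}$) that parametrize a classical $\theta$-function with characteristic. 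Once this identification is made, Corollary~\ref{C5} is a direct reading of the formula above.
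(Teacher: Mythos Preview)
Your proposal is correct and takes essentially the same approach as the paper: both observe that the inner primitive $J(x)=\int^{x}\Xi\,dy$ in formula~\eqref{necanon} is a second-kind (meromorphic) elliptic integral, which is the entire content of the corollary. The paper phrases this via the explicit change of variable
$\int^{x}\Xi\,dy=\int^{\,\Xi(x)}\! z\,dz\big/\sqrt{z(z-\boldsymbol{a})(z-\boldsymbol{b})}$,
whereas you phrase it by identifying $\Xi$ as a $\wp$-type function and $J$ as the associated $\zeta$-type object; these are the same computation, and your additional bookkeeping about moduli and constants goes beyond what the corollary requires.
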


To prove this it will suffice to make the following  substitution in
formula \eqref{necanon}:
$$
\int\limits^{\;x}\!
\Xi(y;\boldsymbol{a},\boldsymbol{b},\boldsymbol{c})\,dy=
\!\!\int\limits^{\;\Xi(x;\boldsymbol{a},
\boldsymbol{b},\boldsymbol{c})}
\!\!\!\!\!\!\!\!\!\!\!\!
\frac{z\,dz}{\sqrt{z(z-\boldsymbol{a})(z-\boldsymbol{b})}}\,.
$$

By this we obtain somewhat nonstandard way of introduction of the
$\theta$-functions. To all appearances, Tikhomandritski\u\i\ \cite{tih}
was the first to point out a way  of definition of the $\theta$ through
a meromorphic integral\footnote{He does not mention the quadratic
extension and differential closedness of the set
$\{\theta_k,\,\Dtheta\}$, however.} but his note \cite{tih} went
unnoticed in the literature. He poses a question about the natural
going from elliptic integrals to the theta-functions and presents the
mode of transition between these transcendents by introducing the
integral of the 2nd kind elliptic integral. Indeed, rewriting formula
\eqref{necanon} in the following form
\begin{equation}\label{exp}
\theta(x)=
\exp\!\! \int\limits^{\;x}\!\mbig[9]\{
\!\!
\hbox{\small$\ds\int\limits^{\;\Xi(x)}$}
\!\!
\Mfrac{z\,dz}{\sqrt{z(z-\boldsymbol{a})(z-\boldsymbol{b})}}
\!\mbig[9]\}\,dx\cdot
\re^{\kappa\,x^2+\boldsymbol{d}\,x+\boldsymbol{e}}_{}\,,
\end{equation}
we observe that such a way of introduction of a $\theta$-function
\textit{is in effect the result of Liouvillian extension of a
meromorphic integral}, \ie, adjoining an \textit{exponent of integral
of such an integral}. By this means we may adopt this point as a
\textit{differential definition} of the $\theta$  \`a la Liouville and,
subsequently, construct all the other objects of the theory:
meromorphic (algebraic) functions are the $\theta$-ratios, Abelian
integrals of 2nd kind are expressed through just introduced meromorphic
integral (or, which is the same, the $\theta'$), and the 3rd kind
integrals are the logarithmic ratios of the $\theta$'s with free
parameters (the $\Lambda$-objects). Holomorphic integrals are of course
the independent objects; they are not defined/determined through any
other ones. An important role of a meromorphic integral was already
observed by Clebsch \& Gordan in preface to their book
\cite[p.~VI]{clebsch} on the base of Jacobi's formula
$$
\frac{\mathit{\Theta}(u)}{\mathit{\Theta}(0)}=
\exp\!\int\limits_0^{\;u}\!\!Z(u)\,du\,,
$$
where $Z(u)$ is a Jacobi zeta-function notation in the theory of
elliptic functions  \cite{ell}.

The above differential properties of the $\Theta$'s splittable to
$\theta$'s raise the question as to whether the general
multi-dimensional $\Theta$-functions admit the similar `differential
kind'  definition. In particular, whether exists the purely Liouvillian
definition of an $x$-section $\Theta(x\,\boldsymbol{U}+D)$ like formula
\eqref{exp}\footnote{Roughly speaking, one needs an extensive
strengthening of Theorem~\ref{T7}; whence it follows that
$$
-2\ln_{\mathit{xx}}\!\!\Theta(x\,\boldsymbol{U}+\boldsymbol{D})=
\ln_{\mathit{xx}}\!\!\Psi+(\ln_x\!\!\Psi)^2+\mathrm{const}
$$
and, since the $\Psi$ is an exponent of the 3rd kind Abelian integral
(formula \eqref{final}), that is $\Psi=\exp\!\III(\gamma\text{'s})$,
the object $\Theta(x\,\boldsymbol{U}+\boldsymbol{D})$ itself is
computed as a `Liouvillian extension of Abelian integrals':
$$
\Theta(x\,\boldsymbol{U}+\boldsymbol{D})=\exp-\frac12\!\!
\left\{\III(\gamma\text{'s})+
\int\!\!\!\!\!\!\!\!\int\!\!\!\!\big[\III(\gamma\text{'s})\big]_x^2\,
dx\,dx \right\}\re^{a\,x^2+b\,x+c}
$$
which is a reminiscence of formula \eqref{necanon}.} or, if any, the
closed set of \textit{partial DEs} defining the complete set of the
general $\Theta,\,\Theta'(\boldsymbol{z})$-functions as ones of the $g$
arguments $\boldsymbol{z}$? Some relations between $\Theta$'s and
meromorphic Abelian integrals can be found in lectures by Weierstrass
(though no really this point has been mentioned in the modern
literature) but the question about \textit{closed and differentially
Liouvillian definition} (if it exists) of a $4^g$-set of the
$g$-dimensional $\Theta$-functions and associated derivatives $\Theta'$
remains an important open problem.

\section{Non-finite-gap integrable counterexamples\label{9}}
Definition of  integrability domain is not a subject of the
$\Theta$-function techniques. Therefore we may generate integrable
equations by any way differed from the classical FG-structure defined
by formula \eqref{theta} and Theorem~\ref{T5}. For example, equations
coming no from operators $\bL(\pot;\partial_x)$ by taking the canonical
spectral equation $\bL(\pot;\partial_x)\,\Psi=\lambda\,\Psi$ form in
general the operator $\lambda$-pencils
$\bL(\pot;\partial_x,\lambda)\,\Psi=0$, say, \eqref{NLS}. Their
solution structure is not known a priori\footnote{It is well known,
however, that Eq.~\eqref{NLS} is related to a matrix canonical
eigenvalue problem.}.  Moreover, we can even construct an equation
fitting no in the FG-scheme but having the same formal
$\Theta$-function form of solution.

\begin{example}
Omitting in notation the elliptic modulus $\tau$, elucidate the said
above by the following modification of the 2-gap Lam\'e equation:
$$
\Psi_\mathit{\!xx}=\big\{24\,\wp(2\,x)+8\,\wp(2\,x-\mathfrak{u})+
16\,\wp(\mathfrak{u})\big\}\,\Psi\,.
$$
It has a solution of the formal 2-gap Baker--Akhiezer form \eqref{6P}:
\begin{equation}\label{BA1}
\Psi(x;\mathfrak{u})= \frac{d}{dx}\frac{\Lambda(x;\mathfrak{u})}
{\theta_1(x)}\,,\qquad h=4\,\zeta(\mathfrak{u})-
2\,\zeta(2\,\mathfrak{u})
\end{equation}
(exercise: check this solution). By Theorems~\ref{T9} and \ref{T10}
this example is algebraically integrable over
$\mathbb{C}_\partial\big(\wp(2x),\wp(2x-\mathfrak{u})\big)$ (and over
$\mathbb{C}_\partial(\theta_1,\Lambda)$, of course) with solvable
Galois group but it has little in common with commutative
Burchnall--Chaundy operators, BA-function, or spectral lacunae. Formula
\eqref{BA1} shows that this $\Psi$-function  has no even a pole at
point $2\,x=\mathfrak{u}$ where  potential does. This pole depends on a
spectral parameter $\mathfrak{u}$ lying on an elliptic curve.
\end{example}

Nevertheless this example should not be considered as `too artificial'
because there exist the $\Theta$-function integrable models having
algebraic curves with $x$-de\-pen\-dent branch-points. Ernst's
equations in general relativity \cite{korotkin} provide a nice example
along these lines.

\subsection{Hermite's operator pencil\label{9.1}}
Consider now, in the framework of Picard--Vessiot theory, Hermitian
equation \eqref{hermite} itself. It is not a Burchnall--Chaundy
operator but the operator $\lambda$-pencil. By virtue of formulae
\eqref{hermite}--\eqref{pm}, we define this pencil over field $\Nu$ and
repeat  arguments about $\lambda$-dependence of  $R$.

One of solutions to this pencil is not exponential but a purely Abelian
meromorphic function \eqref{factor}, \ie, differential polynomial
$$
R_1=R(\dpotu;\lambda)\FED\boldsymbol{\mathrm{P}}\ni\Nu\,.
$$
Since base of solutions to Eq.~\eqref{hermite} is
$\{\pow{\Psi}{\!+}{2},\,\Psi_{\!+}\Psi_{\!-},\,\pow{\Psi}{\!-}{2}\}$
and $\pow{\Psi}{\!+}{2}\notin\Nu$, we may put the second solution as a
square of the BA-function $R_2=\pow{\Psi}{\!+}{2}$, where $\Psi_{\!+}$
is an adjoint transcendent
$$
\Psi_{\!+}\DEF\exp\!\!\!\int\limits^{\,\,x}\!\!
\frac{\boldsymbol{\mathrm{P}}_{\!\!x}+2\,\mu}
{2\,\boldsymbol{\mathrm{P}}}\,dx\,.
$$
The third solution is $R_3=\pow{\Psi}{\!-}{2}$ and we obtain that
\begin{equation}\label{prop}
R_1=\boldsymbol{\mathrm{P}}\,,\qquad R_2=\pow{\Psi}{\!+}{2}\,,\qquad
R_3=\frac{\boldsymbol{\mathrm{P}}^2}{\pow{\Psi}{\!+}{2}}\,.
\end{equation}
Hence rationality domain is the same as in Theorem~\ref{T1}, that is
$\Nu\langle\Psi_{\!+}\rangle$, and this extension is a Liouvillian one
of the transcendence degree 1 (we consider only the generic case
$\lambda\ne E_j$). We therefore can obtain the following result.

\begin{theorem}\label{T11}
Under the generic $\lambda\ne E_j$ the Galois group of Hermite's
equation \eqref{hermite} defined over $\Nu$ is connected and isomorphic
to the diagonal group $\G=\mathrm{Diag}(1,\alpha,\alpha^{\smin1})$.
Equations \eqref{hermite} is factorizable over field $\Nu$.
\end{theorem}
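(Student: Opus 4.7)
The plan is to exploit the classical identification of \eqref{hermite} with the second symmetric power of \eqref{1}, already noted in the paper, so that the Picard--Vessiot extension of \eqref{hermite} sits naturally inside that of \eqref{1} and inherits its Galois structure.

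First I would take as basis of the three-dimensional solution space of \eqref{hermite} the symmetric-square basis $\{(\Psi^+)^2,\,\Psi^+\Psi^-,\,(\Psi^-)^2\}$. By \eqref{pm}--\eqref{muR} the middle element satisfies $\Psi^+\Psi^- = R = \boldsymbol{\mathrm{P}}\in\Nu$, which matches the first entry $R_1$ of \eqref{prop}; the outer entries are $R_2=(\Psi^+)^2$ and $R_3=(\Psi^-)^2 = \boldsymbol{\mathrm{P}}^2/(\Psi^+)^2$. Hence the Picard--Vessiot extension of \eqref{hermite} over $\Nu$ is exactly $\Nu\langle(\Psi^+)^2\rangle\subset \Nu\langle\Psi^\pm\rangle$, and any automorphism of this smaller extension is induced by restriction of an automorphism of the bigger one.

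Second, I would invoke Theorem~\ref{T1}: for generic $\lambda\ne E_j$ the Galois automorphisms of $\Nu\langle\Psi^\pm\rangle$ act as $\Psi^+\!\mapsto\beta\,\Psi^+$, $\Psi^-\!\mapsto\beta^{-1}\Psi^-$ with arbitrary $\beta\in\mathbb{C}^*$. Computing the induced action on the symmetric-square basis gives $\{(\Psi^+)^2,\,\Psi^+\Psi^-,\,(\Psi^-)^2\}\mapsto\{\beta^2(\Psi^+)^2,\,\Psi^+\Psi^-,\,\beta^{-2}(\Psi^-)^2\}$. Placing the fixed element $\boldsymbol{\mathrm{P}}\in\Nu$ in the first slot and setting $\alpha=\beta^2$ (still ranging over all of $\mathbb{C}^*$ since squaring is surjective on the connected group $\mathbb{C}^*$) yields the group $\G=\mathrm{Diag}(1,\alpha,\alpha^{-1})$ of the statement. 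Connectedness is automatic from the fact that $\alpha$ ranges continuously; one only has to check that $(\Psi^+)^2$ is genuinely transcendental over $\Nu$, which follows because $\Psi^+$ already is (by Theorem~\ref{T1}) and its square cannot become algebraic without $\Psi^+$ being so, contradicting the structure of \eqref{pm} for generic $\mu$.

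Third, factorizability over $\Nu$ is an immediate consequence of having an explicit solution $R_1=\boldsymbol{\mathrm{P}}\in\Nu$: the standard d'Alembert/Frobenius reduction produces a first-order right factor $\partial_x-\boldsymbol{\mathrm{P}}_x/\boldsymbol{\mathrm{P}}$ of the third-order operator on the left of \eqref{hermite}, the remaining second-order factor having coefficients in $\Nu$ by division with remainder in $\Nu[\partial_x]$. The only real obstacle is the bookkeeping check that the three relations in \eqref{prop}, together with algebraic independence of $\Psi^+$ from $\Nu$, leave no room for further off-diagonal entries in the Galois matrix; this amounts to confirming that any automorphism preserving the product $R_2R_3=R_1^2$ and fixing $R_1$ must be diagonal in the chosen basis, which is a one-line verification once the symmetric-square viewpoint is adopted.
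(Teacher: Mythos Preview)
Your approach is correct and reaches the same conclusion as the paper, but by a genuinely different route. The paper argues \emph{intrinsically}: it writes down the algebraic and first-order differential relations satisfied by $R_1,R_2,R_3$ (namely $R_1=\boldsymbol{\mathrm{P}}$, $R_2R_3=\boldsymbol{\mathrm{P}}^2$, and the two exponential-type equations $(R_k)_x=\frac{\boldsymbol{\mathrm{P}}_{\!x}\pm2\mu}{\boldsymbol{\mathrm{P}}}\,R_k$), applies a general $\mathrm{GL}_3$-transformation, and solves for which matrices preserve all of them; the condition $\mu\ne0$ is what kills the off-diagonal entries. You instead argue \emph{functorially}: recognizing \eqref{hermite} as the second symmetric power of \eqref{1}, you transport the Galois group already computed in Theorem~\ref{T1} through the representation $\beta\mapsto\mathrm{Diag}(1,\beta^2,\beta^{-2})$. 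Your route is shorter and leans on work already done; the paper's is self-contained and makes visible exactly where the genericity hypothesis $\lambda\ne E_j$ is used. One point you pass over quickly deserves an explicit word: the claim that ``any automorphism of this smaller extension is induced by restriction of an automorphism of the bigger one'' is precisely the surjectivity of the restriction map in the Picard--Vessiot Galois correspondence, valid here because $\Nu\langle(\Psi^+)^2\rangle$ is itself a Picard--Vessiot extension of $\Nu$ and the ambient group $\mathbb{C}^*$ is abelian (so every closed subgroup is normal). For factorizability, the paper goes further and exhibits the full decomposition of the third-order operator into three first-order factors over $\Nu$, whereas you stop after splitting off the right factor $\partial_x-\boldsymbol{\mathrm{P}}_{\!x}/\boldsymbol{\mathrm{P}}$; either suffices for the stated claim.
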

\begin{proof}
As  in proof of Theorem~\ref{T1} we perform a linear transformation of
the basis $\{R_1,R_2,R_3\}$ and check invariance of the base
differential relations between $R$'s. Let us take relations of the zero
and 1st order in derivatives:
$$
R_1=\boldsymbol{\mathrm{P}}\,,\qquad
R_2R_3=\boldsymbol{\mathrm{P}}^2\,,\qquad
(R_2)_x=\frac{\boldsymbol{\mathrm{P}}_{\!\!x}+2\,\mu}
{\boldsymbol{\mathrm{P}}}\,R_2\,,
\qquad
(R_3)_x=\frac{\boldsymbol{\mathrm{P}}_{\!\!x}-2\,\mu}
{\boldsymbol{\mathrm{P}}}\,R_3\,,
$$
which result from properties \eqref{prop}. Using  condition $\mu\ne 0$,
one easily derives that admissible transformations are
$$
R_1\mapsto 1     \cdot R_1\,,\qquad
R_2\mapsto \alpha\cdot R_2\,,\qquad
R_3\mapsto \delta\cdot R_3
$$
and $R_2R_3\mapsto 1\cdot R_2R_3$. Hence $\delta=\alpha^{\smin1}$.
Solutions $\{R_1,R_2,R_3\}$ are in general not algebraic functions,
hence $\alpha$ is a free nonzero complex number and we do not need
further to analyze the remaining relations of second order in
derivatives of $R$'s (they will be automatically satisfied). This
yields a connectivity of the group and the matrix\footnote{All this can
also be seen from the fact that Galois group belongs to
$\mathrm{SL}_3(\mathbb{C})$ and transcendence degree of the extension
is unity.} $\mathrm{Diag}(1,\alpha,\alpha^{\smin1})$. Factorizations of
Eq.~\eqref{hermite} are deducible by use of Liouville's scheme since we
know solutions to equation. For example
\begin{alignat*}{2}
\partial_{\mathit{xxx}}-4\,(u+\lambda)\,\partial_x-2\,u_x&=
\mbig[7](\partial_x+\frac{\boldsymbol{\mathrm{P}}_{\!\!x}+2\,\mu}
{\boldsymbol{\mathrm{P}}} \mbig[7])
\partial_x\!\!
\mbig[7](\partial_x-\frac{\boldsymbol{\mathrm{P}}_{\!\!x}+2\,\mu}
{\boldsymbol{\mathrm{P}}} \mbig[7])\\[0.3em]
&=\mbig[7](\partial_x+\frac{\boldsymbol{\mathrm{P}}_{\!\!x}+2\,\mu}
{\boldsymbol{\mathrm{P}}}\mbig[7])\!\!
\mbig[7](\partial_x-\frac{2\,\mu}{\boldsymbol{\mathrm{P}}}
\mbig[7])\!\!
\mbig[7](\partial_x-\frac{\boldsymbol{\mathrm{P}}_{\!\!x}}
{\boldsymbol{\mathrm{P}}} \mbig[7]).
\end{alignat*}
These factorizations are not unique because equation has order 3.
\end{proof}

\subsection{Inversion of non-holomorphic integrals}
As a last counterexample we consider an equation that leads, on the one
hand, to a nonstandard case of the inversion problem, and, on the other
hand, to a nonlinear $x$-evolution in a theta-function argument. As we
shall see, there is no essential difference between (quadrature)
integrability schemes of this example and those of pure FG-potentials.
This example was already pointed out as non-standard in \cite{ustinov}.

\begin{example}
Let us consider the following spectral problem
\begin{equation}\label{dym}
\Psi_{\mathit{\!xx}}=\frac{\lambda}{v^2}\,\Psi\,,
\end{equation}
where $v=v(x)$. As long as we have deal with invariant integration of
\eqref{dym} (Sect.~\ref{3}), the theory has just non-essential
modifications and we restrict ourselves to writing down all of its
attributes in a form of references source for the simplest but
nontrivial case $g=1$. We put
\begin{equation}\label{tr}
R(x;\lambda)=v\,\lambda-\phi(x)\,,\qquad
\phi(x)\DEF 2\,a(x-b)(x-c)\,,
\end{equation}
where $a,b,c\in\mathbb{C}$. Novikov's equation is the equation
$v^3v_{\mathit{xxx}}-4\,\phi\,v_x+4\,\phi_xv=0$ and its integrals are
as follows:
\begin{equation}\label{II}
\mu^2=\lambda^3+3\,E_2(\dpotv)\,\lambda^2+
E_1(\dpotv)\,\lambda+a^2(b-c)^2 \,,
\end{equation}
$$
3\,E_2=-\frac{1}{2}v\,v_\mathit{xx}+
\frac{1}{4}\,\pow{v}{x}{2}-2\,\frac{\phi }{v}\,,\qquad E_1=
\frac12\,\phi \,v_\mathit{xx}- \frac12\,\phi_x\,v_x+\frac{\phi
^2}{v^2}+ 2\,a\,v\,.
$$
'Trace formula' follows from a direct analogy of \eqref{factor}, \ie,
we set $R=(\lambda-\gamma)\,v$, but inversion problem becomes an
inversion of the logarithmic integral rather than Jacobi problem
\eqref{jac}. Indeed, manipulations with  integrals $E_1$, $E_2$ show
that
\begin{equation}\label{jj}
\int\limits^{\;\gamma}\!\!\frac{1}{z-E_2}
\frac{dz}{\sqrt{4\,z^3-g_2^{}z-g_3^{}}} =\frac{1}{2\,a(b-c)}\,\ln
\frac{x-b}{x-c}+D\,,
\end{equation}
where $g_2^{}=12\,\pow{E}{2}{2}-4\,E_1$ and
$g_3^{}=4\,E_1E_2-8\,\pow{E}{2}{3}-a^2(b-c)^2$.

From \eqref{jj} it follows that the $\theta$-function description
undergoes changes since $x$-evolution on jacobian is essentially
non-linear. We pass from parameter $E_2$ to $\varrho$ by the rule
$\wp(2\varrho)=E_2$ and represent the logarithmic integral \eqref{jj}
in terms of $\theta$-functions of the holomorphic one $\boldsymbol{r}$:
$\gamma=\wp(2\,\boldsymbol{r})$. We arrive at  a transcendental
equation determining function $\boldsymbol{r}=\boldsymbol{r}(x)$:
$$
\ln\frac{\theta_1(\boldsymbol{r}-\varrho)}
{\theta_1(\boldsymbol{r}+\varrho)}+
2\,\frac{\Dtheta(\varrho)}{\theta_1(\varrho)}\,\boldsymbol{r}=
\frac{\wpp(2\,\varrho)}{a\,(b-c)}\ln\frac{x-b}{x-c}+D\,.
$$
As a result we obtain that ultimate solution to the $\Psi$-function is
far from obvious:
$$
\Psi_{\mathit{\!xx}}=\frac{\lambda\,\wp^2(2\,\boldsymbol{r})}
{a^2(x-b)^2(x-c)^2}\,
\Psi\,,
\qquad
\Psi^\pm(x;\lambda)=\frac{\sqrt{(x-b)(x-c)}}
{\sqrt{\wp(2\,\boldsymbol{r})}}\cdot\!
\frac{\Lambda(\boldsymbol{r};\pm\mathfrak{u})}
{\theta_1(\boldsymbol{r})}\,,
$$
$$
\lambda=\wp(2\,\mathfrak{u})-\wp(2\,\varrho)\,,\qquad
h=2\,\eta\,\mathfrak{u}^2\,.
$$
\end{example}

A direct check of this solution is a good exercise in theta-calculus.
Moreover, nonlinearity of this $x$-evolution is two-fold. Logarithm
\eqref{jj} contains a fraction-linear function but the principal
nonlinearity comes from a transcendental nonlinearity of
$\boldsymbol{r}(x)$. It never becomes linear even though we replace the
general case in \eqref{tr}, that is $\phi(x)=2\,a(x-b)(x-c)$, with the
particular one $\phi(x)=\mathrm{const}$.

\section{Concluding remarks\label{10}}
The properties of $\theta$-functions outlined above differ in a crucial
respect from classical special functions since the latter ones are
defined by ODEs not integrable in quadratures over elementary or
algebraic functions. For example Bessel's functions or the Painlev\'e
transcendents. Therefore when generalizing rational theory (solitons),
not only do algebraic integrability takes place for Novikov's equations
but it also takes place  for linear spectral equations and even
$\theta$-functions. In all these cases the integration procedure has
been closed at a single and common step: adjunction of the inversion
operation $\Xi$. The elementary theory does not get by without
inversion as well:
$$
\int\!\!\text{rational functions}
\;\;\dashrightarrow\;\;\ln\;\;\dashrightarrow\;\;\text{inversion}\;\;
\dashrightarrow\;\;
\text{exponent}\;\;\dashrightarrow\;\;\text{solitons}\,.
$$

It should be also emphasized that it makes no difference whether 1st
kind integrals (Jacobi problem) or 2nd, 3rd kind ones have been
inverted (see, \eg, \eqref{jj}). The only thing is needed for the
(Liouvillian) algebraic integrability: inversion of  indefinite
integrals of any algebraic functions. Roughly speaking, the inversion
procedure appearing in `theta-methods' has also the Liouvillian
characterization because, according to important Eq.~\eqref{AB},
adjunction of any kind Abelian integrals above is allowed. Nontrivial
examples on inversion of meromorphic integrals can be found in
monograph \cite{GH}; they are associated with the Camassa--Holm
hierarchy and have also the theta-function description. In the same
place quite extensive bibliography is presented. In other words,
\textit{in regard to invariant integrability, the choice of the
$\Theta$-series or the inversion operation $\Xi$ is just a question of
nomenclature}. Introducing $\Theta$ is equivalent to removing
$\gamma$'s from formulae like \eqref{dubrovin}--\eqref{final} and
conversely. As for analytic representation of solutions, the
$\Theta$-series is of course the fundamental object.

%The equations \eqref{F} and  \eqref{necanon} would be quite sufficient
%in order to obtain representations for $\theta$ in form of various
%series but for the completeness one should answer to the question on
%a nature and origin of appearance of the \textit{canonical Fourier
%series} \eqref{series}.
%%It is obvious that the \textit{postulated}
%%form of the $\Theta$-series does not explain but formalizes the
%%scheme of integration. Nevertheless the series \eqref{series} itself
%%(its general term) can be derived sequentially.
%This point is quite important not only for completion of the
%quadrature conception but also in its own right. Since it requires
%more extensive use of Abelian transcendents this subject will be
%presented elsewhere.

\end{document}